\crefname{equation}{}{} 
\crefname{enumi}{}{} 
\newtheorem{example}{Example}[section]
\newtheorem{theorem}{Theorem}[section]
\newtheorem{lemma}{Lemma}[section]
\newtheorem{proposition}{Proposition}[section]
\newtheorem*{maintheorem*}{Main Theorem}
\numberwithin{equation}{section}
\newcommand{\N}{\mathbb{N}}
\newcommand{\R}{\mathbb{R}}
\newcommand{\abs}[1]{\left|#1\right|}
\newcommand{\norm}[1]{\left\|#1\right\|}
\newcommand{\eps}{\varepsilon}
\DeclareMathOperator*{\supp}{supp}
\newcommand{\dd}{\,\mathrm{d}}
\renewcommand{\d}{\mathrm{d}}
\newcommand{\Ds}{\left(-\Delta\right)^s }
\newcommand{\Dusm}{\left(-\Delta\right)^{\frac{1+s}{2}} }
\renewcommand{\rm}{\mathrm}
\renewcommand{\div}{\operatorname{div}}
\begin{document}

\title[Existence of solutions for a fractional thin-film equation]{Existence and finite speed of propagation of solutions for a
multidimensional fractional thin-film equation}

\subjclass[2020]{35R11, 35R09, 26A33.}
\keywords{Fractional Laplacian; hydraulic fractures; non-local thin-film equation; higher-order degenerate parabolic equations;
existence; finite speed of propagations; waiting time phenomenon.}

\author[N.~De Nitti]{Nicola De Nitti}
\address[N.~De Nitti]{Università di Pisa, Dipartimento di Matematica, Largo Bruno Pontecorvo 5, 56127 Pisa, Italy.}
\email[]{nicola.denitti@unipi.it}

\author[S.~Lisini]{Stefano Lisini}
\address[S.~Lisini]{Università degli Studi di Pavia, Dipartimento di Matematica Felice Casorati, Via Ferrata 5, 27100 Pavia, Italy.}
\email[]{stefano.lisini@unipv.it}

\author[A.~Segatti]{Antonio Segatti}
\address[A.~Segatti]{Università degli Studi di Pavia, Dipartimento di Matematica Felice Casorati, Via Ferrata 5, 27100 Pavia, Italy.}
\email[]{antonio.segatti@unipv.it}

\author[R.~Taranets]{Roman Taranets}
\address[R.~Taranets]{Institute of Applied Mathematics and Mechanics of the NAS of Ukraine,
G.~Batiuka Str.~19, 84116 Sloviansk, Ukraine.}
\email[]{taranets\_r@yahoo.com}

\begin{abstract}
In this paper, we discuss existence and finite speed of propagation for the solutions to an initial-boundary value problem for
a family of fractional thin-film equations
in a bounded domain in $\R^d$. The nonlocal operator we consider is the spectral fractional Laplacian with Neumann boundary conditions. In the case of a ``strong slippage'' regime with ``complete wetting'' interfacial conditions, we prove local entropy estimates that entail finite speed of propagation of the support and a lower bound for the waiting time phenomenon.
\end{abstract}

\maketitle

\section{Introduction}
\label{sec:intro}

We study the existence and asymptotic behavior of 
solutions to the initial boundary value problem (IBVP)
\begin{equation}\label{eq:ft}
	\begin{cases}
	\partial_t u(x,t)= \div(u^n(x,t)\nabla p(x,t)), & (x,t) \in  \Omega_T ,\\
	p(x,t)= \Ds u(x,t), &  (x,t) \in \Omega_T,\\
	\nabla u \cdot \textbf{n} =   \nabla p \cdot \textbf{n} = 0 , &  (x,t) \in \partial \Omega \times (0,T),\\
	u(x,0) =u_0(x), & x \in \Omega,
\end{cases}
\end{equation}
where  $\Omega \subset \R^d $ is a bounded domain with smooth boundary $\partial \Omega$,  $\textbf{n}$ is the exterior normal vector to
$\partial\Omega$, $\Omega_T \coloneqq  \Omega \times (0,T)$,
 $d \in \N$, $n > 0$, and $s\in(0,1)$. The pressure $p$ is related to the unknown function $u$ via the \emph{spectral fractional Laplacian} (with Neumann boundary conditions), which is defined as
\begin{equation}\label{eq:fl}
(-\Delta)^s u(x) \coloneqq   \mathop {\sum}_{k =0}^{+\infty} {  \lambda_k^{s} (u,\phi_k) \phi_k(x)},
\end{equation}
where $\lambda_k$ and $\phi_k$ are the eigenvalues and the normalized eigenfunctions of the Laplacian on $\Omega$ with the homogeneous Neumann boundary conditions, $(u,v)$ denotes the scalar product in $L^2(\Omega)$. We refer to \cite{StingaTorrea,AbatangeloValdinoci} for further information on this operator.

The PDE in \cref{eq:ft} is a nonlocal degenerate parabolic-type equation of order $2(s + 1)$ that arises in modeling hydraulic fractures: the parameter $s \in (0,1)$ is related to the properties of the medium in which the fracture spreads, while the power
$n > 0$ corresponds to different slip conditions on the fluid–solid interface.

It is worthwhile noting that \cref{eq:ft} interpolates between two well-known equations, namely the {\itshape porous medium equation} (PME) (when, formally, $s=0$) and the
{\itshape thin film equation} (TFE) (when, formally, $s=1$).

We refer to the books \cite{VazquezBook,VazquezBook2} for the wide literature regarding the porous medium equations. For the TFE ($s=1$), the case $n \in (1,2)$ corresponds to ``strong slippage'';  $n \in (2,3)$ to ``weak slippage'';  $n = 3$  to a ``no-slip condition'' \cite{RevModPhys.69.931};  $n =2$ to the ``Navier-slip condition'' \cite{JAGER200196}; the case  $n = 1$  arises as the lubrication approximation of the Hele--Shaw flow \cite{GO}.

The properties of solutions to this class of equations strongly depend on the value of the parameter $n >0$. The mathematical study of the simplest one-dimensional model  \cref{eq:ft}, with $s=1$, was initiated by Bernis and Friedman in \cite{B8}. They derived a
positivity property and proved the existence of non-negative weak generalized
solutions of initial-boundary problem for non-negative initial data.
More regular (strong or entropy) solutions have been constructed, for $d = 1$, in
\cite{BerettaBertschDalPasso,BP1996}; for $d\in\{2,3\}$, in \cite{DPGG98SIAM}; and, for $d \geqslant  4$, in \cite{Li9}.

Many interesting qualitative properties of the solutions have been discovered and investigated. The finite speed
of propagation (FSP) of the support of the solution was established,  for  $d =1$ and $0< n < 2$ in \cite{BernisFinite} or $2 \leqslant   n < 3$ in \cite{BernisFinite2}, or $n \geqslant  4 $ in \cite{B8}; for $d\in\{2,3\}$ and $\frac{1}{8}< n < 2$ in \cite{ThinViscous} or $2 \leqslant   n < 3$ in \cite{Grun2003}. The finite \emph{backward} speed of propagation property for $d = 1$ and $\frac{1}{2} < n < \frac{3}{2}$
\cite{BernisFinite}.

Another subtle issue is the  \emph{waiting-time phenomenon} (WTP): proving that, if the initial data  are ``flat enough'' near some point $x_0$ on the initial free boundary, the free boundary locally remains stationary (or at most move backward) for some time
before it finally starts moving forward.

Sufficient conditions for the WTP were obtained, for $d=1$ and $0 < n < 3$ and for $d \in\{2,3\}$ and $\frac{1}{8} < n < 2$ in \cite{DalPassoGiacomelliGruen} or for $ 2 \leqslant   n < 3$ in \cite{GruenWTWS}. On the other hand, necessary conditions for instantaneous forward motion of the free boundary were first deduced in \cite{FischerAHP,FischerARMA,FischerJDE}. 
Finally, sharp criteria in terms of the mass of the initial data were proven in \cite{MR4444309}.

The uniqueness of the solutions to \cref{eq:ft} with $s =1$  is an open and challenging problem.
Only partial results are available: see \cite{John,MajdoubMasmoudiTayachi,MR2944624}.

The case $d=1$ and $s = \frac{1}{2}$ is considered in \cite[Section 2]{IM11} as a model describing hydraulic fractures driven by a viscous fluid in a uniform elastic
medium under the condition of plane strain. A physical discussion about an impermeable KGD (Khristianovic--Geertsma--de Klerk) fracture model is also contained in \cite{khristianovic1955,geertsma1969}.

 The IBVP \cref{eq:ft} with $s=\frac{1}{2}$ and $n=3$ was derived in \cite{SpenceSharp85} to describe the cracking in an elastic medium with ``no-slip regime''; the model was later generalized for the other slip regimes, i.\,e. $n \geqslant  1$, in \cite{IM11}.
Moreover, as shown in \cite{IM11,MR3397309}, the value $n = 4$ is critical for \cref{eq:ft} (while
the value $n = 3$ is known to be critical for the TFE). For $s = \frac{1}{2}$ and $n\in [1,4)$, in \cite{MR3406645}, Imbert and Mellet were also able to construct self-similar solutions.
In \cite{IM11}, the authors proved the existence of a non-negative weak solution to \cref{eq:ft} for $d =1$,
$s = \frac{1}{2}$ and $n \geqslant  1$. This result was generalized for $d =1$, $s \in (0,1)$
and $n \geqslant  1$ in \cite{MR3397309}.

In contrast to the one-dimensional situation, the multi-dimensional case is less understood.

For the particular case $n=1$ and $d \geqslant  1$, in \cite{Se_Va}  the authors proved the existence of non-negative solutions of the  Cauchy problem for \cref{eq:ft},
where $(-\Delta)^s$ denotes the (singular integral) fractional Laplacian of order $s \in (0,1)$ in $\R^d$ (see \cite{MR3613319,AbatangeloValdinoci}).

They also constructed explicit compactly supported and non-negative self-similar solutions that match the classical Barenblatt profile for the porous medium equation when $s=0$ (see \cite{MR0046217}) and  the Barenblatt-type profile
obtained for zero contact-angle solutions of the TFE in \cite{MR1148286} (for $d=1$) and in \cite{MR1479525} (for $d \geqslant  1$).

These self-similar solutions are then shown to be related to the long-time dynamics under some extra integrability assumption.
The self-similar solutions are constructed among the ones with connected positivity set. Moreover, they are unique in this class.
The uniqueness of the self-similar solutions {\itshape without} assuming that their positivity set is connected has been recently discussed in
\cite{De-Va23}. This uniqueness result is indeed a consequence of the fact, proved in \cite{De-Va23}, that some nonlocal seminorms are strictly decreasing under the continuous Steiner rearrangement.
For our equation (we recall, equation \cref{eq:ft} in $\R^d$ with $n=1$) the nonlocal seminar involved is the $H^s(\R^d)$-seminorm, $s\in (0,1)$.
The reason why the $H^s$-seminorm has such a prominent role is that equation \cref{eq:ft} in $\R^d$ with $n=1$ is
indeed the Wasserstein gradient flow of the $H^s$-seminorm, as recently proved in \cite{Lisini24}.
For \cref{eq:ft}, with $n \geqslant 1$ and $d=1$, self-similar solutions were constructed in \cite{MR3406645}.

In general, proving whether compactly
 supported initial data generate compactly supported solutions and studying the WTP are difficult questions when dealing with \cref{eq:ft}: the main difficulty lies in the fact that the equation is formally
of order $2(s+1)$ and thus comparison arguments are not available. The first results on these questions were obtained in \cite{NT2024}, where  the authors proved  FSP and WTP for weak solutions to \cref{eq:ft} with $d=1$, $s \in (0,1)$, and $n \in (1,2)$.

The aim of this paper is twofold. First, we generalize the existence result \cite{IM11,MR3397309} dealing
with the case of a bounded domain in $\R^d$. Then, we prove, in the multi-dimensional case, the FSP and the WTP, thus extending the contribution of \cite{NT2024}.

\subsection{Outline of the paper}

Let us outline the results of this paper.
In \cref{PSP}, we review some preliminary results on the fractional
Laplacian operator and fractional Sobolev spaces.

In \cref{sec:main}, we formulate our main results on the existence of weak solutions, finite speed of propagation  and waiting time phenomenon.

In the existence result, \cref{Th-ex}, we require that $ n\in \left[1,\frac{d+2(1-s)}{(d-2s)_{+}}\right)$ if $s\in (0,1)$.
We stress that the upper bound $\frac{d+2(1-s)}{(d-2s)_{+}}$ on the exponent $n$ does not coincide with the optimal one for the TFE as $s \nearrow 1$ (i.\,e., it is not optimal). This is associated with the admissible regularity of the flux $u^n \nabla p(u)$ and with the lack, for the moment, of the so-called $\alpha$-entropy estimates. We expect that these estimates could be crucial to obtain the optimal result.

In the results on the interface evolution properties, \cref{th:speed,th:wt}, the restriction $n < \frac{s+2}{s+1}$ for $d \in \{2,3\}$ arises in the proof of a local entropy estimate (\cref{lemma-lee}), as the weak solution is not uniformly bounded from above.
Again, we hope that this problem can be eliminated with the proper $\alpha$-entropy estimates, as in the case of the TFE (see \cite{DPGG98SIAM}).

In \cref{th:speed}, we obtain an interface propagation rate, $d(t) \lesssim t^{\frac{1}{n d + 2(s+1)}}$, which matches that of the self-similar solutions in \cite{MR3406645,Se_Va}.

The sufficient condition on the initial data
for the WTP deduced in \cref{th:wt}, which require a growth up to $| x - x_0|^{\frac{2(s+1)}{n}} $,  appears sharp up to scaling. For example, for $d=1$, there exists the explicit waiting time solution $(1 + c\,t)^{-\frac{1}{n}} x^{\frac{2(s+1)}{n}}$ of \cref{eq:ft}, where $c > 0$ is some constant depending on $n$ and $s$ (cf. \cite[Remark 3.2]{NT2024}).

\cref{sec:existence} is devoted the proof of  \cref{Th-ex}.
In contrast to \cite{IM11,MR3397309,Se_Va,Lisini24}, we use the approach developed
in the theory of multi-dimensional thin film equations (see, e.\,g.,
\cite{Grun95,DPGG98SIAM,DalPassoGiacomelliShishkov,TK14,TS04} etc.). 
In particular, we introduce a nested approximation scheme based on the a proper regularization of the mobility $u^n$ close to zero and at infinity. 
The approximated problem is then (locally) solved using the Faedo-Galerkin scheme. 
Thanks to entropy and energy estimates and compactness, we construct a global in time weak solution.

In \cref{LEE} we obtain a local entropy estimate (\cref{lemma-lee}). This is the main tool in the proof of FSP and WTP.
In \cite{NT2024}, the authors proved this estimate in the one dimensional case and  for all $n \in (1,2)$ by relying on the fact that weak solutions
satisfy $u \in L^4((0,T); L^{\infty}(\Omega))$. Unfortunately, in the multi-dimensional case we only have that weak solutions are in $L^2((0,T); L^{\infty}(\Omega))$. This regularity is not
enough to control of some terms in the local entropy inequality when $n \in [\frac{s+2}{s+1},2) $. This explain why we impose that $n\in [1, \frac{s+2}{s+1})$.

In \cref{sec:FSP} and in \cref{sec:LWT}, we prove the FSP and the WTP, respectively.
The main line of our proof is inspired by the approach of
\cite{DalPassoGiacomelliGruen,GruenWTWS, GiacomelliGruen,MR1642176,CT12,MR2073864,MR2265292,MR3989405}. 
The proof of the finite speed of propagation is energetic in the sense that we prove various energy estimates that allow for the use of a Stampacchia-type iteration lemma. However, due to the nonlocal nature of the operator $(-\Delta)^s$, we cannot apply this strategy directly (indeed, some extra term appears in the local entropy estimate). 
Therefore, a suitable argument by contradiction is needed. 

In \cref{app:lemmas}, we collect some technical results used in the proofs, including the
statements of the above-mentioned Stampacchia's lemmas, a modified Gagliardo--Nirenberg interpolation inequality, and some estimates on the tail-behavior of the fractional Laplacian.

\section{Fractional Sobolev norms and the  fractional Laplacian}\label{PSP}

In this Section, we recall and state the results on Sobolev fractional norms and the spectral fractional Laplacian used in the paper.

Let $\Omega\subset \R^d$ be a bounded domain with smooth boundary.
Let $\lambda_k,\phi_k$ for $k=0,1,2,\ldots$, the sequence of the eigenvalues and of the normalized eigenfunctions of the Laplacian on $\Omega$
with homogeneous Neumann boundary conditions,
hence the unique solutions of the problems
\begin{equation}\label{eq:EVN}
\begin{cases}
- \Delta \phi_k = \lambda_k \phi_k  & \text{ in }   \Omega ,\\
\nabla \phi_k \cdot \textbf{n} = 0
 &  \text{ on } \partial\Omega,\\
 \|\phi_k\|_{L^2(\Omega)}=1.
\end{cases}
\end{equation}

We recall that $\lambda_0=0$ and $\phi_0=1/\sqrt{|\Omega|}$.
Moreover
\begin{equation}\label{EVlim}
	\lambda_0=0<\lambda_1\leqslant   \lambda_2\leqslant   \ldots, \qquad \lim_{n\to+\infty}\lambda_n=+\infty.
\end{equation}

For $u,v\in L^2(\Omega)$ we denote by $(u,v)=\int_\Omega u(x)v(x)\,\d x$ the scalar product in $L^2(\Omega)$.
We recall that $\{\phi_k\}_{k\geqslant  0}$ is an Hilbertian basis of $L^2(\Omega)$.
Given $u\in L^2(\Omega)$ we denote by
$$c_k(u)\coloneqq (u,\phi_k)$$ its Fourier coefficients.
We recall that Parseval's identity holds:
\begin{equation}\label{Parseval}
	\|u\|^2_{L^2(\Omega)}=\sum_{k=0}^{+\infty} |c_k(u)|^2, \qquad (u,v)=\sum_{k=0}^{+\infty} c_k(u)c_k(v).
\end{equation}
For $r\in [0,+\infty)$ we define the homogenous $\dot H_N^r(\Omega)$ seminorm by
\begin{equation}\label{defHr}
	\|u\|^2_{\dot H_N^r(\Omega)}\coloneqq  \mathop{\sum}_{k =0}^{+\infty} {\lambda_k^{r} |c_k(u)|^2}
\end{equation}
and the $H_N^r(\Omega)$ norm by
\begin{equation}
	\|u\|^2_{H_N^r(\Omega)}\coloneqq  \|u\|^2_{L^2(\Omega)} + \|u\|^2_{\dot H_N^r(\Omega)}.
\end{equation}
The Sobolev space $H_N^r(\Omega)$ is defined by
$$
H_N^r(\Omega)\coloneqq  \{u\in L^2(\Omega):\,  \|u\|^2_{\dot H_N^r(\Omega)}<+\infty\}.
$$
Notice that $\|u\|^2_{\dot H_N^0(\Omega)}= \|u\|^2_{L^2(\Omega)}$.
The space $H_N^r(\Omega)$ coincides with the classical fractional Sobolev space $H^r(\Omega)$ for $r\in (0,3/2)$. In the case $r\in (3/2,7/2)$, we have $H_N^r(\Omega)\coloneqq  \{u\in H^r(\Omega): \nabla u \cdot \textbf{n} = 0 \text{ on } \partial\Omega\}$. In any case, we have equality of the norms $\|u\|^2_{H_N^r(\Omega)}=\|u\|^2_{H^r(\Omega)}$ for any $u\in H_N^r(\Omega)$.
Moreover, using \cref{EVlim}, it is not difficult to show that
\begin{equation}\label{eq:equiv}
	\|u\|^2_{L^2(\Omega)} + \|u\|^2_{\dot H_N^r(\Omega)} \quad \text{is equivalent to}\quad \left(\int_\Omega u(x)\,\d x\right)^2 + \|u\|^2_{\dot H_N^r(\Omega)}.
\end{equation}

Using \cref{EVlim} it is immediate to prove that $ H^{r_1}_N(\Omega)\subset H^{r_2}_N(\Omega)$ if $r_1>r_2$. Moreover, the following embeddings hold (see \cite{MR2944369}).

\begin{proposition}
Let $r\in [0,+\infty)$. 
\begin{itemize}
	\item If $r<d/2$, then there exists a constant $C$ such that
	\begin{equation}\label{emb1}
 		\| u \|_{L^{2d/(d-2r)}(\Omega)}\leqslant   C  \| u \|_{H^r(\Omega)} \qquad\text{for all } u\in H^r(\Omega).
	\end{equation}
	\item If $r=d/2$ and $p\in (1,+\infty)$, then there exists a constant $C_p$ such that
	\begin{equation}\label{emb2}
 		\| u \|_{L^p(\Omega)}\leqslant   C_p  \| u \|_{H^r(\Omega)} \qquad\text{for all } u\in H^r(\Omega).
	\end{equation}
	\item If $r>d/2$ and $r-d/2\not\in\N$, then there exists a constant $C$ such that
	\begin{equation}\label{emb4}
 	\|u\|_{C^{r-d/2}(\Omega)} \leqslant   C  \| u \|_{H^r(\Omega)} \qquad\text{for all } u\in H^r(\Omega).
	\end{equation}
In particular, 
	\begin{equation}\label{emb3}
 	\| u \|_{L^\infty(\Omega)}\leqslant   C  \| u \|_{H^r(\Omega)} \qquad\text{for all } u\in H^r(\Omega).
	\end{equation}
\end{itemize}
\end{proposition}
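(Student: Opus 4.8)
The plan is to reduce each of the three inequalities to its counterpart on the whole space $\R^d$ by means of an extension operator. Since $\Omega$ is a bounded domain with smooth boundary it is, in particular, a Lipschitz domain, so for every $r\in[0,+\infty)$ there is a bounded linear extension operator $E\colon H^r(\Omega)\to H^r(\R^d)$ with $\|Eu\|_{H^r(\R^d)}\le C_r\|u\|_{H^r(\Omega)}$. It therefore suffices to establish the three embeddings with $\Omega$ replaced by $\R^d$ and then restrict: for any of the target spaces $X$ (namely $L^{2d/(d-2r)}$, $L^p$, or $C^{r-d/2}$) one has $\|u\|_{X(\Omega)}=\|(Eu)|_\Omega\|_{X(\Omega)}\le\|Eu\|_{X(\R^d)}\le C\,\|Eu\|_{H^r(\R^d)}\le C'\,\|u\|_{H^r(\Omega)}$.

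First I would treat \cref{emb1}: the scaling-critical Sobolev embedding $H^r(\R^d)\hookrightarrow L^{2d/(d-2r)}(\R^d)$ for $0\le r<d/2$ is classical (for instance via the Hardy--Littlewood--Sobolev inequality applied to $u=(\mathrm{Id}-\Delta)^{-r/2}\bigl((\mathrm{Id}-\Delta)^{r/2}u\bigr)$, or by the fractional Gagliardo--Nirenberg inequality); composing with $E$ and restricting gives \cref{emb1}. For \cref{emb2}, at the borderline $r=d/2$ the embedding into $L^\infty$ fails, but $H^{d/2}(\R^d)\hookrightarrow L^q(\R^d)$ holds for every $q\in[2,+\infty)$ (again by Hardy--Littlewood--Sobolev, or by interpolating the subcritical embeddings); composing with $E$ and restricting yields $H^{d/2}(\Omega)\hookrightarrow L^q(\Omega)$ for $q\in[2,+\infty)$, and since $|\Omega|<+\infty$ we have $L^2(\Omega)\hookrightarrow L^p(\Omega)$ for $p\in(1,2)$, which covers the remaining exponents. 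For \cref{emb4}, the subcritical Morrey embedding $H^r(\R^d)\hookrightarrow C^{r-d/2}(\R^d)$ for $r>d/2$ with $r-d/2\notin\N$ (interpreted in the usual Hölder--Zygmund scale) is again classical; composition with $E$ and restriction give \cref{emb4}, and \cref{emb3} then follows immediately from the trivial inclusion $C^{r-d/2}(\Omega)\hookrightarrow L^\infty(\Omega)$.

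The argument is entirely routine once the extension operator is in hand; the only points requiring care are the two borderline situations, and I do not expect any genuine obstacle. For $r=d/2$ the embedding into $L^\infty$ (indeed into any Hölder class) genuinely fails, so one must settle for $L^p$ with $p<\infty$ and then recover the small exponents $p\in(1,2)$ from finiteness of $|\Omega|$; and in \cref{emb4}--\cref{emb3} the hypothesis $r-d/2\notin\N$ is needed precisely to avoid the complementary borderline case, in which $H^r$ embeds only into a log-Lipschitz/BMO-type space rather than into a Hölder space. No estimate beyond the standard Sobolev, Hardy--Littlewood--Sobolev, and Morrey inequalities on $\R^d$, together with the existence of the $H^r$-extension operator for the smooth bounded domain $\Omega$, is involved.
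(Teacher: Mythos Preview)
The paper does not actually include a proof of this proposition: it is stated as a standard fact and the text moves directly to the next proposition. Your sketch via a Sobolev extension operator and the classical embeddings on $\R^d$ is a correct and entirely standard way to justify it, so there is nothing to compare against and nothing to fix.
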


The following interpolation inequality follows from the definition of $H^r(\Omega)$-norm and H\"{o}lder's inequality.
\begin{proposition}\label{prop:interp}
The following interpolation of the semi-norms hold.
If $r_0, r, r_1\in [0,+\infty)$, $r_0 \leqslant   r \leqslant   r_1$ and $u\in H^{r_1}_N(\Omega)$, then
\begin{equation}\label{interpsemi}
 	\| u \|_{\dot H^r_N(\Omega)}\leqslant   \| u \|^{1-\theta}_{\dot H^{r_0}_N(\Omega)}\| u \|^\theta_{\dot H^{r_1}_N(\Omega)},
\end{equation}
where $\theta \coloneqq \frac{r-r_0}{r_1-r_0}$.
\end{proposition}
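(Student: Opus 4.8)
The statement to prove is Proposition~\ref{prop:interp}, the interpolation inequality for the homogeneous $\dot H^r_N$ seminorms:
\[
\| u \|_{\dot H^r_N(\Omega)}\leqslant \| u \|^{1-\theta}_{\dot H^{r_0}_N(\Omega)}\| u \|^\theta_{\dot H^{r_1}_N(\Omega)}, \qquad \theta = \frac{r-r_0}{r_1-r_0}.
\]

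The plan is to work directly with the spectral definition \eqref{defHr} and reduce everything to H\"older's inequality applied term by term to the Fourier coefficients. First I would write, using $r = (1-\theta) r_0 + \theta r_1$ (which is exactly the content of the identity $\theta = \frac{r-r_0}{r_1-r_0}$), the elementary factorization of the eigenvalue weights:
\[
\lambda_k^{r} = \lambda_k^{(1-\theta) r_0}\,\lambda_k^{\theta r_1} = \bigl(\lambda_k^{r_0}\bigr)^{1-\theta}\bigl(\lambda_k^{r_1}\bigr)^{\theta},
\]
valid for every $k$ (with the usual convention $0^0 = 1$ handling the $k=0$ term, where $\lambda_0 = 0$; in fact that term contributes $0$ on both sides whenever $r>0$, and when $r=0$ one has $r_0=r=0$ and the inequality is trivial, so we may assume $0 \le r_0 < r_1$ and treat $r_0=0$ separately if needed, or simply note $\lambda_0^{r}|c_0(u)|^2 = 0$ unless $r_0 = 0$).

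Next I would apply H\"older's inequality for series with exponents $p = \frac{1}{1-\theta}$ and $q = \frac{1}{\theta}$ (so that $\frac1p + \frac1q = 1$, using $\theta \in [0,1]$; the endpoint cases $\theta \in \{0,1\}$ are immediate) to the sum
\[
\| u \|^2_{\dot H^r_N(\Omega)} = \sum_{k=0}^{+\infty} \bigl(\lambda_k^{r_0} |c_k(u)|^2\bigr)^{1-\theta}\bigl(\lambda_k^{r_1} |c_k(u)|^2\bigr)^{\theta}
\le \Bigl(\sum_{k=0}^{+\infty} \lambda_k^{r_0} |c_k(u)|^2\Bigr)^{1-\theta}\Bigl(\sum_{k=0}^{+\infty} \lambda_k^{r_1} |c_k(u)|^2\Bigr)^{\theta},
\]
which is precisely $\| u \|^{2(1-\theta)}_{\dot H^{r_0}_N(\Omega)}\,\| u \|^{2\theta}_{\dot H^{r_1}_N(\Omega)}$. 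Taking square roots gives the claim. The hypothesis $u \in H^{r_1}_N(\Omega)$ guarantees the right-hand side is finite (and by the nesting $H^{r_1}_N \subset H^{r_0}_N$ noted just before the proposition, also the $\dot H^{r_0}_N$ factor is finite), so no convergence issues arise.

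I do not anticipate a genuine obstacle here: the only points requiring a word of care are the bookkeeping of the $k=0$ term (where $\lambda_0 = 0$) and the degenerate cases $\theta = 0$ or $\theta = 1$ (i.e. $r = r_0$ or $r = r_1$), all of which are trivial. The substantive content is entirely the splitting of the exponent $r$ as a convex combination of $r_0$ and $r_1$ followed by H\"older; everything else is routine.
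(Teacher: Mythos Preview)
Your proof is correct and is exactly the standard argument. The paper does not actually supply a proof of this proposition---it is stated as a known fact---so there is nothing to compare against; your H\"older-on-the-spectral-coefficients approach is precisely the expected one.
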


\begin{proof}
The key observation is that applying H\"older's inequality  with $p = \frac{r_0}{r \nu }$ and $ q = \frac{r_1}{r (1-\nu ) }$ yields 
\[
\sum_{k=0}^{\infty}  \lambda_k^r \abs{c_k(u)}^2  = \sum_{k=0}^{\infty}  \lambda_k^{r \nu}  \lambda_k^{r (1-\nu) }   \abs{c_k(u)}^2 
\leqslant 
\left(\sum_{k=0}^{\infty}  \lambda_k^{r_0} \abs{c_k(u)}^2 \right)^{1-\theta}  \left(\sum_{k=0}^{\infty}  \lambda_k^{r_1} \abs{c_k(u)}^2 \right)^{\theta},
\]
where  $\nu \coloneqq \frac{ r_0 (r_1 - r) }{r (r_1 - r_0) }$.
\end{proof}

For $r\in [0,+\infty)$ and $u\in H^{2r}_N(\Omega)$ we define the $r$-Laplacian operator by
\begin{equation}\label{rlap}
(-\Delta)^r u \coloneqq   \mathop {\sum}_{k =0}^{+\infty} {  \lambda_k^{r} c_k(u) \phi_k}.
\end{equation}

For the fractional Laplacian the following identity for its $L^2(\Omega)$ norm holds.
\begin{proposition}\label{prop:2r}
If $u\in H^{2r}_N(\Omega)$, then
\begin{equation}\label{charL2}
 	\| (-\Delta)^r u \|^2_{L^2(\Omega)} = \| u \|^2_{\dot H^{2r}_N(\Omega)}.
\end{equation}
\end{proposition}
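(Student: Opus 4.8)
The plan is to reduce the whole statement to Parseval's identity \cref{Parseval} written in the Hilbertian basis $\{\phi_k\}_{k\geqslant 0}$. First I would record that the hypothesis $u\in H^{2r}_N(\Omega)$ means exactly $\sum_{k=0}^{+\infty}\lambda_k^{2r}\abs{c_k(u)}^2<+\infty$, so the partial sums $S_M\coloneqq\sum_{k=0}^{M}\lambda_k^{r}c_k(u)\phi_k$ form a Cauchy sequence in $L^2(\Omega)$: by orthonormality of the $\phi_k$ one has $\norm{S_M-S_{M'}}_{L^2(\Omega)}^2=\sum_{k=M'+1}^{M}\lambda_k^{2r}\abs{c_k(u)}^2$, which is a tail of a convergent series. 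Hence the series defining $(-\Delta)^r u$ in \cref{rlap} converges in $L^2(\Omega)$ and $(-\Delta)^r u$ is a well-defined element of $L^2(\Omega)$.

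Next I would identify the Fourier coefficients of $(-\Delta)^r u$. Using the continuity of the scalar product with respect to $L^2$-convergence together with orthonormality of $\{\phi_j\}$, for each fixed $j$ we get $c_j\big((-\Delta)^r u\big)=\big((-\Delta)^r u,\phi_j\big)=\lim_{M\to+\infty}(S_M,\phi_j)=\lambda_j^{r}c_j(u)$. Finally, applying Parseval's identity \cref{Parseval} to $(-\Delta)^r u\in L^2(\Omega)$ and then the definition \cref{defHr} of the homogeneous seminorm yields
\[
\norm{(-\Delta)^r u}_{L^2(\Omega)}^2=\sum_{j=0}^{+\infty}\abs{c_j\big((-\Delta)^r u\big)}^2=\sum_{j=0}^{+\infty}\lambda_j^{2r}\abs{c_j(u)}^2=\norm{u}_{\dot H^{2r}_N(\Omega)}^2,
\]
which is precisely \cref{charL2}.

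I do not expect any genuine obstacle: the statement is essentially a bookkeeping identity for coefficients in an orthonormal basis, and the exponent algebra $(\lambda_j^{r})^2=\lambda_j^{2r}$ is the only arithmetic involved. The single point that deserves a line of justification is the interchange of the infinite sum with the scalar product when computing $c_j\big((-\Delta)^r u\big)$, and this is taken care of by the $L^2(\Omega)$-convergence of the defining series established in the first step.
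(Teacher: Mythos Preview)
Your proof is correct and follows essentially the same approach as the paper: compute the Fourier coefficients $c_j\big((-\Delta)^r u\big)=\lambda_j^{r}c_j(u)$ and apply Parseval together with the definition of the $\dot H^{2r}_N$ seminorm. You simply add an explicit justification that the defining series converges in $L^2(\Omega)$, which the paper leaves implicit.
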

\begin{proof}
From \cref{rlap} we have that $c_k((-\Delta)^r u)\coloneqq ((-\Delta)^r u,\phi_k)= \lambda_k^{r} c_k(u)$. By Parseval identity \cref{Parseval} we have
$\| (-\Delta)^r u \|^2_{L^2(\Omega)} = \sum_{k=0}^{+\infty} \lambda_k^{2r}|c_k(u)|^2$ and we conclude applying \cref{defHr}.
\end{proof}

Analogously, using the Parseval product formula in \cref{Parseval}, it is simple to prove the following integration by parts formula.
\begin{proposition}\label{lem:ip}
Let $r_1, r_2 \in [0,+\infty)$. If $u \in H_N^{r_1+r_2}(\Omega)$ and $v \in H_N^{r_2}(\Omega)$, then
\begin{equation}\label{ip}
	 \int_\Omega((-\Delta)^{r_1} u) ((-\Delta)^{r_2} v)\,\d x =  \int_\Omega((-\Delta)^{r_1+r_2} u )\, v\,\d x.
\end{equation}
\end{proposition}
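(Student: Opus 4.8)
The plan is to mimic the proof of \cref{prop:2r}, reducing everything to the Parseval product formula. First I would recall from \cref{rlap} that for any $w \in H_N^{2r}(\Omega)$ the Fourier coefficients of $(-\Delta)^{r} w$ are $c_k((-\Delta)^{r} w) = \lambda_k^{r} c_k(w)$; applying this with $w = u$ and $r = r_1$, and with $w = v$ and $r = r_2$, gives $c_k((-\Delta)^{r_1} u) = \lambda_k^{r_1} c_k(u)$ and $c_k((-\Delta)^{r_2} v) = \lambda_k^{r_2} c_k(v)$. Then by the Parseval product identity in \cref{Parseval},
\begin{equation*}
\int_\Omega (-\Delta)^{r_1} u \, (-\Delta)^{r_2} v \,\d x = \sum_{k=0}^{+\infty} \lambda_k^{r_1} c_k(u) \, \lambda_k^{r_2} c_k(v) = \sum_{k=0}^{+\infty} \lambda_k^{r_1+r_2} c_k(u) c_k(v).
\end{equation*}
On the other hand, since $u \in H_N^{r_1+r_2}(\Omega)$ we have $(-\Delta)^{r_1+r_2} u$ well-defined with $c_k((-\Delta)^{r_1+r_2} u) = \lambda_k^{r_1+r_2} c_k(u)$, so a second application of the Parseval product formula gives $\int_\Omega ((-\Delta)^{r_1+r_2} u)\, v \,\d x = \sum_{k} \lambda_k^{r_1+r_2} c_k(u) c_k(v)$, and the two expressions coincide.

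The one point requiring care — and the main (mild) obstacle — is justifying that all the series converge absolutely so that the manipulations and the application of Parseval's product formula are legitimate. For this I would note that $(-\Delta)^{r_1} u \in L^2(\Omega)$: indeed $\sum_k \lambda_k^{2r_1} c_k(u)^2 \le \sum_k (1+\lambda_k^{2(r_1+r_2)}) c_k(u)^2 < \infty$ using $u \in H_N^{r_1+r_2}(\Omega)$ and \cref{EVlim} (the eigenvalues are bounded below, so $\lambda_k^{2r_1}$ is controlled by $1 + \lambda_k^{2(r_1+r_2)}$ up to a constant). Similarly $(-\Delta)^{r_2} v \in L^2(\Omega)$ because $v \in H_N^{r_2}(\Omega)$, and $(-\Delta)^{r_1+r_2} u \in L^2(\Omega)$ and $v \in L^2(\Omega)$. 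Hence all three $L^2$ pairings are finite and the Parseval product formula applies in each case, and the common series $\sum_k \lambda_k^{r_1+r_2} c_k(u) c_k(v)$ is absolutely convergent by Cauchy--Schwarz. This completes the proof.
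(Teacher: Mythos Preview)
Your approach is exactly what the paper indicates (the paper gives no detailed proof, only the sentence ``using the Parseval product formula in \cref{Parseval}, it is simple to prove\ldots''): compute the Fourier coefficients of each fractional Laplacian and apply the Parseval product identity to both sides.

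One small slip in your convergence check: you bound $\sum_k \lambda_k^{2r_1} c_k(u)^2$ by $\sum_k (1+\lambda_k^{2(r_1+r_2)}) c_k(u)^2$ and then declare the latter finite ``using $u \in H_N^{r_1+r_2}(\Omega)$'', but membership in $H_N^{r_1+r_2}$ only gives $\sum_k \lambda_k^{r_1+r_2} c_k(u)^2 < \infty$, not $\sum_k \lambda_k^{2(r_1+r_2)} c_k(u)^2 < \infty$. The same issue arises for $(-\Delta)^{r_2} v$ with $v \in H_N^{r_2}$. This is really an imprecision in the paper's stated hypotheses rather than in your method: for both sides of \cref{ip} to be honest $L^2$ pairings one needs $u \in H_N^{2(r_1+r_2)}$ and $v \in H_N^{2r_2}$ (or else one interprets the identity by duality). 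With the hypotheses read this way, your argument goes through verbatim and coincides with the paper's intended proof.
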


An immediate consequence of  \cref{prop:interp,prop:2r} is the following inequality
\begin{equation}\label{dong-01}
	\| (-\Delta)^{\beta} v \|_{L^2(\Omega)} \leqslant    \| (-\Delta)^{\frac{s+1}{2}} v \|_{L^2(\Omega)}^{\theta} \| v \|_{L^2(\Omega)}^{1-\theta},
\end{equation}
valid for $v \in   H^{s+1}_N(\Omega)$ and $\beta \in (0, \frac{1+s}{2})$,
where $ \theta \coloneqq \frac{2\beta}{s+1}$.

\section{Main results}\label{sec:main}

The mobility function $f:\R\to\R$, defined by $f(u)=u^n$ for $u\geqslant  0$, $f(u)=0$ for $u<0$,
corresponds to a natural entropy function $G_0:\R\to [0,+\infty]$ defined by the properties $G''_0(u)=1/u^n$ for any $u>0$ and $G'_0(1)=G_0(1)=0$.
A simple computation shows that, for any $u>0$,
\begin{equation}\label{eq:G}
G_0(u)=\left\{
\begin{array}{ll}
\displaystyle u \ln u - u +1 & \mbox{ if } n=1, \\[10pt]
\displaystyle  \frac{u^{2-n}}{(n-2)(n-1)} +\frac{u}{n-1}+ \frac{1}{2-n} & \mbox{ if } 1< n< 2, \\[10pt]
\displaystyle \ln \frac 1 u +u-1 & \mbox{ if } n=2,\\[8pt]
\displaystyle \frac{1}{(n-2)(n-1)}\frac{1}{u^{n-2}} +\frac{u}{n-1} -\frac{1}{n-2} & \mbox{ if } n> 2,
\end{array}
\right.
\end{equation}
and $G_0(u)=+\infty$ for $u<0$, $G_0(0)=\displaystyle\lim_{u\to 0^+}G_0(u)$.
We observe that $G_0$ is a non-negative and lower-semicontinuous convex function with a minimum point at $u=1$.

The entropy $G_0$ plays a pivotal role in the proof of our first main result, which deals with the existence of weak solutions for problem \cref{eq:ft}.

\begin{theorem}[Existence of weak solutions]\label{Th-ex}
 Let us assume that
 \begin{equation}\label{eq:ass_1}
s\in (0,1) \qquad \textrm{ and } \qquad n\in \left[1,\frac{d+2(1-s)}{(d-2s)_{+}}\right),
 \end{equation}
 with the convention that $\frac10=+\infty$.

If the initial datum $u_0$ satisfies
\begin{equation}\label{eq:initial}
 	u_0\in H^s(\Omega), \qquad u_0 \geqslant  0, \qquad  \int_{\Omega} G_0(u_0) \,\d  x < \infty,
\end{equation}
and $T>0$, then there exists a weak solution $u$ of the problem \cref{eq:ft} in the sense that
\begin{align*}
	u &\in L^\infty((0,T); H^s(\Omega))\cap L^2((0,T);H_N^{1+s}(\Omega))\cap C([0,T];L^2(\Omega)), \\
	\partial_t u &\in L^2((0,T); (W^{1,q}(\Omega))^*),
\end{align*}
for $q\coloneqq \frac{4d}{2d - n(d-2s)_+}\geqslant  2$
and
\begin{equation}
\label{eq:weak_sol}
\begin{aligned}
& -\iint_{\Omega_T}u\partial_t v \,\d x \,\d t =    \iint_{\Omega_T}  u^n p   \Delta v \,\d x \,\d t
+n   \iint_{\Omega_T} u^{n-1} p  \nabla u  \cdot \nabla v \,\d x \,\d t + \int_{\Omega}u_0v \,\d x, \\
&\qquad\qquad\text{for all }v\in C^{\infty}_{c}(\overline{\Omega}\times [0,T))
\textrm{ such that } \nabla v\cdot {\bf n}=0 \textrm{ on }\partial\Omega\times(0,T),\\
&p = (-\Delta )^s u\qquad \textrm{ a.\,e.~in } \Omega_T.
\end{aligned}
\end{equation}

Moreover, $u$ satisfies the following properties:
\begin{itemize}
	\item {non-negativity:}
	\begin{equation}
		u(x,t)\geqslant  0 \qquad \text{for all } (x,t) \in \Omega_T,
	\end{equation}
	\item {mass conservation:}
	\begin{eqnarray}\label{eq:masscons}
	\int_{\Omega} {u(x,t) \,\d x} = \int_{\Omega} {u_0(x) \,\d x} \qquad\text{for all } t\in (0,T],
	\end{eqnarray}
	\item {entropy dissipation:} \begin{eqnarray}\label{eq:entropy}
	\int_\Omega G_0(u(x,t)) \,\d x + \int_0^t{\| u(\cdot,r)\|_{\overset{.}{H}_N^{s+1}(\Omega)}^2 \,\d r \leqslant   \int_\Omega G_0(u_0) \,\d x}  \qquad\text{for all } t\in (0,T].
	\end{eqnarray}
\item {energy dissipation:}
  \begin{eqnarray}\label{eq:ineq}
	\| {u(\cdot,t)}\|_{\overset{.}{H}_N^{s}(\Omega)}^2 + 2 \iint_{\Omega_t} \abs{g(x,r)}^2 \,\d x \,\d r \leqslant   \| u_0 \|_{\overset{.}{H}_N^s(\Omega)}^2  \qquad\text{for all } t\in (0,T],
	\end{eqnarray}
	where the vector field $g\in L^2(\Omega_T;\R^d)$ is implicitly defined by means of 
	\begin{equation}
	\label{eq:pseudo_flux_weak}
	u^{n/2}g = \nabla \left(u^n p\right)-nu^{n-1}p\nabla u\qquad \text{a.\,e.~in }\Omega_T.
	\end{equation}
\end{itemize}
Moreover, if $n\geqslant 2$, then
\begin{equation}\label{positivityu}
{\mathscr{L}^d\left(\{ u(\cdot,t) = 0 \} \right) = 0 \quad \textrm{ for any }t>0,}
\end{equation}
{and we can identify $g$ as }
 \begin{equation}
 \label{eq:pseudo_flux}
  { \iint_{\Omega_T}g\cdot\phi \,\d x \,\d t = -\iint_{\Omega_T} { u^{\frac{n}{2}}p \div\phi \,\d x \, \d t}
- \frac{n}{2}\iint_{\Omega_T} { u^{\frac{n}{2}-1} p \nabla u \cdot \phi \,\d x \,\d t} }
\end{equation}
{ for all $ \phi \in C_c^\infty ({\Omega}_T;\R^d).$}

If $d=1$ or $d=2$ and $n> 2 + \frac{2d}{2(s+1)-d}$, then 
 \[
 	u(\cdot,t) \in C^{s+1-d/2}(\overline{\Omega}) \qquad \textrm{ for a.\,e.~} t \in (0,T),
 \]
and
 \begin{equation}\label{strictpositivity}
	 u(x,t)>0\qquad\text{for all }x\in \overline\Omega \qquad \text{ for a.\,e.~} t \in (0,T).
 \end{equation}
\end{theorem}

We observe that, for $d=1$, the condition \cref{eq:ass_1} amounts to
$n\in [1,+\infty)$. Analogously, observe that the exponent $q\coloneqq \frac{4d}{2d - n(d-2s)_+}>2$ in the case $s<d/2$ and $q=2$  in the case $s\geqslant  d/2$ only (hence, for $d=1$).

Then we concentrate on the propagation properties of the solutions constructed in \cref{Th-ex}.

Our first result in this direction is an upper bound on the speed of propagation of the solution support.

\begin{theorem}[Upper bounds on interface propagation speed]\label{th:speed}
	Let $R>0$ and assume $\Omega = B_R(0)$, $s \in (\frac{(d-2)_+}{2},1)$, $n \in (1,   \frac{s+2}{s+1}  )$, and
	\begin{equation}\label{e-11}
\supp u_0 \subseteq B_{r_0}(0)
	\end{equation}
	for some $0 < r_0 < R$. Let $u$ be a solution of \cref{eq:ft} given by \cref{Th-ex}. Then there exists a time $T^* >0$ and a nondecreasing function $d(t) \in C([0,T^*])$ such that
$$
	d(0) = r_0, \qquad d(t) \leqslant   r_0 + C_0 t^{\frac{1}{nd + 2(s+1)}} \qquad\text{for all } t \in [0,T^*],
$$
where $C_0 > 0$ depends on initial data $u_0$, $d$, $s$ and  $n$,
such that
	\begin{align*}
	\supp(u(\cdot,t)) \subset  B_{d(t)}(0) \subseteq \Omega \qquad\text{for all } t \in [0,T^*].
	\end{align*}
\end{theorem}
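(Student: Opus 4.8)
The plan is to use the local entropy estimate from Lemma~3.1 (stated in Section~3 as \cref{lemma-lee}, referenced above) as the engine of a Stampacchia-type iteration, adapted to the nonlocal setting. First I would fix a point and consider balls $B_\rho(0)$ with $r_0 < \rho < R$, together with smooth cut-off functions $\zeta_\rho$ supported in $B_{\rho'}(0)$ and equal to $1$ on $B_\rho(0)$ for $\rho<\rho'$. Testing the (regularized) equation against $G_0'(u)$ multiplied by an appropriate power of $\zeta_\rho$ yields, after the computations encoded in \cref{lemma-lee}, a differential inequality for the localized entropy $E_\rho(t) := \int_{B_\rho(0)} G_0(u(x,t))\,\d x$ of the schematic form
\begin{equation*}
E_\rho(t) + \int_0^t \int_{B_\rho(0)} \left|(-\Delta)^{\frac{s+1}{2}}\!\big(\text{localized }u\big)\right|^2 \d x\,\d r \leqslant C \int_0^t \Phi\big(E_{\rho'}(r),\ \rho'-\rho\big)\,\d r,
\end{equation*}
where $\Phi$ collects the ``error'' contributions: the commutator/tail terms coming from the nonlocality of $(-\Delta)^s$, and the terms produced when derivatives fall on the cut-off. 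The restriction $n\in(1,\frac{s+2}{s+1})$ is exactly what makes all these error terms controllable using only the regularity $u\in L^2((0,T);L^\infty(\Omega))$ and the global entropy/energy bounds \cref{eq:entropy}--\cref{eq:ineq}; the condition $s>\frac{(d-2)_+}{2}$ (i.e. $d-2s<2$) is what makes the relevant Sobolev embeddings of $H^{s+1}_N$ and the interpolation \cref{dong-01} usable in dimension $d$.

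The second step is to combine this localized entropy inequality with a modified Gagliardo--Nirenberg interpolation (the one announced for \cref{app:lemmas}) to absorb the gradient/seminorm term and close an inequality purely in terms of the scalar quantities $E_\rho$ and the set $\{u>0\}\cap B_\rho$. Following the scheme of \cite{DalPassoGiacomelliGruen,GruenWTWS,CT12}, one obtains an inequality of Stampacchia iteration type relating the ``energy at level $\rho$'' to the ``energy at level $\rho'$'' over a family of shrinking balls, with a superlinear power and a geometric loss in $(\rho'-\rho)$. The exponent $\frac{1}{nd+2(s+1)}$ in the conclusion is precisely the scaling exponent that comes out of balancing the power of $E$ against the power of the radius increment and the power of $t$ in this iteration; it is the same exponent that governs the self-similar profiles of \cite{MR3406645,Se_Va}, which is a good consistency check. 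Because of the extra nonlocal term that \emph{cannot} be made to vanish by localization (the tail of $(-\Delta)^s u$ always sees the bulk of the mass), the clean Stampacchia lemma does not apply verbatim; instead I would argue by contradiction: assume the support of $u(\cdot,t)$ escapes $B_{d(t)}$ for the claimed $d$, run the iteration on the energy functionals to force $E_\rho \equiv 0$ on a range of $\rho$ for small $t$, and derive that $u\equiv 0$ on an annulus, contradicting the assumed escape. The function $d(t)$ is then constructed as the (continuous, nondecreasing) envelope of the radii for which the iteration certifies vanishing, with $d(0)=r_0$ forced by \cref{e-11} and the continuity $u\in C([0,T];L^2(\Omega))$; shrinking to a possibly small $T^*>0$ is needed so that $d(t)<R$, keeping all the cut-offs inside $\Omega=B_R(0)$.

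The main obstacle, as flagged in the introduction, is controlling the genuinely nonlocal contributions in \cref{lemma-lee}: when one localizes, $(-\Delta)^s(\zeta u)\ne \zeta\,(-\Delta)^s u$, and the discrepancy is a nonlocal operator with a kernel that decays only polynomially, so it transmits information from outside $B_{\rho'}$ into the estimate on $B_\rho$. The remedy is the tail-estimate lemmas collected in \cref{app:lemmas}, which quantify $\|(-\Delta)^s(\zeta u) - \zeta(-\Delta)^s u\|$ on $B_\rho$ in terms of $\|u\|$ on $B_{\rho'}^c$ with a factor $(\rho'-\rho)^{-\gamma}$; feeding these into $\Phi$ keeps the iteration superlinear but forces the contradiction argument in place of a direct application of Stampacchia's lemma. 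A secondary technical point is that, since the weak solution is only in $L^2((0,T);L^\infty)$ rather than $L^\infty(\Omega_T)$, one must be careful that every occurrence of a power $u^{n-1}$ or $u^{n/2-1}$ paired with $p=(-\Delta)^s u$ and $\nabla u$ integrates, and this is where $n<\frac{s+2}{s+1}$ is used sharply — the same borderline that, as the authors note, they expect to remove once $\alpha$-entropy estimates à la \cite{DPGG98SIAM} become available.
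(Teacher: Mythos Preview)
Your high-level plan is right (local entropy estimate \cref{lemma-lee}, Gagliardo--Nirenberg, Stampacchia iteration, contradiction to handle the nonlocal term), but the geometry is set up backwards in a way that prevents the iteration from closing. To prove $\supp u(\cdot,t)\subset B_{d(t)}$ you must show $u\equiv 0$ on the \emph{exterior} annulus $\Omega(S)=\{S<|x|<R\}$; accordingly the paper's cut-off $\psi_{S,\sigma}$ equals $1$ on $\Omega(S+\sigma)$ and vanishes on $B_S(0)$, and the quantity one iterates on is $A_T(S)=\iint_{\Omega_T(S)}u^2\,\d x\,\d t$, not an entropy over interior balls. Your $E_\rho=\int_{B_\rho}G_0(u)\,\d x$ cannot be driven to zero: for $n\in(1,2)$ one has $G_0(0)=\tfrac{1}{2-n}>0$, so $E_\rho=0$ would force $u\equiv 1$ on $B_\rho$, not $u\equiv 0$; and in any case the whole mass sits in $B_{r_0}\subset B_\rho$, so $E_\rho$ is bounded away from zero from the start. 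The inequality you wrote, with the right-hand side controlled by $E_{\rho'}$ for a \emph{larger} $\rho'$, also iterates in the wrong direction for a Stampacchia argument.

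The second gap is the mechanism of the contradiction and the origin of the sharp rate. The genuine obstruction in \cref{rt-2} is the \emph{sublinear} term $A_T^{\varpi}(S)$ with $\varpi<1$; Stampacchia's lemma needs a superlinear right-hand side. The paper's fix is to assume, for contradiction, that $u(\cdot,t)>0$ everywhere on $\overline\Omega$; this furnishes a lower bound $A_T(S)\geqslant \gamma_T(S)>0$ via an auxiliary non-increasing function $\gamma_T$ satisfying \cref{ddr}, which lets one replace $A_T+A_T^{\varpi}$ by $\tilde C_T(S)\,A_T$ and then run the iteration on $\tilde A_T(S)=\tilde C_T(S)^{-\lambda}A_T(S)$. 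Finally, the exponent $\tfrac{1}{nd+2(s+1)}$ is not read off from a single pass: after the superlinear step (Gagliardo--Nirenberg with $b=2-n$, giving $\beta>1$ and finite speed), the paper runs a \emph{second} Gagliardo--Nirenberg with $b=1$ using mass conservation (see \cref{int-5}--\cref{int-6}), producing a sub-unit exponent $\tfrac{d}{d+2(s+1)}$ to which case~(iii) of \cref{L-1} applies, and the sharp rate emerges by combining the two steps. These two ingredients --- the auxiliary lower-barrier $\gamma_T$ and the two-stage Gagliardo--Nirenberg argument --- are absent from your plan.
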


Finally, we obtain a lower bound on the waiting time.

\begin{theorem}[Lower bounds on waiting times]\label{th:wt}
Let us assume  $\Omega = B_R(0)$, $s \in (\frac{(d-2)_+}{2},1)$, $n \in (1,   \frac{s+2}{s+1}  )$,
\cref{e-11} for some $0 < r_0 < R$, and that
\begin{equation}\label{G}
\limsup_{\delta \to 0^+} \, \left( \delta^{-\gamma(2-n)} \fint_{B_{r_0}(0)\setminus B_{r_0-\delta}(0)} |G_0(u_0(x)) - G_0(0)| \dd x\right) < \infty
\end{equation}
for $\gamma \geqslant  \frac{2(s+1)}{n}$.  Let $u$ be a solution of \cref{eq:ft} given by \cref{Th-ex}. Then there exists a time $T_0 = T_0(n,s, u_0)$ such that
\begin{align*}
\supp(u(\cdot,t)) \subseteq B_{r_0}(0) \quad \text{ for } t \in [0,T_0).
\end{align*}
Moreover, the waiting time is estimated from below by
\begin{align*}
T_0 \geqslant  C \left( \sup_{\delta>0} \delta^{-\gamma(2-n)} \fint_{B_{r_0}(0)\setminus B_{r_0-\delta}(0)} | G(u_0(x)) -G_0(0)| \dd x \right)^{-\frac{n}{2-n}}.
\end{align*}
\end{theorem}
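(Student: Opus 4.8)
The plan is to prove Theorem~\ref{th:wt} (lower bounds on waiting times) as a companion to Theorem~\ref{th:speed}, extracting both claims from the local entropy estimate of Lemma~\ref{lemma-lee}. The key object is the localized entropy functional
\begin{equation*}
\mathcal{E}(t;r) \coloneqq \int_{\Omega\setminus B_r(0)} \bigl( G_0(u(x,t)) - G_0(0) \bigr)\,\d x,
\end{equation*}
for $r$ slightly less than $r_0$, together with a suitable cut-off $\zeta$ supported in $\Omega\setminus B_{r_0-\delta}(0)$ and equal to $1$ outside $B_{r_0}(0)$. The hypothesis \eqref{G} is exactly the statement that the initial datum of this localized entropy decays like $\delta^{\gamma(2-n)}$ as the annulus width $\delta\to 0$, with $\gamma\geqslant \tfrac{2(s+1)}{n}$; the goal is to show that as long as the support has not crossed $B_{r_0}(0)$, the same power-law flatness is essentially preserved on a time interval $[0,T_0)$ with $T_0$ bounded below as asserted.

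The main steps, in order, are as follows. \emph{First}, I would test the weak formulation \eqref{eq:weak_sol} (or rather the entropy-level inequality behind \eqref{eq:entropy}) against $G_0'(u)$ multiplied by an appropriate power of the cut-off $\zeta$, reproducing the local entropy inequality of Lemma~\ref{lemma-lee}: roughly,
\begin{equation*}
\frac{\d}{\d t}\int_\Omega \zeta^{\sigma} \bigl(G_0(u) - G_0(0)\bigr)\,\d x + c\int_\Omega \zeta^{\sigma}\,|(-\Delta)^{\frac{s+1}{2}}u|^2\,\d x \leqslant C\,\mathcal{R}(t),
\end{equation*}
where $\mathcal{R}(t)$ collects the "bad" terms coming from derivatives hitting $\zeta$ and from the nonlocal tail of $(-\Delta)^s$ (these are the terms estimated via the modified Gagliardo--Nirenberg inequality and the tail estimates announced for \cref{app:lemmas}; this is where the restriction $n<\frac{s+2}{s+1}$ for $d\in\{2,3\}$ enters, since we only have $u\in L^2((0,T);L^\infty)$). \emph{Second}, I would bound $\mathcal{R}(t)$ in terms of $\mathcal{E}(t;r_0-\delta)^{\alpha}$ for a suitable exponent $\alpha>1$ times negative powers of $\delta$ (and times the dissipation term, absorbable into the left-hand side), producing a closed differential inequality of the form
\begin{equation*}
\frac{\d}{\d t}\,\mathcal{E}(t) \leqslant C\,\delta^{-\kappa}\,\mathcal{E}(t)^{1+\beta},
\end{equation*}
where $\beta = \beta(n,s,d)>0$ and $\kappa$ is chosen to be compatible with the scaling $\gamma=\frac{2(s+1)}{n}$. \emph{Third}, I would run the argument by contradiction announced in the introduction: suppose the support reaches outside $B_{r_0}(0)$ at some time $t_1<T_0$; then $\mathcal{E}(t_1; r_0) > 0$, and integrating the ODE inequality backward from the (nontrivial) value at $t_1$ while sending $\delta\to 0$ forces $\mathcal{E}(0)$ to violate the bound imposed by \eqref{G}, unless $t_1$ is bounded below by the claimed quantity; optimizing the free parameter $\delta$ against $t_1$ (balancing $\delta^{-\kappa} t_1$ against $\delta^{\gamma(2-n)}$-type terms) yields $T_0 \gtrsim \bigl(\sup_\delta \delta^{-\gamma(2-n)}\fint_{\text{annulus}}|G_0(u_0)-G_0(0)|\bigr)^{-\frac{n}{2-n}}$. \emph{Fourth}, the monotone/continuous structure of $d(t)$ from Theorem~\ref{th:speed} gives the qualitative statement $\supp u(\cdot,t)\subseteq B_{r_0}(0)$ on $[0,T_0)$.

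The hard part will be \emph{Step~2}: controlling the remainder $\mathcal{R}(t)$, and in particular the nonlocal term, with the correct power of $\delta$ and the correct superlinear power of the localized entropy, so that the resulting differential inequality integrates to the sharp waiting-time exponent $-\tfrac{n}{2-n}$. Unlike the local TFE, testing against $(-\Delta)^s$ of a localized function does not localize: one gets a long-range contribution from the region where $\zeta$ is already $1$ (or $0$), and this must be reabsorbed either into the dissipation $\int\zeta^\sigma|(-\Delta)^{\frac{s+1}{2}}u|^2$ or into $\mathcal{E}(t)^{1+\beta}$ via interpolation — this is precisely why a direct Stampacchia iteration fails and the contradiction argument is needed. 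A secondary technical point is that $G_0$ is only locally integrable and $G_0'$ blows up at $u=0$ when $n\geqslant 1$, so all the manipulations above must be justified on the regularized/approximate solutions of \cref{sec:existence} and then passed to the limit, using the lower semicontinuity of the entropy and the weak convergence of $(-\Delta)^{\frac{s+1}{2}}u_\varepsilon$; but this is routine once the a priori inequality is in hand.
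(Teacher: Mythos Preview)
Your outline diverges from the paper in a fundamental way: the paper does \emph{not} reduce the problem to a differential inequality in time for a localized entropy. Instead, it works with the time-integrated $L^2$-energy
\[
A_T(S) = \iint_{\Omega_T(S)} u^2\,\d x\,\d t
\]
and iterates in the \emph{spatial} variable $S$. From the local entropy estimate (Lemma~\ref{lemma-lee}) combined with a Gagliardo--Nirenberg step (exactly as in the proof of Theorem~\ref{th:speed}), one obtains a recursion of the form
\[
\tilde A_T(S+\delta)\;\leqslant\;C\,T^{\mu}\Bigl(\delta^{-2(s+1)}\bigl(\mathcal S\,\delta^{\sigma}+\tilde A_T(S)\bigr)\Bigr)^{\beta}\qquad(S\geqslant r_0-\delta),
\]
where the inhomogeneity $\mathcal S\,\delta^{\sigma}$ comes from $\int_{\Omega(S)}|G_0(u_0)-G_0(0)|\,\d x$ and $\sigma=d+2(s+1)+(2-n)\gamma$. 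One then applies the \emph{inhomogeneous} Stampacchia lemma (Lemma~\ref{lem:stampacchia2}) to conclude $\tilde A_{T_0}(r_0)=0$; the condition $\gamma\geqslant\tfrac{2(s+1)}{n}$ is precisely the threshold $\sigma\geqslant\tfrac{2\beta(s+1)}{\beta-1}$ needed there. The contradiction argument you mention is indeed present, but it enters earlier (inherited from the FSP proof) solely to replace $A_T(S)+A_T^{\varpi}(S)$, $\varpi<1$, by $\tilde C_T(S)\,A_T(S)$ so that the spatial recursion becomes monomial in $A_T$; it is not a device to bypass Stampacchia.

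Your Step~2 has a genuine gap. The remainder produced in the proof of Lemma~\ref{lemma-lee} is controlled, pointwise in $t$, by $\|u(t)\|_{L^2(\Omega(S))}^2$ together with factors of $\|u(t)\|_{L^\infty}^{n-1}$ that are only in $L^2_t$; it is exactly the H\"older-in-time step on these factors that produces the sub-unity exponent $\varpi$ in \eqref{rt-2}. These quantities are \emph{not} bounded pointwise in $t$ by $\mathcal E(t)^{1+\beta}$: to pass from $\|u\|_{L^2}$ to a power of $\int u^{2-n}\sim\mathcal E$ one needs Gagliardo--Nirenberg with the dissipation, but the leftover $\|u\|_{L^\infty}$-factors do not integrate to a closed ODE of the form $\mathcal E'\leqslant C\delta^{-\kappa}\mathcal E^{1+\beta}$ with the $\kappa$ that would recover the sharp threshold on $\gamma$. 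Moreover, your Step~3 as written is not valid: an upper bound $\mathcal E'\leqslant C\,\mathcal E^{1+\beta}$ gives \emph{no} backward lower bound on $\mathcal E(0)$ from $\mathcal E(t_1)>0$ (the ODE is non-unique at zero), so one cannot ``force $\mathcal E(0)$ to violate \eqref{G}'' by integrating backward. The route that works is the spatial iteration above, where the flatness hypothesis \eqref{G} appears as the forcing term in the inhomogeneous Stampacchia scheme rather than as an initial condition for an ODE.
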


\section{Proof of the existence result}\label{sec:existence}

This section is devoted to the proof of  \cref{Th-ex}.

\subsection{A regularized problem}

Let us fix $\varepsilon, \delta, \gamma>0$, and
a parameter  $\alpha > \max \{2,n\}$. We consider the approximations of the mobility function $f(z)=(z)_+^n$, $z\in \R$, defined by
\begin{equation}\label{defregmob}\begin{aligned}
	f_{\eps,\delta}(z) &\coloneqq  \begin{cases} \displaystyle \frac{z^{n+\alpha}}{z^{\alpha}+\varepsilon z^{n}+\delta z^{n+\alpha}} & \text{if } z>0,\\ 
	 0 & \text{if }z\leqslant  0, 
    \end{cases} \\ 
	f_{\eps,\delta,\gamma}(z) &\coloneqq  f_{\eps,\delta}(z) + \gamma, \qquad \text{for all } z\in\R.
\end{aligned}\end{equation}

We observe that the parameter \(\delta>0\) yields the boundedness of  \(f_{\eps,\delta}\) and \(f_{\eps,\delta,\gamma}\), precisely it holds 
\begin{equation}\label{boundedf}
	0\leqslant   f_{\eps,\delta}(z) \leqslant   1/\delta, \qquad  \gamma \leqslant   f_{\eps,\delta,\gamma}(z) \leqslant   1/\delta +\gamma  \qquad\text{for all }z\in\R.
\end{equation}
The parameter \(\varepsilon>0\) has the effect of increasing the degeneracy at $0$ from \(f(z) \sim (z)_+^{n}\) to 
\begin{equation}\label{eq:f0}
	f_{\eps,\delta}(z) \sim \tfrac{1}{\eps}(z)_+^{\alpha} \qquad \text{as } z\to 0. 
\end{equation}
Moreover, since
$$f'_{\varepsilon,\delta}(z)=\frac{z^{n+\alpha -1}(n z^{\alpha} + \varepsilon \alpha z^n)}{(z^{\alpha} + \varepsilon z^n + \delta z^{n+\alpha})^2}
\quad \text{ if }z>0, \quad f'_{\eps,\delta}(z)=0 \quad \text{ if }z\leqslant  0,$$
we have
\begin{equation}
\label{eq:f'}
\begin{aligned}
f'_{\eps,\delta}(z) &\sim \frac{\alpha}{\eps}(z)_+^{\alpha-1} && \textrm{as }
{z}\to 0,\\
f'_{\eps,\delta}(z) &\sim \frac{n}{\delta^2}{z}^{-n-1} && \textrm{as } {z} \to +\infty.
\end{aligned}
\end{equation}
Since $\alpha>2$, from \cref{eq:f'} and the continuity of $f'_{\varepsilon,\delta}$ it is immediate to prove that there exists a constant
$L>0$, depending on $\varepsilon$ and $\delta$, such that $|f'_{\varepsilon,\delta}(z)| \leqslant   L$ for any $z\in\R$, i.\,e., $f_{\eps,\delta}$ is Lipschitz continuous.

We consider an approximation of the non-negative initial datum $u_{0}$ such that
\begin{align}
	&u_{0, \eps,\delta}\in H^1(\Omega), && u_{0, \eps,\delta} \geqslant  u_{0}+\varepsilon^{\theta_{1}}+\delta^{\theta_{2}}  && \text { for some } 0<\theta_{1}< \frac{1}{\alpha -2},\  \theta_{2}>0, \label{H1p} \\
&u_{0, \eps,\delta,\gamma} \in H^1(\Omega), && u_{0, \eps,\delta,\gamma} \to u_{0,\eps,\delta} && \text { strongly in } H^{1}(\Omega)  \text { as } \gamma \rightarrow 0,\label{H1c} \\
&{} &&u_{0, \eps,\delta} \to u_{0} && \text { strongly in } H^{s}(\Omega)  \text { as } \eps,\delta \rightarrow 0, \label{Hsc}
\end{align}
where the parameters \(\varepsilon\) and \(\delta\) are used to lift the initial data to be strictly positive even if \(\gamma=0\).

Let us consider the regularized problem of \cref{eq:ft} in $\Omega$:
\begin{equation}
\label{eq:ft-r}\tag{$P_{\eps,\delta,\gamma}$}
\begin{cases}
\partial_t u (x,t) = \div(f_{\eps,\delta,\gamma}(u(x,t)) \nabla p(x,t) ), & (x,t) \in \Omega_T,\\
p(x,t)= \Ds u(x,t), &  (x,t) \in \Omega_T,\\
\nabla u \cdot \textbf{n} = \nabla p \cdot \textbf{n} = 0, &  (x,t) \in  \partial \Omega  \times (0,T),\\
u(x,0) =u_{0,\eps,\delta,\gamma}(x), & x \in \Omega.
\end{cases}
\end{equation}

We shall first prove the existence of a sequence of weak solutions $u_{\eps,\delta,\gamma}$ of Problem \cref{eq:ft-r}; as a second step, we shall prove the compactness of such sequence by relying on suitable energy and entropy estimates; finally, we check that the limit function is a solution of \cref{eq:ft}.

We say that a couple $(u_{\eps,\delta,\gamma}, p_{\eps,\delta,\gamma})$ is a weak solution of \cref{eq:ft-r} if
\[
\begin{aligned}
&(u_{\eps,\delta,\gamma}, p_{\eps,\delta,\gamma} ) \in \left(L^{\infty}((0,T); H_N^s(\Omega)) \cap L^{2}((0,T); H_N^{2s+1}(\Omega))\right)
 \times L^{2}((0,T); H^{1}(\Omega)),\\
 &  \partial_t u_{\eps,\delta,\gamma} \in L^2((0,T); (H^{1}(\Omega))^*),\\
&\lim_{t\to 0^+}u_{\eps,\delta,\gamma}(\cdot,t) = u_{0,\eps,\delta,\gamma}(\cdot) \qquad \textrm{ a.\,e.~in } \Omega,
\end{aligned}
\]
and
\begin{equation}\label{apr-001}
\begin{cases}
\displaystyle\int_{0}^T  \langle \partial_t u_{\eps,\delta,\gamma} , v \rangle_{(H^1)^{*}, H^1} \, \d t & {}
\\ \displaystyle \qquad = -  \iint_{\Omega_T}
  f_{\eps,\delta,\gamma}(u_{\eps,\delta,\gamma} )  \nabla p_{\eps,\delta,\gamma} \cdot  \nabla v  \,\d x\, \d t & \text{for all } v \in L^{2}((0,T); H^{1}(\Omega)),\\
  \displaystyle p_{\eps,\delta,\gamma} = (-\Delta)^s u_{\eps,\delta,\gamma} & \textrm{ a.\,e.~in }
   \Omega_T.
   \end{cases}
\end{equation}

\subsection{The Faedo--Galerkin scheme}

To prove the existence of a solution to Problem \cref{eq:ft-r}
we rely on the Faedo--Galerkin approximation scheme. To ease the notation we remove the dependence on $\eps, \delta, \gamma$.

Using the notation of
\cref{PSP}, we denote by $\{\lambda_i \}_{i \in \mathbb{N}}$ and $\{\phi_i \}_{i \in \mathbb{N}}$ the sequences of the eigenvalues
and the normalized eigenfunctions of the Laplace operator with Neumann boundary conditions.
As already observed, the system of the eigenfunctions $\{\phi_i \}_{i \in \mathbb{N}}$ is an orthonormal basis of $L^2( \Omega)$ and an ortogonal system in $H^1(\Omega)$.

We introduce the discretized problem. For  $N\geqslant  0$ we define the $(N+1)$-dimensional space
\begin{equation}\label{eq:VN}
V_N \coloneqq  \textrm{span}\left\{\phi_i\right\}_{i=0,\ldots,N}
\end{equation}
and we observe that $V_N\subset H^{s}(\Omega)$ for every $s\in [0,1]$.
Moreover we have that $V_M\subset V_N$ for $N > M$ and that $\bigcup_{N=0}^{+\infty}V_N$ is dense in $H^s(\Omega)$ for any $s\in [0,1]$.

Defining
\begin{equation}\label{eq:proju0}
	u_{0}^N \coloneqq  \sum_{i=0}^{N}(u_{0,\eps,\delta,\gamma},\phi_i)\phi_i,
\end{equation}
the projection of the initial datum $u_{0,\eps,\delta,\gamma}$ on $V_N$,
we look for a couple $(u^N, p^N)\in H^1((0,T);V_N)\times L^2((0,T);V_N)$ solving  the system
\begin{equation}\label{eq:discretized}
\begin{cases}
\displaystyle
\int_{\Omega}  \partial_t u^N v\,\d x = - \int_{\Omega}  f_{\eps,\delta,\gamma}(u^N)  \nabla p^N \cdot \nabla v \,\d x,\\
\displaystyle\int_{\Omega}  p^N v \,\d x =   \int_{\Omega}  (-\Delta)^s u^N   v \,\d x,\\
\displaystyle u^{N}(\cdot,0) = u^N_{0}(\cdot) \quad \textrm{ a.\,e.~in } \Omega
\end{cases}
\end{equation}
for any $t\in (0,T)$ and for any $v \in V_N$. Taking $v=\phi_i$ for $i=0,\ldots,N$ in \cref{eq:discretized}, Problem \cref{eq:discretized} can be rewritten
as a system of nonlinear ordinary differential equations for the ''components'' of the vectors $u^N$ and $p^N$ in $V^N$ with respect to the chosen basis.
More precisely for $i=0,\ldots, N$ we look for functions $c_i\in H^1(0,T)$ and $d_i\in L^2(0,T)$ such that
\[
u^{N}(t) = \sum_{i =0}^{N} {c_i(t) \phi_i} \quad \text{and} \quad p^{N}(t) = \sum_{i =0}^{N} {d_i(t) \phi_i }
\]
solve \cref{eq:discretized}.

Using the spectral definition \cref{eq:fl} of the  operator $(-\Delta)^s$, it is immediate to show that
if
\[u^{N} = \sum_{i =0}^{N} {c_i \phi_i} \in V_N,
\]
then
\[(-\Delta)^su^{N} \in V_N \quad \text{and}\quad (-\Delta)^su^{N} = \sum_{i =0}^{N} \lambda_i^s{c_i \phi_i}.
\]
The second equation in \cref{eq:discretized} can be  rewritten as
\begin{equation}
\label{ap-02}
	d_j(t) = \lambda_j^s c_j(t), \qquad t\in(0,T), \quad j=0,\ldots,N.
\end{equation}
The first equation in  \cref{eq:discretized} and the initial datum condition are equivalent to the following
Cauchy problem for a system of ordinary differential equations for
$(c_0,\ldots,c_N)$:
\begin{equation}
\label{ap-01}
\begin{cases}
\displaystyle
\frac{\d}{\d t} c_j(t) = - \gamma \lambda_j^{s+1} c_j(t) & {}
\\ \displaystyle \qquad \qquad \qquad  - \sum_{k =1}^{N} \lambda_k^s c_k(t) \int_{\Omega}  f_{\eps,\delta } \Bigl( \sum_{i =1}^{N} {c_i(t) \phi_i} \Bigr)  \nabla \phi_k \cdot \nabla \phi_j  \,\d x, & \qquad  j = 0, \ldots, N,\\
c_j(0) = (u_0^N, \phi_j). & {}
\end{cases}
\end{equation}
Defining the function $F:\R^{N+1}\to\R^{N+1}$ by
$$(F(c_0,\ldots,c_N))_j = - \gamma \lambda_j^{s+1} c_j
- \sum_{k =1}^{N} \lambda_k^s c_k \int_{\Omega}  f_{\eps,\delta } \Bigl( \sum_{i =1}^{N} {c_i \phi_i} \Bigr)  \nabla \phi_k  \cdot\nabla \phi_j  \,\d x, \qquad j=0,\ldots,N,$$
it is not difficult to show that
$F$ is locally Lipschitz and there exists $L_N>0$ such that $|F(c)| \leqslant   L_N |c|$ for any $c\in\R^{N+1}$.
In particular, owing to Cauchy--Lipschitz/Picard-Lindel\"of's theorem, for any $T>0$, the problem \cref{ap-01} has a unique global classical solution $c\in C^1([0,T];\R^{N+1})$.

Now, we prove suitable uniform (with respect to $N$) estimates in order to pass to the limit as $N\to+\infty$.
First of all, we note that also at the discrete level the evolution preserves mass.
Indeed, taking $v = 1$ in \cref{eq:discretized} (recall that $\phi_0$ is constant and belongs to $V_N$), we get that
$$
\frac{\d}{\d t} \int_{\Omega} u^N\,\d x= 0,
$$
which readily implies that
\begin{equation}
\label{eq:mass_cons_galerkin}
\int_{\Omega}u^{N}(x,t)\,\d x = \int_{\Omega}u_0^{N}(x) \,\d x \qquad\text{for all } t\in [0,T].
\end{equation}
Multiplying the equation in \cref{ap-01}) by $\lambda_j^s c_j(t)$ and summing on $j$ from $1$ to $N$, we obtain that
\[
\frac{1}{2} \frac{\d}{\d t} \| u^N \|^2_{\dot{H}^s(\Omega)} + \gamma   \| u^N \|^2_{\dot{H}^{2s+1} (\Omega)}
+ \int_{\Omega} { f_{\eps,\delta}(u^N) | \nabla p^N|^2 \,\d x} = 0 \qquad \text{in }(0,T).
\]
This implies, since $f_{\eps,\delta}\geqslant  0$, that
\begin{equation}\label{ap-03}
\begin{aligned}
	&\frac{1}{2} \| u^N(t) \|^2_{\dot{H}^s(\Omega)} + \gamma  \int_0^t\| u^N(\tau) \|^2_{\dot{H}^{2s+1} (\Omega)} \,\d \tau
\leqslant   \frac{1}{2} \| u^N(0) \|^2_{\dot{H}^s(\Omega)}\leqslant   \frac{1}{2}\norm{u_{0,\eps,\delta,\gamma}}^2_{\dot{H}^s(\Omega)} 
\end{aligned}
\end{equation}
for all $t \in (0,T]$. Therefore, by \cref{eq:mass_cons_galerkin}, the equivalence \cref{eq:equiv} and the convergences \cref{H1c} \cref{Hsc}, we have that
 there exists a constant $C$ independent of $N$, $\eps$, $\delta$ and $\gamma$ such that
\begin{equation}\label{eq:estimate1_galerkin}
 	\| u^N \|_{L^{\infty}((0,T);{H}^s(\Omega))} \leqslant   C, \qquad  \| u^N\|_{L^2((0,T);{H}^{2s +1}(\Omega))} \leqslant   C/\gamma.
\end{equation}
Observing that  $ \| u^N(t)\|_{\dot{H}^{2s +1}(\Omega)}=  \| \nabla p^N(t)\|_{L^2(\Omega)}$,
the inequality \cref{eq:estimate1_galerkin} yields immediately that
\begin{equation}\label{eq:estpN}
  \|\nabla p^N\|_{L^2((0,T);{L}^{2}(\Omega))} \leqslant   C/\gamma.
\end{equation}

We fix  $w\in H^1(\Omega)$ such that $\norm{w}_{H^1(\Omega)}\leqslant   1$. Then $w$ decomposes as
$w = v + \xi$ with $v\in V_N$ and $\xi\perp V_N$ with respect to the scalar product in $L^2(\Omega)$, namely, $\int_{\Omega}\xi \phi_i\,\d x = 0$ for any $i=0,1,\ldots,N$.
Since  $\left\{\phi_i\right\}_{i\in\N}$ is also an orthogonal system in $H^1(\Omega)$, it is easy to show that $\norm{v}_{H^1(\Omega)}\leqslant   1$.
By \cref{eq:discretized} we have, for any $t\in(0,T)$,
\[
\langle \partial_t u^N,w\rangle = \int_{\Omega}\partial_t u^N w\,\d x = \int_{\Omega}\partial_t u^N v\,\d x = -\int_{\Omega}f_{\eps,\delta,\gamma}(u^N)\nabla p^N\cdot\nabla v\,\d x,
\]
\[
\langle \partial_t u^N,w\rangle = -\gamma \int_{\Omega}\nabla p^N\nabla v\,\d x- \int_{\Omega}f_{\eps,\delta}(u^N)\nabla p^N\cdot\nabla v\,\d x.
\]
Since  $\norm{w}_{H^1(\Omega)}\leqslant   1$ and $\norm{v}_{H^1(\Omega)}\leqslant   1$, the last equality implies that
\[
\abs{\langle \partial_t u^N,w\rangle}\leqslant   \gamma\norm{\nabla p^N}_{L^2(\Omega)} + \norm{f_{\eps,\delta}(u^N)\nabla p^N}_{L^2(\Omega)},
\]
and, by definition of dual norm,
\[
\norm{\partial_t u^N}_{\left(H^1(\Omega)\right)^*} \leqslant   \gamma\norm{\nabla p^N}_{L^2(\Omega)} + \norm{f_{\eps,\delta}(u^N)\nabla p^N}_{L^2(\Omega)}.
\]
By \cref{boundedf}, \cref{eq:estpN} and the last inequality, we obtain
\begin{equation}
\label{eq:estimate_2galerkin}
	\norm{\partial_t u^N}_{L^2((0,T);\left(H^1(\Omega)\right)^*)}\leqslant   (2\gamma+\frac{1}{\delta})\frac{C}{\gamma}
\end{equation}
for a constant $C$ independent of $N$.

Using well-known weak and weak-$\ast$ compactness results, from \cref{eq:estimate1_galerkin} and \cref{eq:estimate_2galerkin},
we obtain that there exist $u$ and a (not relabeled) subsequence of $N$ such that
\[
\begin{aligned}
& u^{N}\xrightarrow{N\to +\infty}u && \textrm{ weakly-$\ast$ in } L^{\infty}((0,T);H^s(\Omega))\\
& \partial_t u^N \xrightarrow{N\to +\infty}\partial_t u && \textrm{ weakly in }L^{2}((0,T); \left(H^1(\Omega)\right)^*),
\end{aligned}
\]
which implies, thanks to Aubin--Lions' compactness lemma (see, e.\,g., \cite{simon87}),
\begin{equation}\label{CSL2}
    u^N \xrightarrow{N\to +\infty}  u \qquad   \text{ strongly in } C([0,T]; L^2(\Omega)).
\end{equation}

In particular, possibly extracting a further subsequence, we get
\[
u^N \xrightarrow{N\to +\infty} u\qquad \textrm{almost everywhere in } \quad \Omega_T.
\]
This last convergence and Lebesgue's  dominated convergence theorem imply that
\[
f_{\eps,\delta,\gamma}(u^N)\xrightarrow{N\to +\infty}f_{\eps,\delta,\gamma}(u) \qquad\textrm{ strongly in } L^2((0,T);L^2(\Omega)).
\]
Finally, by \cref{eq:estimate1_galerkin} we have that
\[
  u^N \xrightarrow{N\to +\infty} u  \qquad    \text{ weakly in } L^{2}((0,T); H^{2s+1} (\Omega) ),
\]
and thus, using \cref{CSL2} and the interpolation \cref{interpsemi}
\[
u^N \xrightarrow{N\to +\infty} u \qquad  \text{ strongly in } L^2((0,T); H^{2s} (\Omega)).
\]
By construction we have that
\[
p^N = (-\Delta)^s u^N \qquad \textrm{  in } \Omega_T.
\]
Therefore, there exists $p\in L^2((0,T);H^1(\Omega))$ such that
\[
p^N\xrightarrow{N\to +\infty} p \qquad \textrm{ weakly in } L^2((0,T);H^1(\Omega)).
\]
In particular, it is easy to show that
\[
p = (-\Delta)^s u  \qquad \textrm{ almost everywhere in } \Omega_T.
\]
Now, we fix $M\geqslant  1$. Recall that $V_M\subset V_N$ for $N>M$.
Thus, for every $N>M$ and for every step function $v$ with values in $V_M$, we have that
\[
\begin{aligned}
&\displaystyle
\iint_{\Omega_T} \partial_t u^N v\,\d x\,\d t = - \iint_{\Omega_T}  f_{\eps,\delta,\gamma}(u^N)  \nabla p^N \cdot  \nabla v \,\d x\,\d t,\\
&\displaystyle \iint_{\Omega_T} p^N v \,\d x\,\d t =   \iint_{\Omega_T} v(-\Delta)^s u^N \,\d x\,\d t.
\end{aligned}
\]
Therefore, the above convergences readily imply that
 $u_{\eps,\delta,\gamma}\coloneqq u$ and $p_{\eps,\delta, \gamma}\coloneqq p$ verify
\begin{align}\label{eq:wf}
\iint_{\Omega_T} \partial_t  u_{\eps,\delta,\gamma}v\,\d x\,\d t  &= - \iint_{\Omega_T} f_{\eps,\delta,\gamma}(u_{\eps,\delta,\gamma})  \nabla p_{\eps,\delta, \gamma} \cdot \nabla v \,\d x\,\d t,\\
\iint_{\Omega_T} p_{\eps,\delta, \gamma} v \,\d x \,\d t &=   \iint_{\Omega_T}  v(-\Delta)^s u_{\eps,\delta,\gamma}\,\d x\,\d t\notag
\end{align}
for every step function $v$ with values in $V_M$, and thus for every test function with values in $\bigcup_{N=1}^{+\infty}V_N$.
Since this union is dense in $H^1(\Omega)$, we have that the above relations hold for any test function $v\in L^2((0,T);H^1(\Omega))$. This means that the couple
$(u_{\eps,\delta, \gamma}, p_{\eps,\delta,\gamma})$ is a solution of \cref{apr-001}.

Note that mass is still conserved. In fact, for any fixed $t\in(0,T]$ taking $v(x,\tau)=\chi_{[0,t]}(\tau)$ in the equation above we obtain
\begin{equation}
\label{eq:mass_cons_approx}
\int_{\Omega}u_{\eps,\delta,\gamma}(x,t)\,\d x = \int_{\Omega} u_{0,\eps,\delta,\gamma}(x) \,\d x \qquad\text{for all } t\in (0,T].
\end{equation}
Moreover, the couple $(u_{\eps,\delta, \gamma}, p_{\eps,\delta,\gamma})$ verifies the energy identity.
Indeed, for fixed $t\in (0,T]$ using $v(x,\tau)=p_{\eps,\delta,\gamma}(x,\tau)\chi_{[0,t]}(\tau)$
in the equation \cref{apr-001}, using Fubini's theorem, \cref{ip} and \cref{charL2}, we obtain
\begin{equation}
\label{eq:energy_esti1}
\norm{u_{\eps,\delta, \gamma}(\cdot,t)}^{2}_{\dot{H}^s(\Omega)}
+ 2 \iint_{\Omega_t}  f_{\eps,\delta,\gamma}(u_{\eps,\delta,\gamma}) \abs{\nabla p_{\eps,\delta,\gamma}}^2 \,\d x \,\d \tau
= \norm{u_{0,\eps,\delta,\gamma}}^2_{\dot{H}^s(\Omega)} \qquad\text{for all } t\in (0,T].
\end{equation}

\subsection{Limit for \texorpdfstring{$\gamma\to 0$}{gamma tending to zero}}

\subsubsection{Entropy estimate and its consequences}
\label{sssec:entropy}

Starting from the regularized mobility $f_{\eps,\delta,\gamma}$ defined in \cref{defregmob}, we define the positive function $G_{\eps,\delta,\gamma}:\R\to \R$ (the regularized entropy)
as the unique function satisfying
\begin{equation}\label{defGepsdeltagamma}
 	G''_{\eps,\delta,\gamma}(z)= \frac{1}{f_{\eps,\delta,\gamma}(z)} \qquad\text{for all }z\in \R, \qquad G'_{\eps,\delta,\gamma}(1)=G_{\eps,\delta,\gamma}(1)=0.
 \end{equation}
 Starting from $f_{\eps,\delta}$, we define
 the function $G_{\eps,\delta}:\R\to [0,+\infty]$ satisfying
$$
 	G''_{\eps,\delta}(z)= \frac{1}{f_{\eps,\delta}(z)} \qquad\text{for all } z\in (0,+\infty), \qquad G'_{\eps,\delta}(1)=G_{\eps,\delta}(1)=0,
$$
and
\begin{equation}\label{pos}
    G_{\eps,\delta}(z)=+\infty \qquad\text{for all } z\in (-\infty,0].
\end{equation}
A simple computation yields the following explicit expression:
\begin{equation}\label{diff_G}
	G_{\eps,\delta}(z)=G_0(z)+\frac{\eps}{\alpha-1}\bigl(\frac{z^{2-\alpha}}{\alpha-2} -\frac{1}{\alpha-2} +z-1\bigr)
	+ \frac{\delta}{2}(z^2 - 1 -2z+2) \qquad \text{for }z\in (0,+\infty).
\end{equation}

 From the previous definition, we deduce 
 \begin{equation}\label{convGedg}
	\lim_{\gamma\to 0}G_{\eps,\delta,\gamma}(z) = G_{\eps,\delta}(z)   \qquad \text{for all } z\in \R.
\end{equation}
 Consequently, using \cref{H1c} and \cref{H1p}, we obtain that
 \begin{equation}
 \label{eq:initial_entropy1}
 \lim_{\gamma\to 0}\int_{\Omega}G_{\eps,\delta,\gamma}(u_{0,\eps,\delta,\gamma})\,\d x = \int_{\Omega}G_{\eps,\delta}(u_{0,\eps,\delta})\,\d x.
 \end{equation}
Since $u_{\eps,\delta,\gamma}\in L^2((0,T);H^{2s+1}(\Omega))$, we have, in particular, that $u_{\eps,\delta,\gamma}\in L^2((0,T);H^{1}(\Omega))$.
 By \cref{boundedf} we have $G''_{\eps,\delta,\gamma}\in L^{\infty}(\R)$. Consequently $G'_{\eps,\delta,\gamma}$ is Lipschitz continuous and
 therefore $G'_{\eps,\delta,\gamma}(u_{\eps,\delta,\gamma})\in L^2((0,T);H^{1}(\Omega))$.
 For fixed $t\in(0,T]$ we can take $v(x,\tau) = G'_{\eps,\delta,\gamma}(u_{\eps,\delta,\gamma}(x,\tau)) \chi_{[0,t]}(\tau)$ in \cref{apr-001},
we obtain the entropy identity
\begin{equation}\label{ap-04}
\begin{aligned}
& \int_{\Omega}G_{\eps,\delta,\gamma}(u_{\eps,\delta,\gamma}(x,t)) \,\d x   +   \int_{0}^t \norm{u_{\eps,\delta,\gamma}}^2_{\dot H^{ s+1} (\Omega)} \,\d \tau =
 \int_{\Omega}G_{\eps,\delta,\gamma}(u_{0,\varepsilon,\delta,\gamma}) \,\d x
 \end{aligned}
\end{equation}
for all $t\in (0,T]$. By the entropy identity \cref{ap-04}, the conservation of mass \cref{eq:mass_cons_approx} and the equivalence \cref{eq:equiv}, and the convergence \cref{eq:initial_entropy1}, there exists a constant $C$ independent of $\gamma$ such that
\begin{equation}
	 \int_{0}^T \norm{u_{\eps,\delta,\gamma}}^2_{H^{ s+1} (\Omega)} \,\d t \leqslant   C.
\end{equation}
Analogously, from the energy identity \cref{eq:energy_esti1},  the conservation of mass \cref{eq:mass_cons_approx} and the equivalence \cref{eq:equiv}, the convergence \cref{H1c}, \cref{Hsc},
there exists a constant $C$ independent of $\eps$, $\delta$, and $\gamma$ such that
\begin{equation}\label{Hsbound}
	 \norm{u_{\eps,\delta,\gamma}(\cdot,t)}^2_{H^{s} (\Omega)} \leqslant   C \qquad\text{for all } t\in [0,T].
\end{equation}

By the above estimates and observing that
$$\norm{u_{\eps,\delta,\gamma}}^2_{\dot H^{ s+1} (\Omega)}= \norm{p_{\eps,\delta,\gamma}}^2_{\dot H^{ 1-s} (\Omega)},$$
there exist $u_{\eps,\delta}$ and $p_{\eps,\delta}$ and a subsequence of $\gamma$ (not relabelled) such that
\begin{align}
& u_{\eps,\delta,\gamma}\xrightarrow{\gamma \to 0}u_{\eps,\delta} \qquad \textrm{ weakly-$\ast$ in } L^{\infty}((0,T);H^{s}(\Omega)),\\
& u_{\eps,\delta,\gamma}\xrightarrow{\gamma\to 0}u_{\eps,\delta}\qquad \textrm{ weakly in } L^2((0,T);H^{1+s}(\Omega)),\label{Hconv} \\
& p_{\eps,\delta,\gamma}\xrightarrow{\gamma\to 0} p_{\eps,\delta}\qquad \textrm{ weakly in } L^2((0,T); H^{1-s}(\Omega))\label{eq:convpgamma}.
\end{align}
In particular, we have that
\[
p_{\eps,\delta} = (-\Delta)^s u_{\eps,\delta}\qquad \textrm{almost everywhere in } \Omega_T.
\]
As a consequence of \cref{eq:convpgamma}, we also have 
\begin{align}
\label{eq:convpgamma-r}	p_{\eps,\delta}&\xrightarrow{\eps,\delta\to 0}p\qquad \textrm{strongly in } L^2((0,T);H^{1-r}(\Omega)) \text{ for all } r\in (s,1],
\end{align}
and, having fixed $r\in(s,1)$, we can use \cref{p1conv} and the embedding \cref{emb1} to deduce
\begin{equation}\label{p1convga2}
 	p_{\eps,\delta}\xrightarrow{\eps,\delta\to 0}p\qquad \textrm{strongly in } L^2((0,T);L^{q'}(\Omega)),
\end{equation}
where $q'=\frac{2d}{d+2(r-1)}>2$.

Moreover, the energy estimate \cref{eq:energy_esti1} gives that
\begin{align}\label{boundflux--2}
\sqrt{f_{\eps,\delta,\gamma}(u_{\eps,\delta,\gamma})}\nabla p_{\eps,\delta,\gamma} \qquad \textrm{ is uniformly bounded w.\,r.\,t.~}\eps, \delta, \gamma \textrm{ in } L^2(\Omega_T).
\end{align}
Consequently, since $\gamma \leqslant   {f_{\eps,\delta,\gamma}(u_{\eps,\delta,\gamma})}\leqslant   \gamma + \frac{1}{\delta}$, we get that
\begin{equation}\label{boundfluxL2}
	f_{\eps,\delta,\gamma}(u_{\eps,\delta,\gamma})\nabla p_{\eps,\delta,\gamma} \qquad \textrm{ is uniformly bounded w.\,r.\,t.~} \gamma \textrm{ in } L^2(\Omega_T) \quad \textrm{for } \gamma \leqslant  1.
\end{equation}
We stress that the above estimate is not uniform in $\delta$.
Then, using the equation \cref{apr-001}, we get
\begin{equation}\label{bounddtugamma}
	\partial_t u_{\eps,\delta,\gamma} \qquad \textrm{ is uniformly bounded w.\,r.\,t.~}\gamma \textrm{ in } L^2((0,T); (H^{1}(\Omega))^*).
\end{equation}
By Aubin--Lions' compactness lemma (see  \cite{simon87}), we obtain that
\begin{align}
\label{eq:stronguepsdelta}
u_{\eps,\delta,\gamma}&\xrightarrow{\gamma\to 0}u_{\eps,\delta} \qquad \textrm{ strongly in } L^2((0,T); H^{1+s-\eta}(\Omega)) \text{ for all }\eta\in (0,s],
\\ 
\label{strongC}
u_{\eps,\delta,\gamma}&\xrightarrow{\gamma\to 0}u_{\eps,\delta} \qquad   \text{ strongly in  $C((0,T) ; L^p(\Omega) )$  for all   $p < \frac{2d} {d - 2s}$ }.
\end{align}
In particular, 
\begin{align}
\label{eq:ae-g-u}
 u_{\eps,\delta,\gamma}&\xrightarrow{\gamma\to 0}u_{\eps,\delta} && \textrm{ a.\,e.~in } \Omega_T, \\
\label{eq:ae-g-gradu}
\nabla u_{\eps,\delta}&\xrightarrow{\gamma\to 0}\nabla u && \textrm{ in }  L^2(\Omega_T)  \textrm{ and a.\,e.~in } \Omega_T.
\end{align}

Using  \cref{strongC}, we deduce that,  for all $t \in (0,T)$,
\begin{equation}\label{eq:convfgamma}
 			f_{\eps,\delta,\gamma}(u_{\eps,\delta,\gamma}(\cdot,t)) \xrightarrow{\gamma\to 0} f_{\eps,\delta}(u_{\eps,\delta}(\cdot,t)) \qquad \textrm{strongly in } L^{2}(\Omega).
\end{equation}

Using \cref{strongC}, we can also pass to the limit in \cref{eq:mass_cons_approx} obtaining the conservation of mass
\begin{equation}
\label{mass_cons2}
\int_{\Omega}u_{\eps,\delta}(x,t)\,\d x = \int_{\Omega} u_{0,\eps,\delta}(x) \,\d x \qquad\text{for all }t\in (0,T].
\end{equation}

We show that $u_{\eps,\delta}$ satisfies the entropy inequality.
First of all, we observe that if $0<\gamma_1<\gamma_2$ then
$G_{\eps,\delta}(z) > G_{\eps,\delta, \gamma_1}(z) >G_{\eps,\delta, \gamma_2}(z)$ for any $z\in \R$.
Then, for any fixed $\tilde\gamma>0$, by Fatou's lemma, the previous inequality, and \cref{strongC}, we have
\begin{align*}
\liminf_{\gamma\to 0}\int_{\Omega}G_{\eps,\delta, \gamma}(u_{\eps,\delta,\gamma}(x,t))\,\d x
 &\geqslant  \liminf_{\gamma\to 0}\int_{\Omega}G_{\eps,\delta,\tilde\gamma}(u_{\eps,\delta,\gamma}(x,t))\,\d x \geqslant  \int_{\Omega}G_{\eps,\delta,\tilde\gamma}(u_{\eps,\delta}(x,t))\,\d x
\end{align*}
for all $t\in (0,T]$. Using the monotone convergence theorem and the previous inequality, we obtain that
\[
\begin{aligned}
	&\liminf_{\gamma\to 0}\int_{\Omega}G_{\eps,\delta, \gamma}(u_{\eps,\delta,\gamma}(x,t))\,\d x
\geqslant  \int_{\Omega}G_{\eps,\delta}(u_{\eps,\delta}(x,t))\,\d x  \qquad\text{for all } t\in (0,T].
 \end{aligned}
\]
Using the last inequality, the semi-continuity of norms with respect to weak convergence \cref{Hconv},
the convergence \cref{eq:initial_entropy1}, from \cref{ap-04} we obtain that
\begin{equation}
\label{eq:entropy_approx2}
\begin{aligned}
 &\int_{\Omega}G_{\eps,\delta}(u_{\eps,\delta}(x,t)) \,\d x   +   \int_{0}^t \norm{u_{\eps,\delta}(\cdot,r)}^2_{\dot H^{ s+1} (\Omega)} \,\d r \leqslant  
 \int_{\Omega}G_{\eps,\delta}(u_{0,\eps,\delta}(x)) \,\d x 
 \end{aligned}
\end{equation}
for all $t\in (0,T]$.
Using the inequality \cref{eq:entropy_approx2} and \cref{pos}, we can show that 
\begin{equation}
\label{eq:positivity_approx}
\qquad u_{\eps,\delta}(x,t) \geqslant  0 \qquad \textrm{   in } {\Omega_T}
\end{equation}
(we refer to \cite[Proof of Theorem 1.2]{Grun95} for further details on this argument).
In particular, by \cref{eq:entropy_approx2},  recalling \cref{diff_G}, we have 
\[
\int_{\Omega} u_{\eps,\delta}^{2-\alpha} (x,t) \, \mathrm dx  \leqslant C \quad \text{for all } t\in (0,T], \quad \text{with } \alpha > 2.
\]
Then 
\begin{equation}
\label{eq:positivity_approx_bis}
\mathscr{L}^d\left(\{ x\in \Omega: u_{\eps, \delta} (x,t) = 0 \}\right) = 0  \quad \text{for all } t\in (0,T].
\end{equation}

\subsubsection{Limit \texorpdfstring{$\gamma \to 0$}{for gamma tending to zero} in the weak formulation}

In order to pass to the limit in the weak formulation \cref{apr-001} of the equation, we fix
$v\in C^{\infty}_c(\bar{\Omega}_T)$ such that $\nabla v\cdot {\bf n} =0 $ on $(0,T)\times \partial\Omega$ and rewrite the right- and left-hand sides as follows: 
\begin{align}\label{nlpartg}
	-\iint_{\Omega_T}{ f_{\eps,\delta,\gamma }(u_{\eps,\delta,\gamma } )  \nabla p_{\eps,\delta,\gamma }   \cdot\nabla v  \,\d x\, \d t}
	&= \iint_{\Omega_T}p_{\eps,\delta,\gamma}\nabla\left(f_{\eps,\delta,\gamma}(u_{\eps,\delta,\gamma})\right)\cdot\nabla v\,\d x\,\d t
 \\ & \qquad + \iint_{\Omega_T}f_{\eps,\delta,\gamma}(u_{\eps,\delta,\gamma})p_{\eps,\delta,\gamma}\Delta v\,\d x\,\d t, \notag
\\ 
\label{lpartg}
	\int \limits_{0}^T { \langle \partial_t u_{\eps,\delta } , v \rangle_{H^1(\Omega)^*, H^1(\Omega)} \,\d x}
	&= -\iint_{\Omega_T} u_{\eps,\delta,\gamma}\partial_t v\,\d x\,\d t -\int_{\Omega}u_{0,\eps,\delta,\gamma}(x)v(x,0)\,\d x.
\end{align}

We can pass to the limit in the right-hand side of \cref{nlpartg} using \cref{p1convga2}, \crefrange{eq:ae-g-u}{eq:convfgamma}. On the other hand, we can pass to the limit in \cref{lpartg} using \cref{strongC}. In conclusion, we have 
\begin{align}\label{apr-002a}
\begin{aligned}
	\int_{0}^T { \langle \partial_t u_{\eps,\delta } , v \rangle_{H^1(\Omega)^*, H^1(\Omega)} \,\d t}
		&= \iint_{\Omega_T}J_{\varepsilon,\delta}(u_{\varepsilon, \delta})\cdot {\nabla v} \,\d x\,\d t,
 \end{aligned}
\end{align}
{where $J_{\eps,\delta}$ is defined weakly as 
\begin{align}\label{apr-002b}
\begin{aligned}
	\iint_{\Omega_T}J_{\varepsilon,\delta}(u_{\varepsilon, \delta})\cdot V \,\d x\,\d t
		&= -\iint_{\Omega_T}p_{\eps,\delta}\nabla\left(f_{\eps,\delta}(u_{\eps,\delta})\right)\cdot V\,\d x\,\d t
 \\ & \qquad - \iint_{\Omega_T}f_{\eps,\delta}(u_{\eps,\delta})p_{\eps,\delta}\,\textrm{div} V\,\d x\,\d t
 \end{aligned}
\end{align}
for all $V \in L^{2}((0,T); H^{1}(\Omega;\R^d))$, and
$$
	  p_{\eps,\delta }(\cdot,t)  =  (-\Delta)^s u_{\eps,\delta} (\cdot,t)  \qquad \text{for a.\,e.~$t\in (0,T)$}.
$$
}

\subsubsection{Limit \texorpdfstring{$\gamma \to 0$}{for gamma tending to zero} in the energy identity and identification of the flux $J_{\eps,\delta}$}
{
In this Subsection we pass to the limit in the energy identity \cref{eq:energy_esti1} and we identify the flux $J_{\eps,\delta}$ in terms of $u_{\eps,\delta}$ and $p_{\eps,\delta}$ in the region where $u_{\eps,\delta}>0$. 

First of all, thanks to \cref{boundflux--2}, we deduce that 
\begin{equation}
\label{eq:energy_esti2a}
	\norm{u_{\eps,\delta}(\cdot,t)}^{2}_{\dot{H}^s(\Omega)}
	+ 2 \iint_{\Omega_t}  g_{\varepsilon, \delta}^2 \,\d x \,\d \tau
	\leqslant   \norm{u_{0,\eps,\delta}}^2_{\dot{H}^s(\Omega)} \qquad\text{for all } \,t\in (0,T],
\end{equation}
where  $g_{\eps,\delta}\in L^2(\Omega_T;\R^d)$ satisfies
\begin{equation}\label{u1conv4g}
 	f^{1/2}_{\eps,\delta,\gamma}(u_{\eps,\delta,\gamma})\nabla p_{\eps,\delta,\gamma}\xrightarrow{\gamma\to 0} g_{\eps,\delta} \qquad \textrm{weakly in } L^2((0,T);L^{2}(\Omega;\R^d)).
\end{equation}
 
 The definition of $f_{\eps,\delta,\gamma}$ yields  
 \[
 \iint_{\Omega_T} {f_{\eps,\delta, \gamma}(u_{\eps,\delta,\gamma}) |\nabla p_{\eps,\delta,\gamma}|^2 \, \mathrm dx \, \mathrm dt  } =  
 \underbrace{\gamma \iint_{\Omega_T}  |\nabla p_{\eps,\delta,\gamma}|^2 \, \mathrm  dx \, \mathrm dt  }_{\geqslant 0} +
  \iint_{\Omega_T} {f_{\eps,\delta}(u_{\eps,\delta,\gamma}) |\nabla p_{\eps,\delta,\gamma}|^2  \, \mathrm dx \, \mathrm dt  }.
 \]
 
 Now we work on the second term. To this end, we fix $\phi \in C_c^\infty (\bar{\Omega}_T;\R^d)$ and argue as follows. For $\mu>0$, we set $P_{\mu}\coloneqq \left\{(x,t)\in \Omega_T: \, u_{\eps,\delta}(x,t)>\mu\right\}$ and $N_{\mu}\coloneqq \Omega_T\setminus P_{\mu}$.
Since $\mathscr{L}^{d}\left(\{x\in \Omega: u_{\eps, \delta}(x,\cdot) = 0 \}\right)  = 0$ for all~$t > 0$ (which implies that $\mathscr{L}^{d+1}\left(\{ (x,t)\in \Omega_T:u_{\eps, \delta}(x,t) = 0 \}\right)=0$),
 we can choose the family $P_{\mu}$ to be monotonically increasing with respect to $\mu$. 
 More precisely, we can select a sequence $\mu_n\searrow 0$ when $n\to +\infty$ such that $\mu_{n+1}\le \mu_{n}$ 
 and $P_{\mu_{n}}\subseteq P_{\mu_{n+1}}$. Note that 
 \begin{equation}
 \label{eq:positivity_set}
 \left\{(x,t)\in \Omega_T: u_{\eps,\delta}(x,t)>0\right\} = \bigcup_{n=1}^{+\infty}\left\{(x,t)\in \Omega_T: u_{\eps,\delta}(x,t)>\mu_n\right\}. 
 \end{equation}
 As a result 
 \[
 \mathscr{L}^{d+1}\left(\left\{(x,t)\in \Omega_T: u_{\eps,\delta}(x,t)>0\right\}\right)=\lim_{n\to +\infty}\mathscr{L}^{d+1}\left(\left\{(x,t)\in \Omega_T: u_{\eps,\delta}(x,t)>\mu_n\right\}\right).
 \]
 Thus, since we have that $\mathscr{L}^{d+1}\left(\left\{(x,t)\in \Omega_T: u_{\eps,\delta}=0\right\}\right)=0$, we readily get 
\begin{equation}
\label{eq:positivity_small}
\lim_{n\to +\infty}\mathscr{L}^{d+1}(N_{\mu_n})=0.
\end{equation} 
We have that $u_{\eps,\delta,\gamma}\xrightarrow{\gamma\to 0}u_{\eps,\delta}$ almost everywhere in $\Omega_T$ (and, a fortiori, in $P_{\mu_n}$). 
Therefore, owing to Severini--Egorov's theorem,  there exists a monotonically increasing sequence of compact sets $K_{\lambda}\subset P_{\mu_n}$ such that 
\begin{enumerate}
\item $\abs{P_{\mu_n}\setminus K_\lambda}\le \lambda$;
\item $K_{\lambda}\subset K_{\lambda'}$ for $0<\lambda'<\lambda$;
\item $u_{\eps,\delta,\gamma}\xrightarrow{\gamma\to 0} u_{\varepsilon,\delta}$ uniformly in $K_{\lambda}$;
\item $u_{\eps,\delta,\gamma}\ge \mu_n$ in $K_{\lambda}$.
\end{enumerate}  
Therefore, as $z \mapsto f_{\eps,\delta}(z)$ is monotonically increasing, we have that $f_{\eps,\delta}(u_{\varepsilon,\delta})\ge f_{\eps,\delta}(\mu_n)$ on $K_{\lambda}$ and thus
\[
f_{\eps,\delta}(\mu_n)\iint_{K_{\lambda}}\abs{\nabla p_{\varepsilon,\delta,\gamma}}^2 \, \d x \, \d t \le C, \qquad \textrm{ uniformly in } \gamma. 
\]

As a result, we have that 
\[
\nabla p_{\eps,\delta,\gamma}\xrightarrow{\gamma \to 0}\nabla p_{\eps,\delta} \qquad \textrm{ weakly in } L^2(K_{\lambda}).
\]
Summing up,

\begin{align*}
&\lim_{\gamma\to 0}\iint_{P_{\mu_n}}f_{\eps,\delta}^{1/2}(u_{\varepsilon,\delta,\gamma})\nabla p_{\varepsilon,\delta,\gamma}\cdot\phi\, \d x \, \d t \\ & = \lim_{\gamma\to 0}\left(\iint_{K_{\lambda}}f_{\eps,\delta}^{1/2}(u_{\varepsilon,\delta,\gamma})\nabla p_{\varepsilon,\delta,\gamma}\cdot\phi\, \d x \, \d t
+\iint_{P_{\mu_n}\setminus K_\lambda}f_{\eps,\delta}^{1/2}(u_{\varepsilon,\delta,\gamma})\nabla p_{\varepsilon,\delta,\gamma}\cdot\phi\, \d x \, \d t\right)\\
& = \iint_{K_{\lambda}}f_{\eps,\delta}^{1/2}(u_{\eps,\delta})\nabla p_{\eps,\delta}\cdot\phi\d x\d t + \iint_{P_{\mu_n}\setminus K_\lambda}g_{\eps,\delta}\cdot\phi\, \d x \, \d t.
\end{align*}

As the sequence $\lambda\mapsto K_\lambda$ is monotone increasing, we use the Monotone Convergence Theorem 
\footnote{~To be precise, we apply the monotone convergence theorem by working separately on the set of those  $(x,t)\in P_{\mu_n}$ 
for which $\nabla p_{\eps,\delta}\cdot \phi\le 0$ and on the set in which $\nabla p_{\eps,\delta}\cdot\phi > 0$. }
(w.\,r.\,t.~$\lambda$) and we obtain 
\[
\lim_{\lambda\to 0}\int_{K_\lambda}f_{\eps,\delta}^{1/2}(u_{\eps,\delta})\nabla p_{\eps,\delta}\cdot\phi\d x\d t = 
\int_{P_{\mu_n}}f_{\eps,\delta}^{1/2}(u_{\eps,\delta})\nabla p_{\eps,\delta}\cdot\phi\d x\d t.
\]
On the other hand,
\[
\lim_{\lambda\to 0}\iint_{P_{\mu_n}\setminus K_\lambda}g_{\eps,\delta}\cdot\phi\, \d x \, \d t=0. 
\]
Therefore,
\[
\lim_{\gamma\to 0}\iint_{P_{\mu_n}}f_{\varepsilon,\delta,\gamma}^{1/2}(u_{\varepsilon,\delta,\gamma})\nabla p_{\varepsilon,\delta,\gamma}\cdot\phi\, \d x \, \d t = \iint_{P_{\mu_n}}f_{\eps,\delta}^{1/2}(u_{\eps,\delta})\nabla p_{\eps,\delta}\cdot\phi\,\d x\,  \d t. 
\]
In the region $N_{\mu_n}$, we estimate 
\begin{align*}
 \iint_{  N_{\mu_n} }  f^{1/2}_{\eps,\delta}(u_{\eps,\delta,\gamma}) |\nabla p_{\eps,\delta,\gamma}  \cdot \phi | \, \mathrm dx \, \mathrm dt  & \leqslant   
\left(  \iint_{ \Omega_T}  f_{\eps,\delta}(u_{\eps,\delta,\gamma}) | \nabla p_{\eps,\delta,\gamma} |^2   \, \mathrm dx \, \mathrm dt \right) ^{1/2}
 \left(\iint_{  N_{\mu_n} }  \phi^2 \, \mathrm dx \, \mathrm dt \right)^{1/2}  \\ &\leqslant 
 C \sqrt{\mathscr{L}^{d+1}(N_{\mu_n})}.
\end{align*}
Thus, we conclude that 
\[
\lim_{\gamma \to 0}\iint_{\Omega_T}f_{\varepsilon,\delta,\gamma}^{1/2}(u_{\varepsilon,\delta,\gamma})\nabla p_{\varepsilon,\delta,\gamma}\cdot\phi\, \d x \, \d t = \iint_{P_{\mu_n}}f_{\eps,\delta}^{1/2}(u_{\eps,\delta})\nabla p_{\eps,\delta}\cdot\phi\, \d x \, \d t + {o_{n\to +\infty}(1)}.
\]
Now, arguing again separately on the sets where $\nabla p_{\eps,\delta}\cdot\phi$ is positive and where it is negative, the Monotone convergence Theorem gives that 
\[
\lim_{n\to +\infty}\iint_{P_{\mu_n}}f_{\eps,\delta}^{1/2}(u_{\eps,\delta})\nabla p_{\eps,\delta}\cdot\phi\, \d x \, \d t = \iint_{\left\{u_{\eps,\delta}>0\right\}}f_{\eps,\delta}^{1/2}(u_{\eps,\delta})\nabla p_{\eps,\delta}\cdot\phi\, \d x \, \d t.
\] 
Therefore,
\[
\lim_{\gamma \to 0}\iint_{\Omega_T}f_{\eps,\delta}^{1/2}(u_{\eps,\delta,\gamma})\nabla p_{\varepsilon,\delta,\gamma}\cdot\phi\,  \d x \, \d t = \iint_{\left\{u_{\eps,\delta}>0\right\}}f_{\eps,\delta}^{1/2}(u_{\eps,\delta})\nabla p_{\eps,\delta}\cdot\phi\, \d x \, \d t.
\]
In conclusion, we can identify 
\[
g_{\eps,\delta}=
\begin{cases}
f_{\eps,\delta}^{1/2}(u_{\eps,\delta})\nabla p_{\eps,\delta} &\qquad \textrm{ on } \,\left\{(x,t) \in \Omega_T: \, u_{\eps,\delta}(x,t) >0\right\},\\
0&\qquad \textrm{ on } \left\{(x,t) \in \Omega_T:\, u_{\eps,\delta}(x,t)=0\right\}.
\end{cases}
\]

As a result \cref{eq:energy_esti2a} becomes 
\begin{equation}
\label{eq:energy_esti2}
	\norm{u_{\eps,\delta}(\cdot,t)}^{2}_{\dot{H}^s(\Omega)}
	+ 2 \int_0^t\int_{\{x\in\Omega:u_{\eps,\delta}(x,\tau) >0\}}  f_{\eps,\delta}(u_{\eps,\delta}) \abs{\nabla p_{\eps,\delta}}^2 \,\d x \,\d \tau
	\leqslant   \norm{u_{0,\eps,\delta}}^2_{\dot{H}^s(\Omega)} 
\end{equation}
for all  $t\in (0,T]$.
A similar argument allows to obtain informations on the flux $J_{\eps,\delta}$ and finally get 
\begin{equation}
\label{eq:flux_approx}
 J_{\eps, \delta}(u_{\eps,\delta}) \coloneqq 
\begin{cases}
 f_{\eps,\delta} (u_{\eps, \delta} ) \nabla p_{\eps, \delta} & \text{ in } \{ u_{\eps, \delta}  > 0 \},\\
0 & \text{ in } \{  u_{\eps, \delta} = 0 \}.
\end{cases} 
\end{equation}
}

Therefore, we can write the weak formulation as
\begin{equation}\label{apr-002}
	{\int_{0}^T { \langle \partial_t u_{\eps,\delta } , v \rangle_{H^1(\Omega)^*, H^1(\Omega)} \,\d t}}
	{= - \iint_{\{u_{\eps,\delta}>0\}}{ f_{\eps,\delta }(u_{\eps,\delta } )  \nabla p_{\eps,\delta }   \cdot\nabla v  \,\d x\, \d t}}
\end{equation}
for all $v\in C^{\infty}_c({\overline\Omega}\times [0,T))$ such that $\nabla v\cdot {\bf n} =0 $ on $(0,T)\times \partial\Omega$.

\subsection{Limits for \texorpdfstring{$\eps, \delta\to 0$}{epsilon and delta tending to zero} }

\subsubsection{A priori estimates and compactness}

Recalling \cref{diff_G}, we get
\[
G_{\varepsilon,\delta} (z) - G_{0} (z) = \frac{\varepsilon}{(\alpha -1)(\alpha-2)}(z^{2-\alpha} -1 ) +
(\frac{\varepsilon}{ \alpha -1 } - \delta )(z-1) + \frac{\delta}{2}(z^2 -1),
\]
and, using \cref{H1p},
\[
\int_{\Omega} { |G_{\varepsilon,\delta} (u_{0,\varepsilon,\delta}) -
 G_{0} (u_{0,\varepsilon,\delta}) | \,\d x } \leqslant   C \varepsilon^{1 - \theta_1(\alpha -2)}.
\]
Using \cref{Hsc}, we obtain that
\begin{equation}\label{convid}
\lim_{(\eps,\delta)\to (0,0)}\int_{\Omega} {G_{\varepsilon,\delta} (u_{0,\varepsilon,\delta})  \,\d x }   =
\int_{\Omega} {  G_{0} (u_{0})  \,\d x }.
\end{equation}

By the entropy estimate \cref{eq:entropy_approx2}, the conservation of mass \cref{mass_cons2} and the equivalence \cref{eq:equiv},
and the convergence \cref{convid}, there exists a constant $C$ independent of $\eps$, $\delta$
such that
\begin{equation}\label{boundHsplus1}
	 \int_{0}^T \norm{u_{\eps,\delta}(\cdot,t)}^2_{H^{ s+1} (\Omega)} \,\d t \leqslant   C.
\end{equation}
Analogously, from the energy identity \cref{eq:energy_esti1},  the conservation of mass \cref{eq:mass_cons_approx} and the equivalence \cref{eq:equiv}, and the convergence
 \cref{Hsc},
there exists a constant $C$ independent of $\eps$, $\delta$ such that
\begin{equation}\label{boundHsepsdelta}
	 \norm{u_{\eps,\delta}(\cdot,t)}^2_{H^{s} (\Omega)} \leqslant   C \qquad\text{for all } t\in [0,T].
\end{equation}

The estimate of the flux $f_{\eps,\delta }(u_{\eps,\delta } )  \nabla p_{\eps,\delta }$, similar to \cref{boundfluxL2} is not available in $L^2(\Omega_T)$,
but, for $s<d/2$, the following bound holds
 \begin{equation}\label{boundfluxq'}
	 \| f_{\eps,\delta }(u_{\eps,\delta } )  \nabla p_{\eps,\delta } \|^2_{L^2((0,T); L^{q'}(\Omega))} \leqslant   C,
\end{equation}
where
$$ q'= \frac{4d}{2d + n (d-2s)} < 2$$
and the constant $C$ is independent of $\eps$ and $\delta$.
We observe that $q'>1$ if and only if $n < \frac{2d}{d-2s}$.
In order to obtain \cref{boundfluxq'}, we use \cref{emb1},
taking into account that
\begin{equation}\label{ineqf}
	 f_{\eps,\delta}(z)\leqslant   C z^n \qquad\text{for all } z\in [0,+\infty).
\end{equation}
Using \cref{ineqf} and H\"older's inequality, we have
\begin{align*}
& \int_0^T\Big|\int\limits_{\Omega}  |f_{\eps,\delta}(u_{\eps,\delta})\nabla p_{\eps,\delta} |^{q'} \,\d x\Big|^{2/q'} \,\d t\\
&\leqslant    \int_0^T \Big(\int\limits_{\Omega}  |\sqrt{f_{\eps,\delta}(u_{\eps,\delta})}|^{2q'/(2-q')}\,\d x\Big)^{(2-q')/q'}
\int\limits_{\Omega}  \Big|\sqrt{f_{\eps,\delta}(u_{\eps,\delta})}\Big|^2|\nabla p_{\eps,\delta} |^{2} \,\d x \,\d t\\
&\leqslant   C \int_0^T \Big(\int\limits_{\Omega}  |u_{\eps,\delta}|^{nq'/(2-q')}\,\d x\Big)^{(2-q')/q'}
\int\limits_{\Omega}  {f_{\eps,\delta}(u_{\eps,\delta})}|\nabla p_{\eps,\delta} |^{2} \,\d x \,\d t.
\end{align*}
Since $nq'/(2-q')={2d}/(d-2s)$, by \cref{emb1} and \cref{boundHsepsdelta} and \cref{eq:energy_esti2} we obtain \cref{boundfluxq'}.

In the case $s\geqslant  d/2$, hence for $d=1$ because $s<1$, we obtain the simpler estimate for $q'=2$, using the embeddings \cref{emb2} and \cref{emb3}.

Using  \cref{boundfluxq'} and the formulation \cref{apr-002}, we obtain that there exists a constant $C$ independent of $\eps$ and $\delta$ such that
\begin{equation}\label{boundut}
	\|\partial_t  u_{\eps,\delta }\|_{L^2((0,T); (W^{1,q}(\Omega))^*)} \leqslant   C.
\end{equation}

Using \cref{boundHsepsdelta}, \cref{boundfluxq'}, and \cref{boundut}, by Aubin--Lions' compactness lemma (see \cite{simon87}), we obtain that there exists  $u\in C([0,T],L^{p}(\Omega))$, for $p < \frac{2d}{d - 2s}$,  such that 
\begin{equation}\label{strongCu}
u_{\eps,\delta}\xrightarrow{\eps,\delta\to 0}u \qquad   \text{ strongly in } C([0,T]; L^{p}(\Omega)), 
\end{equation}
and
\begin{equation}\label{strongLu}
u_{\eps,\delta}\xrightarrow{\eps,\delta\to 0}u \qquad   \text{ strongly in } L^2((0,T); H^{s}(\Omega)).
\end{equation}
Moreover, by \cref{boundHsplus1} we have that
\begin{equation}\label{Hsplus1conv}
 	u_{\eps,\delta}\xrightarrow{\eps,\delta\to 0}u\qquad \textrm{weakly in } L^2((0,T);H^{1+s}(\Omega)).
\end{equation}

By the interpolation inequality \cref{interpsemi}, from \cref{strongLu} and \cref{Hsplus1conv} we obtain
\begin{equation}\label{Hstroconv}
 	u_{\eps,\delta}\xrightarrow{\eps,\delta\to 0}u\qquad \textrm{strongly in } L^2((0,T);H^{1+r}(\Omega)) \qquad \text{for all } r \in [0,s),
\end{equation}
and we can also obtain that
\begin{equation}\label{a.e.conv}
	 u_{\eps,\delta}\xrightarrow{\eps,\delta\to 0}u, \qquad  \nabla u_{\eps,\delta}\xrightarrow{\eps,\delta\to 0}\nabla u \quad\text{a.\,e.~in }\Omega_T.
\end{equation}

Then
\begin{align*}
	&u \in L^{\infty}((0,T); H^s(\Omega)) \cap L^{2}((0,T); H^{s+1}(\Omega)) \cap C([0,T]; L^{q'}(\Omega)),\\
  & u(\cdot,t)  \geqslant  0 \quad \text{ a.\,e.~in } \bar{\Omega}, \ \text{for all}\quad t\in [0,T],
\end{align*}
and
\begin{equation}
\label{mass_cons3}
\int_{\Omega}u(x,t)\,\d x = \int_{\Omega} u_{0}(x) \,\d x \qquad\text{for all } t\in (0,T].
\end{equation}
Taking into account that
$$\norm{u_{\eps,\delta}}^2_{\dot H^{ s+1} (\Omega)}= \norm{p_{\eps,\delta}}^2_{\dot H^{ 1-s} (\Omega)},$$
we obtain also that

\begin{align}
\label{pplus1conv}
p_{\eps,\delta}&\xrightarrow{\eps,\delta\to 0}p\qquad \textrm{ weakly in } L^2((0,T);H^{1-s}(\Omega)), \\ \label{p1conv}	p_{\eps,\delta}&\xrightarrow{\eps,\delta\to 0}p\qquad \textrm{strongly in } L^2((0,T);H^{1-r}(\Omega)) \quad \text{for all } r\in (s,1],
\end{align}
and $p=(-\Delta)^su$.

\subsubsection{Limits for \texorpdfstring{$\eps, \delta\to 0$}{epsilon and delta tending to zero} in the weak formulation}

In order to pass to the limit in the weak formulation \cref{apr-002} of the equation, we fix
$v\in C^{\infty}_c(\overline{\Omega}\times [0,T))$ such that $\nabla v\cdot {\bf n} =0 $ on $(0,T)\times \partial\Omega$.
Taking into account that $u_{\eps,\delta}=0$ if and only if $ f_{\eps,\delta }(u_{\eps,\delta})=0$, 
we rewrite the term
\begin{equation}\label{nlpart}
\begin{aligned}
	-\iint_{\Omega_T}{ J_{\eps,\delta}(u_{\eps,\delta })    \cdot\nabla v  \,\d x\, \d t}
	&= \iint_{\Omega_T}p_{\eps,\delta}\nabla\left(f_{\eps,\delta}(u_{\eps,\delta})\right)\cdot\nabla v\,\d x\,\d t
\\ & \quad + \iint_{\Omega_T}f_{\eps,\delta}(u_{\eps,\delta})p_{\eps,\delta}\Delta v\,\d x\,\d t,
 \end{aligned}
\end{equation}
and the term
\begin{equation}\label{lpart}
	\int_{0}^T { \langle \partial_t u_{\eps,\delta } , v \rangle_{H^1(\Omega)^*, H^1(\Omega)} \,\d x}
	= -\iint_{\Omega_T} u_{\eps,\delta}\partial_t v\,\d x\,\d t -\int_{\Omega}u_{0,\eps,\delta}(x)v(x,0)\,\d x.
\end{equation}

We fix $r\in(s,1)$. Using \cref{p1conv} and the embedding \cref{emb1} we obtain that
\begin{equation}\label{p1conv2}
 	p_{\eps,\delta}\xrightarrow{\eps,\delta\to 0}p\qquad \textrm{strongly in } L^2((0,T);L^{q'}(\Omega)),
\end{equation}
where $q'=\frac{2d}{d+2(r-1)}$ and satisfies $q'>2$.
The conjugate exponent of $q'$ is $q=\frac{2d}{d-2(r-1)}$.
We observe that  $q>1$ if and only if $r>1-d/2$.
Owing to H\"older's inequality, 
\begin{equation}
    \left(\int_\Omega |u_{\eps,\delta}^{n-1}\nabla u_{\eps,\delta}|^q\,\d x\right)^{1/q}\leqslant 
    \left(\int_\Omega |u_{\eps,\delta}^{n-1}|^{q_1}\,\d x\right)^{1/{q_1}}\left(\int_\Omega |\nabla u_{\eps,\delta}|^{q_2}\,\d x\right)^{1/{q_2}} ,
\end{equation}
where $\frac{1}{q_1}+ \frac{1}{q_2}= \frac{1}{q}$ and $q_2=\frac{2d}{d-2s}$.
It follows that $q_1=\frac{d}{1+s-r}$ and $(n-1)q_1\leqslant  \frac{2d}{d-2s}$
if $n<\frac{d+2(1-s)}{d-2s}$. 

Using the embedding \cref{emb1} and the bound in \cref{boundHsepsdelta}, we deduce
\begin{equation}
    \left(\int_\Omega |u_{\eps,\delta}^{n-1}\nabla u_{\eps,\delta}|^q\,\d x\right)^{1/q}\leqslant 
    C\left(\int_\Omega |\nabla u_{\eps,\delta}|^{q_2}\,\d x\right)^{1/{q_2}}.
\end{equation}
Since $f'_{\eps,\delta}(z)\leqslant  C z^{n-1}$ for any $z\geqslant 0$, from the embedding \cref{emb1} and \cref{boundHsplus1} we obtain that
\begin{equation}\label{l2qb}
    \int_0^T\left(\int_\Omega |f_{\eps,\delta}'(u_{\eps,\delta})\nabla u_{\eps,\delta}|^q\,\d x\right)^{2/q}\,\d t\leqslant     C.
\end{equation}
The last bound and the convergence \cref{a.e.conv} imply that
\begin{equation}\label{u1conv2}
 	f_{\eps,\delta}'(u_{\eps,\delta})\nabla u_{\eps,\delta}\xrightarrow{\eps,\delta\to 0}nu^{n-1}\nabla u \qquad \textrm{weakly in } L^2((0,T);L^{q}(\Omega)).
\end{equation}
Moreover, since $f_{\eps,\delta}(z)\leqslant  C z^{n}$ for any $z\geqslant 0$,
and $nq\leqslant  \frac{2d}{d-2s}$ if $n<\frac{d+2(1-s)}{d-2s}$,
using the embedding \cref{emb1} and the bound \cref{boundHsepsdelta},
we obtain that
\begin{equation}\label{eq:bl2q}
    \int_0^T\left(\int_\Omega |f_{\eps,\delta}(u_{\eps,\delta})|^q\,\d x\right)^{2/q}\,\d t\leqslant 
    C.
\end{equation}
The last bound and the convergence \cref{a.e.conv} imply that
\begin{equation}\label{u1conv3}
 	f_{\eps,\delta}(u_{\eps,\delta})\xrightarrow{\eps,\delta\to 0}u^{n} \qquad \textrm{weakly in } L^2((0,T);L^{q}(\Omega)).
\end{equation}
Using \cref{u1conv2}, \cref{u1conv3} and  \cref{p1conv2}, we can pass to the limit in \cref{nlpart} obtaining that
\begin{equation}\label{nlpartc}
\begin{aligned}
	&\lim_{\eps,\delta\to 0} \Bigl( \iint_{\Omega_T}p_{\eps,\delta}\nabla\left(f_{\eps,\delta}(u_{\eps,\delta})\right)\cdot\nabla v\,\d x\,\d t
 + \iint_{\Omega_T}f_{\eps,\delta}(u_{\eps,\delta})p_{\eps,\delta}\Delta v\,\d x\,\d t \Bigr)
 \\ &\quad = n\iint_{\Omega_T}pu^{n-1}\nabla u \cdot\nabla v\,\d x\,\d t
 + \iint_{\Omega_T}u^np\Delta v\,\d x\,\d t.
 \end{aligned}
\end{equation}
Using \cref{strongCu}, we  pass to the limit in \cref{lpart} obtaining that
\begin{equation}\label{lpartc}
	 \lim_{\eps,\delta\to 0}  \Bigl( \iint_{\Omega_T}u_{\eps,\delta}\partial_t v\,\d x\,\d t +\int_{\Omega}u_{0,\eps,\delta}(x)v(x,0)\,\d x \Bigr)
	= \iint_{\Omega_T} u\partial_t v\,\d x\,\d t +\int_{\Omega}u_{0}(x)v(x,0)\,\d x.
\end{equation}
By \cref{nlpartc} and \cref{lpartc}, we pass to the limit in the formulation \cref{apr-002} and we prove \cref{eq:weak_sol}.

\subsubsection{Limits for \texorpdfstring{$\eps, \delta\to 0$}{epsilon and delta tending to zero} in the entropy and energy estimates}

In order to prove the entropy inequality \cref{eq:entropy},
we fix $t\in (0,T]$ and,
by \cref{strongCu}, we can assume that
\begin{equation}\label{t.a.e.conv}
	 u_{\eps,\delta}(\cdot,t)\xrightarrow{\eps,\delta\to 0}u(\cdot,t) \quad\text{a.\,e.~in }\Omega.
\end{equation}
Starting from \cref{diff_G}, we obtain for any $z>0$ that
\begin{equation*}
	G_{\eps,\delta}(z)=G_0(z)+\frac{\eps}{(\alpha-1)(\alpha-2)}{z^{2-\alpha}} +\frac{\eps}{\alpha-1}z - \frac{\eps}{\alpha-2}
	+ \frac{\delta}{2}z^2 + \frac{\delta}{2} -\delta z,
\end{equation*}
and then
\begin{equation}\label{diff_G2}
	G_{\eps,\delta}(z)\geqslant G_0(z) - \frac{\eps}{\alpha-2}
	+ \frac{\delta}{2} -\delta z.
\end{equation}

Using \cref{diff_G2}, \cref{t.a.e.conv} and Fatou's lemma we obtain
\begin{equation}\label{lscGeps}
    \liminf_{\eps,\delta\to 0}\int_{\Omega}G_{\eps,\delta}(u_{\eps,\delta}(x,t))\,\d x\geqslant  \int_{\Omega}G_{0}(u(x,t))\,\d x.
\end{equation}
	
Since \cref{Hsplus1conv}, \cref{lscGeps} and \cref{convid} hold,
passing to the limit as $\eps,\delta\to 0$ in \cref{eq:entropy_approx2} we obtain
\begin{equation*}
    \int_\Omega G_0(u(x,t)) \,\d x + \int_0^t{\| u(\cdot,r)\|_{\overset{.}{H}^{s+1}(\Omega)}^2 \,\d r \leqslant   \int_\Omega G_0(u_0) \,\d x}.
\end{equation*}

We prove \cref{eq:ineq}. 
From \cref{eq:energy_esti2} for $t=T$ it follows that
\begin{equation}
\iint_{\Omega_T}\abs{f^{1/2}_{\eps,\delta}(u_{\eps,\delta})\nabla p_{\eps,\delta}}^2 \,\d x \,\d t \leqslant  C.
\end{equation}
Then there exists $g\in L^2(\Omega_T;\R^d)$ such that
\begin{equation*}
 	f^{1/2}_{\eps,\delta}(u_{\eps,\delta})\nabla p_{\eps,\delta}\xrightarrow{\eps,\delta\to 0} g \qquad \textrm{weakly in } L^2((0,T);L^{2}(\Omega;\R^d)).
\end{equation*}
By a lower-semicontinuity argument, we can pass to the limit in \cref{eq:energy_esti2}
obtaining \eqref{eq:ineq}. 

In order to identify $g$, let us fix $\phi \in C_c^\infty ({\Omega}_T;\R^d)$. 
By \cref{boundfluxq'}, there exists  $h\in L^2((0,T);L^{q'}(\Omega;\R^d))$ such that
\begin{equation}\label{u1conv5}
 	f^{1/2}_{\eps,\delta}(u_{\eps,\delta})f^{1/2}_{\eps,\delta}(u_{\eps,\delta})\nabla p_{\eps,\delta}\xrightarrow{\eps,\delta\to 0} h \qquad \textrm{weakly in } L^2((0,T);L^{q'}(\Omega;\R^d)).
\end{equation}
Therefore it follows that $h=u^{n/2}g$.
By \cref{nlpartc}, we have 
\begin{equation}\label{nlpartd}
\begin{aligned}
	-\iint_{\Omega_T}u^{n/2}g\cdot\phi\,\d x\,\d t 
 &= n\iint_{\Omega_T}pu^{n-1}\nabla u \cdot\phi\,\d x\,\d t+ \iint_{\Omega_T}u^np\div\phi\,\d x\,\d t.
 \end{aligned}
\end{equation}
Since $u^{n/2}\in L^2(\Omega_T)$ and $pu^{n-1}\nabla u\in L^1(\Omega_T)$, it follows that $\nabla\left(u^np\right)\in L^1(\Omega_T)$ and
$$\nabla\left(u^np\right)= u^{n/2}g + npu^{n-1}\nabla u.$$
Then \cref{eq:pseudo_flux_weak} holds.

Let us assume that $n\geqslant  2$. By \cref{eq:entropy} we have that $\int_\Omega G_0(u(x,t)) \,\d x <+\infty$ for any $t\in (0,T)$.
By the definition of $G_0$, we obtain \cref{positivityu}.

By \cref{eq:f'}, \cref{eq:f0} we observe that $f^{1/2}_{\eps,\delta}$ is Lipschitz continuous and that $ f'_{\eps,\delta }(0)=0$. 
As before,  let us fix $\phi \in C_c^\infty ({\Omega}_T;\R^d)$. 
Taking into account \cref{positivityu}, we can write
$$ \iint_{\Omega_T} \nabla(f^{1/2}_{\eps,\delta}(u_{\eps,\delta}))\cdot\phi \,\d x \,\d t = 
 \frac{1}{2}\iint_{\Omega_T}f^{-1/2}_{\eps,\delta}(u_{\eps,\delta})f^{'}_{\eps,\delta}(u_{\eps,\delta})\nabla u_{\eps,\delta}\cdot\phi \,\d x \,\d t .$$
Then we write
\begin{align*} \iint_{\Omega_T}f^{1/2}_{\eps,\delta}(u_{\eps,\delta})\nabla p_{\eps,\delta}\cdot\phi \,\d x \,\d t &=
-\iint_{\Omega_T} {f^{1/2}_{\eps,\delta}(u_{\eps,\delta})p_{\eps,\delta} \div\phi \,\d x \, \d t}\\ &\quad - \frac{1}{2}\iint_{\Omega_T} { f^{-1/2}_{\eps,\delta}(u_{\eps,\delta})f^{'}_{\eps,\delta}(u_{\eps,\delta})p_{\eps,\delta} \nabla u_{\eps,\delta} \cdot \phi \,\d x \,\d t}.
\end{align*}
We have to pass to the limit in the right-hand side of the last equality.
By \cref{eq:bl2q} and \cref{a.e.conv} we obtain that 
\begin{equation}
 	f^{1/2}_{\eps,\delta}(u_{\eps,\delta})\xrightarrow{\eps,\delta\to 0}u^{\frac{n}{2}} \qquad \textrm{weakly in } L^2((0,T);L^{q}(\Omega)),
\end{equation}
(and, owing to \cref{strongCu}, strongly in $C((0,T); L^p(\Omega)$ as well). By \cref{p1conv2}, we have 
\begin{equation}
    \lim_{\eps,\delta\to 0} \iint_{\Omega_T} {f^{1/2}_{\eps,\delta}(u_{\eps,\delta})p_{\eps,\delta} \div\phi \,\d x \, \d t}
    = \iint_{\Omega_T}{u^{\frac{n}{2}}p\div\phi\,\d x\,\d t}.
\end{equation}
In the same way of the proof of \cref{l2qb}, we prove that
\begin{equation}\label{l2qb'}
    \int_0^T\left(\int_\Omega |f^{-1/2}_{\eps,\delta}(u_{\eps,\delta})f_{\eps,\delta}'(u_{\eps,\delta})\nabla u_{\eps,\delta}|^q\,\d x\right)^{2/q}\,\d t\leqslant     C.
\end{equation}
Using the pointwise limit and the fact that $n\geqslant  2$, we prove that
\begin{equation}\label{u1conv4}
 	 f^{-1/2}_{\eps,\delta}(u_{\eps,\delta})f_{\eps,\delta}'(u_{\eps,\delta})\nabla u_{\eps,\delta}\xrightarrow{\eps,\delta\to 0}nu^{\frac{n}{2}-1}\nabla u \qquad \textrm{weakly in } L^2((0,T);L^{q}(\Omega)).
\end{equation}
By \cref{u1conv4} and  \cref{p1conv2}, we obtain
\begin{equation}
    \lim_{\eps,\delta\to 0} \frac{1}{2}\iint_{\Omega_T} { f^{-1/2}_{\eps,\delta}(u_{\eps,\delta})f^{'}_{\eps,\delta}(u_{\eps,\delta})p_{\eps,\delta} \nabla u_{\eps,\delta} \cdot \phi \,\d x\,\d t}
    = \frac{n}{2}\iint_{\Omega_T}{u^{\frac{n}{2}-1}p\nabla u \cdot\phi\,\d x\,\d t}.
\end{equation}
Then,
 $$ \iint_{\Omega_T}g\cdot\phi \,\d x \,\d t = -\iint_{\Omega_T} { u^{\frac{n}{2}}p \div\phi \,\d x \, \d t}
- \frac{n}{2}\iint_{\Omega_T} { u^{\frac{n}{2}-1} p \nabla u \cdot \phi \,\d x \,\d t}. $$

\subsubsection{Strict positivity}

Let us assume that  $n> 2 + \frac{2d}{2(s+1)-d}$.

By \cref{eq:entropy}, we have  
 \[
 	\|u(\cdot,t)\|_{H^{s+1}({\Omega})}<+\infty  \qquad \textrm{ for a.\,e.~} t \in (0,T).
 \]
Using the embedding \cref{emb4}, we deduce 
 \[
 	\|u(\cdot,t)\|_{C^{s+1 - d/2}(\overline{\Omega})}<+\infty  \qquad \textrm{ for a.\,e.~} t \in (0,T).
 \]
The last bound and the condition on $n$ imply
\cref{strictpositivity}.
Indeed, assuming that \cref{strictpositivity} does not hold, we fix $t\in(0,T)$ such that $\|u(\cdot,t)\|_{C^{s+1 - d/2}(\overline{\Omega})}<+\infty$, 
and $x_0 \in \overline\Omega$ such that  $u(x_0,t)=0$.
Then there exists a constant $C$ such that
\begin{equation}\label{holdercont}
	0\leqslant   u(x ,t) \leqslant   C |x - x_0|^{s+1 - d/2} \qquad\text{for all }x\in \overline\Omega.
\end{equation}
By the definition of $G_0$, we have 
$$
	G_{0}(u) \geqslant   \frac{1}{(n-2)(n-1)}\frac{1}{u^{n-2}}.
$$
Using the previous inequality and \cref{holdercont}, we obtain
\begin{equation}\label{disent}
	 \int_{\Omega} {G_0(u(x,t)) \,\d x}
 	\geqslant  \frac{C^{2-n}}{(n-1)(n-2)} \int_{\Omega} {|x - x_0|^{(2-n)(s+1-d/2)}\,\d x}.
\end{equation}
Since $n>2+\frac{2d}{2(s+1) - d}$, then
\[\int_{\Omega} |x - x_0|^{(2-n)\left(s+1-\frac d2\right)}\,\d x=+\infty \qquad \text{and, by \cref{disent},}\qquad \displaystyle\int_{\Omega} G_0(u(x,t)) \,\d x=+\infty.\]
Since \cref{eq:entropy} implies $\displaystyle\int_{\Omega} G_{0}(u(x,t)) \,\d x<+\infty$, we have a contradiction.

\section{Local entropy estimate}
\label{LEE}

In this section, we prove local entropy estimates for the weak solutions constructed in \cref{Th-ex}. These estimates will play a fundamental role in the proof of the finite speed of propagation and of the waiting time phenomenon.

In order to avoid technical problems and to highlight the main difficulties, we concentrate on a very simple geometry and restrict ourselves to the case in which $\Omega = B_R(0)$, with $R>0$.

For $S\in (0,R)$, we define
\begin{equation}
\label{eq:OmegaS}
\Omega(S) \coloneqq \left\{x\in \R^d: S<\abs{x}<R\right\}, \qquad \textrm{ and } \qquad \Omega_T(S) \coloneqq \Omega(S)\times (0,T).
\end{equation}
Moreover, for $S\in (0,R)$ and $\sigma \in (0,R-S)$, we consider the cut-off functions $\psi_{S,\sigma}\in C^\infty(\overline\Omega)$ such that
\begin{equation}\label{e-9}
\psi_{S,\sigma } (x) =
\begin{cases}
1 & \text{ if } x \in \Omega(S +  \sigma),\\
0 & \text{ if } x \in  B_S(0),
\end{cases}
\qquad\qquad 0<\psi_{S,\sigma }(x)<1\quad\text { if } S<|x|<S+\sigma,
\end{equation}
and
\begin{equation}\label{psiest}
	|\nabla \psi_{S,\sigma }| \leqslant   \frac{C} {\sigma },\quad | (-\Delta)^{ s +1 } \psi_{S,\sigma } | \leqslant   \frac{C} {  \sigma^{2(s+1)} },
\end{equation}
for a constant $C$ depending  on $d$ only.

\begin{lemma}\label{lemma-lee}

Let $u$ be a solution of problem \cref{eq:ft} given by  \cref{Th-ex}.
If $s\in(\frac{(d-2)_+}{2},1)$ and $n\in (1, \frac{s+2}{s+1})$,
then there exists a constant $C$ such that,
\begin{equation}\label{rt-2}
\begin{aligned}
&\int_{\Omega(S+  \sigma)}{G_0(u(x,T)) \,\d x}  +
\frac{1}{2} \iint_{\Omega_T(S +\sigma) }{ |(-\Delta)^{\frac{s+1}{2}}(u \,\psi_{S,\sigma } ) |^2   \,\d x \, \d t}  \\&\leqslant   \int_{\Omega(S)}{G_0(u_0) \,\d x} +
\frac{C}{\sigma^{2(s+1)}}  \int_{0}^T { \| u \|^{2}_{L^{2}(\Omega(S))} \,\d t} +
\frac{C }{\sigma^{2(s+1)}}  \left( \int_{0}^T { \| u \|^{2}_{L^{2}(\Omega(S))} \,\d t} \right)^{\varpi},
\end{aligned}
\end{equation}
where $\varpi \coloneqq  \min \{ \frac{s}{2s+1}, 1 -(n-1)(s+1), \frac{2ns - d(n-1)}{4s} \} < 1 $.
\end{lemma}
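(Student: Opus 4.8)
\emph{Overall strategy.} Inequality \cref{rt-2} cannot be justified directly on the weak solution $u$: the term $\iint\nabla u\cdot\nabla((-\Delta)^su)\,\psi^2$ requires one more spatial derivative of $(-\Delta)^su$ than the a priori bounds provide. I would therefore first establish it for the regularized solutions $u_{\eps,\delta}$ constructed in \cref{sec:existence} --- which are strictly positive in $\overline\Omega$, spatially Hölder continuous, and satisfy $u_{\eps,\delta}\in L^2((0,T);H^{1+s}_N(\Omega))$ uniformly in $\eps,\delta$, hence $p_{\eps,\delta}=(-\Delta)^su_{\eps,\delta}\in L^2((0,T);L^2(\Omega))$ uniformly --- and then pass to the limit $\eps,\delta\to0$ using the convergences \cref{strongCu,Hstroconv,a.e.conv}, the weak lower semicontinuity of $v\mapsto\|(-\Delta)^{\frac{s+1}{2}}v\|_{L^2}^2$ applied to $u_{\eps,\delta}\psi_{S,\sigma}\rightharpoonup u\psi_{S,\sigma}$, and Fatou's lemma together with $G_{\eps,\delta}(z)\geqslant G_0(z)-\tfrac{\eps}{\alpha-2}+\tfrac\delta2-\delta z$ (cf.\ \cref{diff_G2}) for the entropy terms. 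Since $0\leqslant\psi_{S,\sigma}\leqslant1$, $\psi_{S,\sigma}\equiv1$ on $\Omega(S+\sigma)$, and $\psi_{S,\sigma}\equiv0$ on $B_S(0)$, the full-domain localized quantities dominate the left-hand side of \cref{rt-2} and are dominated on the right-hand side by the integrals appearing there; the crucial point is that every constant produced is independent of $\eps,\delta$ and carries the stated negative power of $\sigma$.

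\emph{The localized entropy identity.} At the regularized level I would test the weak formulation \cref{apr-002} with $v=G'_{\eps,\delta}(u_{\eps,\delta})\,\psi^2$, where $\psi\coloneqq\psi_{S,\sigma}$; this is admissible because $G''_{\eps,\delta}=1/f_{\eps,\delta}\in L^\infty$ (by \cref{boundedf}) makes $G'_{\eps,\delta}$ Lipschitz, so $v\in L^2((0,T);H^1(\Omega))$. Using $f_{\eps,\delta}G''_{\eps,\delta}\equiv1$ to cancel the degeneracy and integrating in time gives
\begin{equation*}
\begin{aligned}
&\int_\Omega G_{\eps,\delta}(u_{\eps,\delta}(\cdot,T))\,\psi^2\,\d x+\iint_{\Omega_T}\nabla u_{\eps,\delta}\cdot\nabla p_{\eps,\delta}\,\psi^2\,\d x\,\d t\\
&\qquad=\int_\Omega G_{\eps,\delta}(u_{0,\eps,\delta})\,\psi^2\,\d x-\iint_{\Omega_T}f_{\eps,\delta}(u_{\eps,\delta})G'_{\eps,\delta}(u_{\eps,\delta})\,\nabla p_{\eps,\delta}\cdot\nabla(\psi^2)\,\d x\,\d t,
\end{aligned}
\end{equation*}
so it remains to bound $\iint\nabla u_{\eps,\delta}\cdot\nabla p_{\eps,\delta}\,\psi^2$ from below by $\tfrac12\iint|(-\Delta)^{\frac{s+1}{2}}(u_{\eps,\delta}\psi)|^2$ minus admissible errors, and the last term on the right by admissible errors.

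\emph{Localization of the nonlocal dissipation --- the main obstacle.} Split $\psi^2\nabla u_{\eps,\delta}=\nabla(u_{\eps,\delta}\psi^2)-u_{\eps,\delta}\nabla(\psi^2)$ and apply the Parseval integration-by-parts identities \cref{prop:2r,lem:ip} to rewrite $\iint\nabla(u_{\eps,\delta}\psi^2)\cdot\nabla p_{\eps,\delta}=\iint(-\Delta)^{\frac{s+1}{2}}(u_{\eps,\delta}\psi^2)\,(-\Delta)^{\frac{s+1}{2}}u_{\eps,\delta}$. Writing $u_{\eps,\delta}\psi^2=\psi\cdot(u_{\eps,\delta}\psi)$ and inserting the commutator $[(-\Delta)^{\frac{s+1}{2}},\psi]$ turns this into $\iint|(-\Delta)^{\frac{s+1}{2}}(u_{\eps,\delta}\psi)|^2$ plus two commutator pairings; since $[(-\Delta)^{\frac{s+1}{2}},\psi]$ has order $s$ and its weight is essentially carried by $\{\nabla\psi\neq0\}\subset\{S<|x|<S+\sigma\}$, these are controlled --- via the bounds \cref{psiest} on $\psi$, the interpolation \cref{interpsemi}, and the tail estimates for the spectral fractional Laplacian in \cref{app:lemmas} --- by $\frac{C}{\sigma^{2(s+1)}}\|u_{\eps,\delta}\|_{L^2(\Omega(S))}^2$, by a power $\varpi<1$ of the same quantity, and by a cross term $\frac{C}{\sigma^{s+1}}\|(-\Delta)^{\frac{s+1}{2}}(u_{\eps,\delta}\psi)\|_{L^2}\|u_{\eps,\delta}\|_{L^2(\Omega(S))}$ which Young's inequality absorbs into the good term. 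The leftover $\iint u_{\eps,\delta}\nabla(\psi^2)\cdot\nabla p_{\eps,\delta}$ is integrated by parts once more to remove $\nabla p_{\eps,\delta}$ (not controlled uniformly in $\eps,\delta$), producing $\iint p_{\eps,\delta}\big(\nabla u_{\eps,\delta}\cdot\nabla(\psi^2)+u_{\eps,\delta}\Delta(\psi^2)\big)$, handled in the next step. This step is the genuinely new difficulty: unlike in the local thin-film case, $[(-\Delta)^{\frac{s+1}{2}},\psi]$ is not compactly supported, so its far-field contribution must be absorbed through the tail estimates of \cref{app:lemmas} while keeping the sharp weight $\sigma^{-2(s+1)}$ --- this is the ``extra term'' alluded to in the introduction.

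\emph{Error terms, the restriction on $n$, and conclusion.} The term $-\iint f_{\eps,\delta}(u_{\eps,\delta})G'_{\eps,\delta}(u_{\eps,\delta})\,\nabla p_{\eps,\delta}\cdot\nabla(\psi^2)$, together with the leftover from the previous step, is integrated by parts to remove $\nabla p_{\eps,\delta}$, yielding integrals of $(-\Delta)^su_{\eps,\delta}$ against $\nabla(\psi^2)$ and $\Delta(\psi^2)$ times smooth functions of $u_{\eps,\delta}$. One then uses $|f_{\eps,\delta}(z)G'_{\eps,\delta}(z)|\leqslant C(z+z^n)$ (from \cref{eq:G,defregmob}), the bounds $|\nabla(\psi^2)|\leqslant C/\sigma$ and $|\Delta(\psi^2)|\leqslant C/\sigma^2\leqslant C/\sigma^{2(s+1)}$ for $\sigma\leqslant1$, Hölder's inequality, the Sobolev embeddings following \cref{eq:equiv} (notably $H^{1+s}(\Omega)\hookrightarrow L^\infty(\Omega)$, valid since $s+1>d/2$ because $s>\frac{(d-2)_+}{2}$), the uniform bounds $\|u_{\eps,\delta}\|_{L^\infty_tH^s}+\|u_{\eps,\delta}\|_{L^2_tH^{1+s}}\leqslant C$ from \cref{boundHsepsdelta,boundHsplus1}, the modified Gagliardo--Nirenberg inequality of \cref{app:lemmas} to recast the resulting local norms of $u_{\eps,\delta}$ in terms of $\|u_{\eps,\delta}\|_{L^2(\Omega(S))}$ and globally bounded norms, and finally Young's inequality. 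It is precisely here that the restriction $n<\frac{s+2}{s+1}$ enters for $d\in\{2,3\}$: since this construction gives only $u\in L^2((0,T);L^\infty(\Omega))$, and not $L^4((0,T);L^\infty(\Omega))$ as in the one-dimensional setting of \cite{NT2024}, the powers $z+z^n$ force the Hölder and interpolation exponents to be admissible only under this bound, and the competing estimates for the various error terms produce exactly the three arguments of the minimum defining $\varpi$, each $<1$ for $n\in(1,\frac{s+2}{s+1})$. Collecting the bounds at the level of $u_{\eps,\delta}$ and passing to the limit $\eps,\delta\to0$ as in the first paragraph yields \cref{rt-2}.
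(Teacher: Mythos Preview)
Your overall architecture is the same as the paper's: test with $G'(u)$ times a power of the cutoff, extract the nonlocal dissipation $\|(-\Delta)^{\frac{s+1}{2}}(u\psi)\|_{L^2}^2$ via a fractional Leibniz/commutator decomposition, integrate by parts to trade $\nabla p$ for $p$, and bound the residual terms using $|f'G'|(z)\leqslant C(z^{n-1}+1)$ together with the interpolation \cref{dong-01}, the tail estimate \cref{lem-fr}, and the embeddings $H^{1+s}\hookrightarrow L^\infty$, $H^s\hookrightarrow L^{2d/(d-2s)}$. The identification of the three competing exponents in $\varpi$ and the role of the restriction $n<\tfrac{s+2}{s+1}$ is also correct.

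There is, however, a genuine gap in your choice of approximation level. You claim $G''_{\eps,\delta}=1/f_{\eps,\delta}\in L^\infty$ by \cref{boundedf}, but that reference gives $0\leqslant f_{\eps,\delta}\leqslant 1/\delta$, hence $G''_{\eps,\delta}\geqslant\delta$, not an upper bound; in fact $G''_{\eps,\delta}(z)\sim\eps z^{-\alpha}\to+\infty$ as $z\to0^+$. Only $G''_{\eps,\delta,\gamma}\leqslant 1/\gamma$ is bounded. One can try to repair this via the strict positivity \cref{positivity}, but the lower bound on $u_{\eps,\delta}(\cdot,t)$ is $t$-dependent, and neither the admissibility of the test function $G'_{\eps,\delta}(u_{\eps,\delta})\psi^2\in L^2((0,T);H^1(\Omega))$ nor the finiteness of $\iint\nabla u_{\eps,\delta}\cdot\nabla p_{\eps,\delta}\,\psi^2$ over $[0,T]$ follows without further work (recall $p_{\eps,\delta}\in L^2((0,T);H^{1-s})$ only, so $\nabla p_{\eps,\delta}\notin L^2(\Omega_T)$ in general). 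This is precisely why the paper carries out the computation one level higher, on the smooth Galerkin approximants $u^N_{\eps,\delta,\gamma}$ (for which $G''_{\eps,\delta,\gamma}\in L^\infty$ and every integration by parts is elementary), and then passes to the limit through all four parameters $N\to\infty$, $\gamma\to0$, $\eps,\delta\to0$. A secondary difference: the paper uses $\psi^3$ rather than $\psi^2$, which allows the factorization $u\psi^3=(u\psi^2)\psi$ followed by $u\psi^2=\psi(u\psi)$ so that the good term emerges as $\int\psi\,|(-\Delta)^{\frac{s+1}{2}}(u\psi)|^2$; your $\psi^2$ variant produces instead a pairing with the global $(-\Delta)^{\frac{s+1}{2}}u$, which can still be handled but introduces an extra commutator against the unlocalized $u$.
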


\begin{proof}[Proof of \cref{lemma-lee}]

Let us consider the approximated smooth classical solutions $u_{\varepsilon,\delta,\gamma}^N$
and the regularized entropy $G_{\varepsilon,\delta,\gamma}$ defined in \cref{defGepsdeltagamma}.

Next, for simplicity, we denote by $u \coloneqq  u^N_{\varepsilon,\delta,\gamma}$ in $\Omega_T$ and $\psi\coloneqq \psi_{S,\sigma}$.
Since $u$ are classical solutions of problem \cref{eq:ft-r},  multiplying equation in \cref{eq:ft-r} by $G'_{\varepsilon,\delta,\gamma}(u(\cdot,t)) \psi^3(\cdot)$
and integrating in $\Omega$ we have
\begin{align}
\label{eq:conto1}
\frac{\d}{\d t}\int_\Omega G_{\varepsilon,\delta}(u) \psi^3 \,\d x
&= \int_\Omega \psi^3 G'_{\varepsilon,\delta,\gamma}(u) \div( f_{\varepsilon,\delta,\gamma}(u) \nabla p) \,\d x \nonumber\\
&= -\int_\Omega \psi^3 f_{\varepsilon,\delta,\gamma}(u) G''_{\varepsilon,\delta\gamma}(u)\nabla u\cdot \nabla p\,\d x
- \int_\Omega f_{\varepsilon,\delta,\gamma}(u) G'_{\varepsilon,\delta,\gamma}(u) \nabla \psi^3 \cdot \nabla p\,\d x \nonumber\\
& = -\int_\Omega \psi^3 \nabla u\cdot \nabla p\,\d x + \int p\div\left(f_{\varepsilon,\delta,\gamma}(u) G'_{\varepsilon,\delta,\gamma}(u)  \nabla \psi^3 \right)\,\d x\nonumber\\
& = \int_\Omega u \psi^3 \Delta p\,\d x + \int u \nabla \psi^3 \cdot \nabla p \,\d x \nonumber\\
&\quad + \int_\Omega p\left(f_{\varepsilon,\delta,\gamma}(u) G'_{\varepsilon,\delta,\gamma}(u) \right)'\nabla u\cdot \nabla \psi^3 \,\d x +
\int_\Omega p f_{\varepsilon,\delta,\gamma}(u) G'_{\varepsilon,\delta,\gamma}(u)  \Delta \psi^3 \,\d x \nonumber\\
& \eqqcolon  A + B + C +D.
\end{align}

We develop the terms of the last expression.
Given $\beta>0$ and $w,v\in H^\beta_N(\Omega)$ we define
$$
R_{\beta}(w,v) = (-\Delta)^{\beta}(w\,v) - w (-\Delta)^{\beta} v - v (-\Delta)^{\beta}w.
$$
Using \cref{lem:ip} and the previous notation we obtain
\begin{align}
\label{eq:A}
A &=   - \int_\Omega u \psi^2 \psi ( -\Delta)^{s+1} u \,\d x   \nonumber\\
& = - \int_\Omega u \psi^2   ( -\Delta)^{s+1} (u \psi) \,\d x + \int_\Omega u^2 \psi^2   ( -\Delta)^{s+1} \psi \,\d x
+ \int_\Omega u \psi^2 R_{s+1} (u,\psi) \,\d x   \nonumber\\
& = -\int_\Omega \Dusm\left(u\psi^2\right)\Dusm (u \psi) \,\d x + \int_\Omega u^2 \psi^2   ( -\Delta)^{s+1} \psi \,\d x
+ \int_\Omega u \psi^2 R_{s+1} (u,\psi) \,\d x   \nonumber\\
& = -\int_\Omega \abs{\Dusm (u\psi) }^2 \psi \,\d x  - \int_\Omega   R_{\frac{1+s}{2}}(u\psi,\psi) \Dusm  (u \psi)  \,\d x \nonumber\\
& \quad -  \int_\Omega  u \psi  \Dusm\left(u\psi \right)\Dusm  \psi \, \d x + \int_\Omega u^2 \psi^2   ( -\Delta)^{s+1} \psi \,\d x \nonumber\\
&\quad  + \int_\Omega u \psi^2 R_{s+1} (u,\psi) \,\d x .
\end{align}
\begin{align}\label{eq:A-0}
  B  &  = -\int_\Omega p\div(u\nabla \psi^3) \,\d x =  -\int_\Omega p\nabla u\cdot \nabla \psi^3 \,\d x -\int_\Omega pu \Delta \psi^3 \,\d x  \nonumber\\
     & =  - 3 \int_\Omega p \psi \nabla (u \psi) \cdot\nabla \psi  \,\d x - 3 \int_\Omega p \psi u ( |\nabla \psi |^2 + \psi \Delta \psi)  \,\d x.
\end{align}

On the other hand, since for any $z>0$, 
$$
(f_{\varepsilon,\delta,\gamma}(z) G'_{\varepsilon,\delta,\gamma}(z))' = 1 +  f'_{\varepsilon,\delta,\gamma}(z) G'_{\varepsilon,\delta,\gamma}(z) , \qquad
f_{\varepsilon,\delta,\gamma}(z) G'_{\varepsilon,\delta,\gamma}(z) = z    + \int_0^z{ f'_{\varepsilon,\delta.\gamma}(r) G'_{\varepsilon,\delta,\gamma}(r) \, \d r},
$$
then
\begin{align}
\label{eq:B}
\begin{aligned}
C &= \int_\Omega p\left(f_{\varepsilon,\delta,\gamma}(u) G'_{\varepsilon,\delta,\gamma}(u) \right)'\nabla u\cdot \nabla \psi^3 \,\d x
\\ &=  \int_\Omega p\nabla u\cdot\nabla \psi^3 \,\d x  +  \int_\Omega p f'_{\varepsilon,\delta,\gamma}(u) G'_{\varepsilon,\delta,\gamma}(u) \nabla u\cdot \nabla \psi^3 \,\d x,
\end{aligned}
\end{align}
and
\begin{align}
\label{eq:C}
D & = \int_\Omega p f_{\varepsilon,\delta}(u) G'_{\varepsilon,\delta}(u) \Delta \psi^3 \,\d x
=  \int_\Omega p u \Delta \psi^3 \,\d x +
\int_\Omega p \Bigl(   \int_0^u{ f'_{\varepsilon,\delta}(r) G'_{\varepsilon,\delta}(r) \, \d r}   \Bigr) \Delta \psi^3 \d x.
\end{align}
Taking into account that
$ \nabla u\cdot \nabla \psi^3 = 3\psi\nabla (u\psi) -3\psi|\nabla\psi|^2 $ and $ \Delta \psi^3 = 3(\psi^2\Delta\psi +2\psi|\nabla\psi|^2)$
we have that
\begin{align*}
B+C+D &= - 3  \int_\Omega p \psi u |\nabla\psi|^2  \,\d x + 3  \int_\Omega p u \psi^2  \Delta\psi \,\d x \\
& \qquad +3  \int_\Omega p \psi  f'_{\varepsilon,\delta,\gamma}(u) G'_{\varepsilon,\delta,\gamma}(u)  \nabla (u \psi)  \cdot\nabla \psi \,\d x -
3 \int_\Omega p \psi u f'_{\varepsilon,\delta,\gamma}(u) G'_{\varepsilon,\delta,\gamma}(u)    |\nabla \psi |^2    \,\d x  \nonumber\\
& \qquad + 3 \int_\Omega p \psi \Bigl(   \int_0^u{ f'_{\varepsilon,\delta,\gamma}(v) G'_{\varepsilon,\delta,\gamma}(v) \, \d v}   \Bigr) \bigl(2 |\nabla \psi |^2 +  \psi \Delta \psi \bigr) \, \d x .
\end{align*}
Summing up, we obtain the following relation
\begin{align}
\label{eq:weighted_entropy1}
&\frac{\d}{\d t}\int_\Omega G_{\varepsilon,\delta} (u) \psi^3 \,\d x   
 +\int_\Omega \abs{\Dusm (u\psi) }^2 \psi \, \d x \nonumber\\
& =  \int_\Omega u^2 \psi^2   ( -\Delta)^{s+1} \psi \,\d x + \int_\Omega u \psi^2 R_{s+1} (u,\psi) \,\d x  \nonumber\\
&\qquad  - \int_\Omega   R_{\frac{1+s}{2}}(u\psi,\psi) \Dusm  (u \psi) \, \d x -  \int_\Omega  u \psi  \Dusm\left(u\psi \right)\Dusm  \psi \, \d x  \nonumber\\
&\qquad - 3  \int_\Omega p \psi u |\nabla\psi|^2  \,\d x + 3  \int_\Omega p u \psi^2  \Delta\psi \,\d x \\
&\qquad +3  \int_\Omega p \psi  f'_{\varepsilon,\delta,\gamma}(u) G'_{\varepsilon,\delta,\gamma}(u)  \nabla (u \psi)  \cdot\nabla \psi \, \d x
 \nonumber\\ &\qquad -
3 \int_\Omega p \psi u f'_{\varepsilon,\delta,\gamma}(u) G'_{\varepsilon,\delta,\gamma}(u)    |\nabla \psi |^2 \,\d x  \nonumber\\
&\qquad + 3 \int_\Omega p \psi \Bigl(\int_0^u{ f'_{\varepsilon,\delta,\gamma}(v) G'_{\varepsilon,\delta,\gamma}(v) \, \d v}   \Bigr) \bigl(2 |\nabla \psi |^2 +  \psi \Delta \psi \bigr)  \,\d x .\nonumber
\end{align}

In order to estimate the right-hand side of \cref{eq:weighted_entropy1}, we observe that there exists constants $c_1, c_2 >0$ depending only on the dimension $d$ such that,
for any $\gamma>0$ and $\eps,\delta\in (0,1)$, 
\begin{equation}\label{fGest}
|f'_{\varepsilon,\delta,\gamma}(z) G'_{\varepsilon,\delta,\gamma}(z)|   \leqslant   c_1 z^{n-1} + c_2 \qquad\text{for all }z\in (0,+\infty).
\end{equation}
Indeed, since $|G'_{\varepsilon,\delta,\gamma}(z)|\leqslant   |G'_{\varepsilon,\delta}(z)|$, $f'_{\varepsilon,\delta,\gamma}=f'_{\varepsilon,\delta}$ for any $z\in (0,+\infty)$ and
\begin{align*}
|f'_{\varepsilon,\delta}(z) G'_{\varepsilon,\delta}(z)| &\leqslant  
\frac{z^{n+\alpha -1}(n z^{\alpha} + \varepsilon \alpha z^n)}{(z^{\alpha} + \varepsilon z^n + \delta z^{n+\alpha})^2}
\left| G'_0(z) - \frac{\varepsilon}{\alpha -1} z^{1-\alpha} + \delta \, z +  \frac{\varepsilon}{\alpha -1} - \delta \right |
\\ & \leqslant  
 \frac{4n + \alpha}{4}  z^{n-1}|G'_0(z)| +  \frac{n + 4\alpha}{4(\alpha -1)} + \frac{n +\alpha}{4} +
 \frac{4n + \alpha}{4}\left(\frac{\varepsilon}{\alpha -1} + \delta\right)  z^{n-1} ,
\end{align*}
we obtain \cref{fGest}.

Next, we will use the following inequalities:\\
 for $\beta \in (0,2)$, and $w,v\in H^\beta_N(\Omega)$,
\begin{equation}\label{leibest}
	\| R_{\beta}(w,v) \|_{L^2(\Omega)} \leqslant   \|w\|_{L^2(\Omega)} \| (-\Delta)^{\beta}v\|_{L^{\infty}(\Omega)},
\end{equation}
(see \cref{lm:leibnitz});\\
for $\beta \in (0, \frac{1+s}{2})$
\begin{equation}\label{eq:interpbeta}
\| \psi (- \Delta)^{\beta} u \|_{L^2(\Omega)} \leqslant   \| \Dusm (u \psi)  \|^{\theta}_{L^2(\Omega) }
\|  u \psi   \|^{1 - \theta}_{L^2(\Omega) }  + \frac{C}{\sigma^{2\beta}}\|u \|_{L^2(\Omega) } ,
\end{equation}
where $ \theta = \frac{2\beta}{1+s}$ (see \cref{lem-fr-3}).

By \cref{psiest} the first term of the right-hand side of \cref{eq:weighted_entropy1} can be estimated as
$$
	\int_\Omega u^2 \psi^2( -\Delta)^{s+1} \psi \,\d x \leqslant   \frac{C}{\sigma^{2(s+1)}}  \| u \psi \|^2_{L^2(\Omega )},
$$
whereas the second term, using \cref{leibest} and \cref{psiest}, can be estimated as
$$
\int_\Omega u \psi^2 R_{s+1} (u,\psi) \,\d x  \leqslant   \| u \psi \|_{L^2(\Omega)} \| \psi R_{s+1} (u,\psi) \|_{L^2(\Omega)}
\leqslant   \frac{C}{\sigma^{2(s+1)}}  \| u \psi \|_{L^2(\Omega )} \| u \|_{L^2(\Omega)}.
$$
For the third term, using \cref{leibest} and \cref{psiest}, for any $\epsilon >0$ we have that
\begin{align*}
\int_\Omega   R_{\frac{1+s}{2}}(u\psi,\psi) \Dusm  (u \psi)  \,\d x
&\leqslant    \| R_{\frac{1+s}{2}}(u\psi,\psi) \|_{L^2(\Omega )} \| \Dusm  (u \psi) \|_{L^2(\Omega )}
 \\ &\leqslant  
 \frac{C}{\sigma^{ s+1 }} \|  u \psi  \|_{L^2(\Omega )}  \| \Dusm  (u \psi) \|_{L^2(\Omega )} \\
&\leqslant  
\epsilon \|\Dusm  (u \psi) \|^2_{L^2(\Omega )} + \frac{C}{4\epsilon\sigma^{2(s+1)}} \|  u \psi  \|^2_{L^2(\Omega )}.
\end{align*}
For the forth term, using \cref{psiest}, for any $\epsilon >0$ we have that
\begin{align*}
-\int_\Omega  u \psi  \Dusm\left(u\psi \right)\Dusm \psi \,\d x  &\leqslant   \frac{C}{\sigma^{s+1}} \| \Dusm  (u \psi) \|_{L^2(\Omega )}  \|  u \psi  \|_{L^2(\Omega )} \\
&\leqslant   \epsilon \| \Dusm  (u \psi) \|^2_{L^2(\Omega )} + \frac{C}{4\epsilon\sigma^{2(s+1)}} \|  u \psi  \|^2_{L^2(\Omega )}.
\end{align*}
Still using  \cref{psiest} we have
\begin{equation*}
	- 3  \int_\Omega p \psi u |\nabla\psi|^2  \,\d x + 3  \int_\Omega p u \psi^2  \Delta\psi \,\d x \leqslant    \frac{C}{\sigma^{2 }}\| p \psi  \|_{L^2(\Omega )}  \|  u  \|_{L^2(\Omega(S))}.
\end{equation*}
Using \cref{fGest} and \cref{psiest} we obtain
\begin{align*}
 &3  \int_\Omega p \psi  f'_{\varepsilon,\delta,\gamma}(u) G'_{\varepsilon,\delta,\gamma}(u)  \nabla (u \psi)  \cdot\nabla \psi  \,\d x \\ & \leqslant  
  3  \int_\Omega |p \psi| (c_1u^{n-1}+c_2) |\nabla (u \psi)|\, |\nabla \psi|  \,\d x \\
  &\leqslant  
\frac{C}{\sigma }
 \| p \psi  \|_{L^2(\Omega )}  \| \nabla (u \psi) \|_{L^2(\Omega )}  \|  u  \|^{n-1}_{L^{\infty}(\Omega(S) )}  +  \frac{C}{\sigma }  \| p \psi  \|_{L^2(\Omega )}  \| \nabla (u \psi) \|_{L^2(\Omega )}.
\end{align*}
Analogously,
\begin{align*}
- 3 \int_\Omega p \psi u f'_{\varepsilon,\delta,\gamma}(u) G'_{\varepsilon,\delta,\gamma}(u)    \abs{\nabla \psi }^2    \d x   & \leqslant  
\frac{C}{\sigma^{2 }} \| p \psi  \|_{L^2(\Omega )}  \|  u  \|^n_{L^{2n}(\Omega(S))}  + \frac{C}{\sigma^{2 }}
\| p \psi  \|_{L^2(\Omega )}  \|  u  \|_{L^2(\Omega(S))}.
\end{align*}
Since from \cref{fGest} we have
$$\abs{\int_0^u{ f'_{\varepsilon,\delta,\gamma}(r) G'_{\varepsilon,\delta,\gamma}(r) \, \d r}} \leqslant   C(u^n+u),$$
then
\begin{align*}
& 3 \int_\Omega p \psi \left(\int_0^u{ f'_{\varepsilon,\delta,\gamma}(r) G'_{\varepsilon,\delta,\gamma}(r) \, \d r}\right) \left( 2 |\nabla \psi |^2 +  \psi \Delta \psi \right)  \d x \\
& \leqslant  
\frac{C}{\sigma^{2 }} \| p\psi\|_{L^2(\Omega )}  \|u\|^n_{L^{2n}(\Omega(S))}
 + \frac{C}{\sigma^{2}}\| p\psi  \|_{L^2(\Omega )}  \|  u  \|_{L^2(\Omega(S))}.
\end{align*}
Using the interpolation \cref{dong-01} for $\beta=1$ and \cref{eq:interpbeta} for $\beta=s$, we have
\begin{align*}
 \frac{C}{\sigma }
 \| p \psi  \|_{L^2(\Omega )}  \| \nabla (u \psi) \|_{L^2(\Omega )} &\leqslant  
\frac{C}{\sigma } \| p \psi  \|_{L^2(\Omega )} \| \Dusm  (u \psi) \|^{\frac{1}{1+s}}_{L^2(\Omega )} \|  u \psi  \|^{\frac{s}{1+s}}_{L^2(\Omega )} \\
& \leqslant   \frac{C}{\sigma } \| \Dusm  (u \psi) \|^{\frac{2s+ 1}{1+s}}_{L^2(\Omega )}
\|  u \psi  \|^{\frac{1}{1+s}}_{L^2(\Omega )}
\\ &\qquad +
\frac{C}{\sigma^{2s +1} } \| \Dusm  (u \psi) \|^{\frac{1}{1+s}}_{L^2(\Omega )}
\|  u \psi  \|^{\frac{s}{1+s}}_{L^2(\Omega )} \| u \|_{L^2(\Omega)} \\
&\leqslant   \epsilon \| \Dusm  (u \psi) \|^2_{L^2(\Omega )} +  \frac{C(\epsilon)}{\delta^{2(s+1)}} \|  u \psi  \|^2_{L^2(\Omega )} \\ & \qquad +
\frac{C(\epsilon)}{\sigma^{2(s+1)}} \|  u \psi  \|^{ \frac{2s}{2s+1}}_{L^2(\Omega )} \| u \|^{ \frac{2(1+s)}{2s+1}}_{L^2(\Omega )}.
\end{align*}
Analogously,
\begin{align*}
  & \frac{C}{\sigma }\| p \psi  \|_{L^2(\Omega )}  \| \nabla (u \psi) \|_{L^2(\Omega )}  \|  u  \|^{n-1}_{L^{\infty}(\Omega(S) )}\\  &\leqslant  
\epsilon \| \Dusm  (u \psi) \|^2_{L^2(\Omega )}
+\frac{C(\epsilon)}{\sigma^{2(s+1)}} \|  u \psi  \|^2_{L^2(\Omega )}
\|  u  \|^{2(n-1)(s+1)}_{L^{\infty}(\Omega(S) )} \\
&\quad +\frac{C(\epsilon)}{\sigma^{2(s+1)}} \|  u \psi  \|^{ \frac{2s}{2s+1}}_{L^2(\Omega )} \| u \|^{ \frac{2(1+s)}{2s+1}}_{L^2(\Omega )}
\|  u  \|^{\frac{2(n-1)(s+1)}{2s+1}}_{L^{\infty}(\Omega(S) )},
\end{align*}
and
\begin{align*}
   &\frac{C}{\sigma^{2 }} \| p \psi  \|_{L^2(\Omega )}  \|  u  \|_{L^2(\Omega(S))} \\  & \leqslant  
	\frac{C}{\sigma^2}\| \Dusm  (u \psi) \|^{\frac{2s}{1+s}}_{L^2(\Omega )} \|  u \psi  \|^{\frac{1-s}{1+s}}_{L^2(\Omega )} \|  u  \|_{L^2(\Omega(S))}
	+ \frac{C}{\sigma^{2(s+1)}} \|  u  \|_{L^2(\Omega )} \|  u  \|_{L^2(\Omega(S))} \\
	&\leqslant   \epsilon \| \Dusm  (u \psi) \|^2_{L^2(\Omega )} + \frac{C(\epsilon)}{\sigma^{2(s+1)}} \|  u \|^2_{L^2(\Omega(S) )} +
\frac{C }{\sigma^{2(s+1)}} \|  u  \|_{L^2(\Omega )} \|  u  \|_{L^2(\Omega(S))},
\end{align*}

Also,  using the interpolation inequality
\begin{equation}\label{ii-t}
\| u \|_{L^a(\Omega )} \leqslant   \| u\|_{L^c(\Omega )}^{\theta_2} \| u \|_{L^b(\Omega )}^{1-\theta_2} ,\qquad \text{ where } \theta_2 = \frac{c(a-b)}{a(c-b)},\ \quad
b < a < c,
\end{equation}
with $a = 2n, \, b = 2 , \, c = \frac{2d}{d-2s}$, and consequently $\theta_2 = \frac{d(n-1)}{2ns}$,
(using the embedding \cref{emb1} for $s<d/2$,
hence  $L^{\infty}((0,T); H^s(\Omega)) \subset L^{\infty}((0,T); L^{\frac{2d}{d-2s}} (\Omega)) $) we find that
\begin{align*}
&\frac{C}{\sigma^{2 }}
	\| p\psi  \|_{L^2(\Omega )}  \|  u  \|^n_{L^{2n}(\Omega(S))}\\
 & \leqslant  
	\frac{C}{\sigma^2}\| \Dusm  (u \psi) \|^{\frac{2s}{1+s}}_{L^2(\Omega )} \|  u \psi  \|^{\frac{1-s}{1+s}}_{L^2(\Omega )}\|  u  \|^n_{L^{2n}(\Omega(S))}
	+ \frac{C}{\sigma^{2(s+1)}} \|  u  \|_{L^2(\Omega )} \|  u  \|^n_{L^{2n}(\Omega(S))} \\
	&\leqslant   \epsilon \| \Dusm  (u \psi) \|^2_{L^2(\Omega )} + \frac{C(\epsilon)}{\sigma^{2(s+1)}}  \|  u \psi  \|^{ 1-s}_{L^2(\Omega )}\|  u  \|^{n(1+s)}_{L^{2n}(\Omega(S))}
	+ \frac{C }{\sigma^{2(s+1)}} \|  u  \|_{L^2(\Omega )} \|  u  \|^n_{L^{2n}(\Omega(S))} \\
	&\leqslant   \epsilon \| \Dusm  (u \psi) \|^2_{L^2(\Omega )} + \frac{C(\epsilon)}{\sigma^{2(s+1)}}  \|  u \psi  \|^{ 1-s}_{L^2(\Omega )}\|  u  \|^{n(1+s)(1-\theta_2)}_{L^{2}(\Omega(S))} \|  u  \|^{n(1+s) \theta_2}_{L^{\frac{2d}{d-2s}}(\Omega(S))} \\
	&\quad +
	\frac{C }{\sigma^{2(s+1)}} \|  u  \|_{L^2(\Omega )} \|  u  \|^{n (1-\theta_2)}_{L^{2}(\Omega(S))} \|  u  \|^{n \theta_2}_{L^{\frac{2d}{d-2s}}(\Omega(S))} \\
	&\leqslant    \epsilon \| \Dusm  (u \psi) \|^2_{L^2(\Omega )} + \frac{C(\epsilon)}{\sigma^{2(s+1)}}\|  u  \|^{ \frac{2s(1 -s) +(1+s)(2ns - d(n-1))}{2s} }_{L^{2}(\Omega(S))}   +
	\frac{C }{\sigma^{2(s+1)}} \|  u  \|_{L^2(\Omega )} \|  u  \|^{ \frac{ 2ns - d(n-1) }{2s} }_{L^{2}(\Omega(S))}  .
\end{align*}

By \cref{lem-fr} with $\alpha = \frac{s+1}{2}$ we have that
\begin{equation}\label{rrr-int}
	\| (-\Delta)^{\frac{s+1}{2}} ( u \psi_{S,\sigma })  \|^2_{L^2(\Omega)}
	\leqslant   2\| (-\Delta)^{\frac{s+1}{2}} ( u \psi_{S,\sigma })  \|^2_{L^2(\Omega(S+\sigma))} + \frac{C}{\sigma^{2(s+1)}}  \|   u \psi_{S,\sigma } \|^2_{L^2(\Omega )}.
\end{equation}
Taking into account that
$$\|u\psi_{S,\sigma}\|^2_{L^2(\Omega )}\leqslant   \|u\|^2_{L^2(\Omega(S))},$$
using these estimates and H\"older's inequalities,
from \cref{eq:weighted_entropy1} we arrive at
\begin{align*}
&\int_{\Omega(S+\sigma)}{G_{\varepsilon,\delta,\gamma}(u(x,T)) \,\d x}  +
 \iint_{\Omega_T(S +\sigma) }{ |(-\Delta)^{\frac{s+1}{2}}(u \,\psi_{S,\sigma } ) |^2   \,\d x\,\d t}  \\
 &\leqslant  
\int_{\Omega(S)}{G_{\varepsilon,\delta,\gamma}(u_{0,\varepsilon,\delta,\gamma}) \,\d x} +
\epsilon \iint_{\Omega_T(S+\sigma) }{ |(-\Delta)^{\frac{s+1}{2}}(u \,\psi_{S,\sigma } ) |^2   \,\d x \, \d t}
+ \frac{C }{\sigma^{2(s+1)}}   \int_{0}^T { \| u \|^{2}_{L^{2}(\Omega(S))} \,\d t} \\
&\quad +
\frac{C }{\sigma^{2(s+1)}} \Bigl(  \int_{0}^T { \| u \|^{2}_{L^{2}(\Omega(S))} \,\d t} \Bigr)^{\frac{s}{2s+1}}
 \Bigl(  \int_{0}^T { \| u \|^{2}_{L^{2}(\Omega)} \,\d t} \Bigr)^{\frac{s+1}{2s+1}}  \\
 &\quad +
\frac{C}{\sigma^{ 2(s+1) }} \Bigl(  \int_{0}^T { \|  u \|^{\frac{2}{1-(n-1)(s+1)}}_{L^2(\Omega(S) )} \,\d t} \Bigr)^{1 -(n-1)(s+1)}
 \Bigl(  \int_{0}^T { \|  u  \|^{2}_{L^{\infty}(\Omega(S) )}  \,\d t} \Bigr)^{ (n-1)(s+1) } \\
&\quad +
\frac{C }{\sigma^{2(s+1)}} \Bigl(  \int_{0}^T { \| u \|^{2}_{L^{2}(\Omega(S))} \,\d t} \Bigr)^{\frac{s}{2s+1}}
 \Bigl(  \int_{0}^T { \| u \|^{\frac{2}{2-n}}_{L^{2}(\Omega)} \,\d t} \Bigr)^{\frac{(s+1)(2-n)}{2s+1}}
\Bigl(  \int_{0}^T { \|  u  \|^{2}_{L^{\infty}(\Omega(S) )}  \,\d t} \Bigr)^{ \frac{(n-1)(s+1)}{2s+1} }
\\
&\quad +
\frac{C }{\sigma^{2(s+1)}}
\int_{0}^T { \|  u  \|^{ \frac{2s(1 -s) +(1+s)(2ns - d(n-1))}{2s} }_{L^{2}(\Omega(S))} \,\d t}  +
\frac{C }{\sigma^{2( s+1)}}
 \Bigl(  \int_{0}^T { \|  u    \|^{ \frac{2ns - d(n-1)}{s}} _{L^2(\Omega(S) )}\,\d t } \Bigr)^{ \frac{1}{2 } }
\Bigl(  \int_{0}^T {  \| u \|^{2} _{L^2(\Omega  )}\,\d t } \Bigr)^{\frac{1}{2 } } \\
&\leqslant  
\int_{\Omega(S)}{G_{\varepsilon,\delta,\gamma}(u_{0,\varepsilon,\delta,\gamma}) \,\d x} +
\epsilon \iint_{\Omega_T(S+\sigma) }{ |(-\Delta)^{\frac{s+1}{2}}(u \,\psi_{S,\sigma } ) |^2   \,\d x \,\d t}
+ \frac{C }{\sigma^{2(s+1)}}   \int_{0}^T { \| u \|^{2}_{L^{2}(\Omega(S))} \,\d t} \\
&\quad +
\frac{C }{\sigma^{2(s+1)}} \Bigl(  \int_{0}^T { \| u \|^{2}_{L^{2}(\Omega(S))} \,\d t} \Bigr)^{\frac{s}{2s+1}}
 \left(  \int_{0}^T { \| u \|^{2}_{L^{2}(\Omega)} \,\d t} \right)^{\frac{s+1}{2s+1}}  \\
 &\quad +
 \frac{C}{\sigma^{ 2(s+1) }} \Bigl(  \int_{0}^T { \|  u \|^{2 + \frac{2(n-1)(s+1)}{1-(n-1)(s+1)}}_{L^2(\Omega(S) )} \,\d t} \Bigr)^{1 -(n-1)(s+1)}
 \left(  \int_{0}^T { \|  u  \|^{2}_{L^{\infty}(\Omega(S) )}  \,\d t} \right)^{ (n-1)(s+1) } \\
 &\quad +
\frac{C }{\sigma^{2(s+1)}} \left(  \int_{0}^T { \| u \|^{2}_{L^{2}(\Omega(S))} \,\d t} \right)^{\frac{s}{2s+1}}
 \left(  \int_{0}^T { \| u \|^{2 + \frac{2(n-1)}{2-n}}_{L^{2}(\Omega)} \,\d t} \right)^{\frac{(s+1)(2-n)}{2s+1}}
\left(  \int_{0}^T { \|  u  \|^{2}_{L^{\infty}(\Omega(S) )}  \,\d t} \right)^{ \frac{(n-1)(s+1)}{2s+1} }\\
&\quad +
\frac{C }{\sigma^{2( s+1)}} T^{ \frac{(s+1)(n-1)(d-2s)}{4s}} \left(  \int_{0}^T { \|  u  \|^{ 2}_{L^{2}(\Omega(S))} \,\d t} \right)^{ \frac{2s(1 -s) +(1+s)(2ns - d(n-1))}{4s} }    \\
&\quad +
\frac{C }{\sigma^{2(s+1)}}T^{ \frac{(n-1)(d-2s)}{4s}}  \left(  \int_{0}^T { \|  u    \|^{2} _{L^2(\Omega(S) )}\,\d t } \right)^{ \frac{2ns - d(n-1)}{4s}}
\left(  \int_{0}^T {  \| u \|^{2} _{L^2(\Omega  )}\,\d t } \right)^{\frac{1}{2 } },
\end{align*}
where $1 < n < 1 + \frac{1}{s+1}$.
Using the embedding theorems   $L^{\infty}((0,T); H^s(\Omega)) \subset L^{\infty}((0,T); L^{\frac{2d}{d-2s}} (\Omega)) $,
$L^{2}((0,T); H^{s+1}(\Omega) ) \subset L^{2}(0,T ; L^{\infty}(\Omega) )$ with $\frac{d-2}{2}   < s < 1$, and \cref{rrr-int},
we obtain that
\begin{align}\label{rt-1}
\begin{aligned}
&\int_{\Omega(S+  \sigma)}{G_{\varepsilon,\delta,\gamma}(u(x,T)) \,\d x}  +
 \iint_{\Omega_T(S +\sigma) }{ |(-\Delta)^{\frac{s+1}{2}}(u \,\psi_{S,\sigma } ) |^2   \,\d x\,\d t}
 \\ &\leqslant    \int_{\Omega(S)}{G_{\varepsilon,\delta,\gamma}(u_{0,\varepsilon,\delta,\gamma}) \,\d x} +
\epsilon \iint_{\Omega_T(S+\sigma) }{ |(-\Delta)^{\frac{s+1}{2}}(u \,\psi_{S,\sigma } ) |^2   \,\d x\,\d t}
\\ &\qquad + \frac{C }{\sigma^{2(s+1)}}   \int_{0}^T { \| u \|^{2}_{L^{2}(\Omega(S))} \,\d t} +
\frac{C }{\sigma^{2(s+1)}}  \Bigl( \int_{0}^T { \| u \|^{2}_{L^{2}(\Omega(S))} \,\d t} \Bigr)^{\varpi},
\end{aligned}
\end{align}
where $\varpi \coloneqq \min \{ \frac{s}{2s+1}, 1 -(n-1)(s+1), \frac{2ns - d(n-1)}{4s} \} < 1 $.
Choosing, for instance,   $\epsilon =1/2$ in \cref{rt-1} we obtain that
\begin{align*}
	&\int_{\Omega(S+  \sigma)}G_{\varepsilon,\delta,\gamma}(u^N_{\varepsilon,\delta,\gamma}(x,T)) \,\d x  +
 \frac{1}{2}\iint_{\Omega_T(S +\sigma)} |(-\Delta)^{\frac{s+1}{2}}(u^N_{\varepsilon,\delta,\gamma} \,\psi_{S,\sigma } ) |^2   \,\d x\,\d t\\
 &\leqslant    \int_{\Omega(S)}{G_{\varepsilon,\delta,\gamma}(u^N_{0,\varepsilon,\delta,\gamma}) \,\d x} +
 \frac{C }{\sigma^{2(s+1)}}   \int_{0}^T { \| u^N_{\varepsilon,\delta,\gamma} \|^{2}_{L^{2}(\Omega(S))} \,\d t} +
\frac{C }{\sigma^{2(s+1)}}  \Bigl( \int_{0}^T { \| u^N_{\varepsilon,\delta,\gamma} \|^{2}_{L^{2}(\Omega(S))} \,\d t} \Bigr)^{\varpi}.
\end{align*}
Passing to the limit for $N\to+\infty$, $\gamma\to 0$ and after for $\delta\to 0$ and $\varepsilon\to 0$,
taking into account the convergences stated in the proof of \cref{Th-ex},
 we obtain \cref{rt-2} for $1 < n < 1 + \frac{1}{s+1}$ and $\frac{(d-2)_+}{2} < s < 1$.

\end{proof}

\section{Finite speed of propagations}\label{sec:FSP}

In this section, we use the entropy inequality \cref{rt-2} to deduce that the support of  solutions to \cref{eq:ft} propagates with finite speed.

\begin{proof}[Proof of \cref{th:speed}]

 Let us denote by
$$
A_T(S) \coloneqq  \iint_ {\Omega_T(S)}{u^2(x,t) \,\d x \,\d t}
$$
the energy of the solution $u$ on  $\Omega(S)$.
Then \cref{rt-2} can be written as
\begin{align}\label{rt-2-00}
\begin{aligned}
&\mathop {\sup}_{t \in (0,T)} \int_{\Omega(S+ \delta)}{G_0(u) \, \mathrm dx }   +
\frac12 \iint_{\Omega_T(S +\delta) }{ |(-\Delta)^{\frac{s+1}{2}}(u \,\psi_{S,\delta } ) |^2   \,\d x\,  \d t}  \\&\leqslant  
 \int_{\Omega(S)}{G_0(u_0) \,\d x} +
 \frac{C}{\delta^{2(s+1) }} A_T(S) + \frac{C}{\delta^{2(s+1) }} A_T^{\varpi}(S)  \eqqcolon  R_T(S,\delta).
 \end{aligned}
\end{align}

We will prove the property of finite propagation speed by contradiction. Specifically, assume that for all
$t \in (0,T]$, the support of $u(\cdot, t)$ is $\bar{\Omega}$, i.\,e., $\operatorname{supp}u(\cdot, t) = \bar{\Omega}$
for every $t \in (0,T]$. Therefore, we assume that $u(\cdot, t)>0$  in $\bar{\Omega}$ and for any  $t\in (0,T]$.
The argument will show that there exists $T^{**}>0$ and a function $d=d(T)<R$ such that
$$
A_T(S) = 0\qquad\text{for all }T\in [0,T^{**}] \text{ and for all }  S\geqslant   d(T).
$$
This is clearly in contradiction with the assumed $u >0$ in $\bar{\Omega}$.

The entropy inequality \cref{eq:entropy} and Sobolev embeddings give that the solution $u$ is {H\"{o}lder continuous in $\bar{\Omega}$ for a.\,e. $t \in (0,T)$. Thus, by the Weierstrass extreme value theorem and our assumption, there exists $ c(t) \coloneqq \mathop {\min}_{x \in \bar{\Omega}} u(t,x) \in L^2(0,T)$ such that
$$
u(\cdot, t)\geqslant  c(t) > 0 \qquad \text{ in }  \bar{\Omega} \text{ for  a.\,e. } t \in (0,T].
$$ }
Consequently,
$$
A_T(S) \geqslant  \frac{\pi^{\frac{d}{2}}}{\Gamma( \frac{d}{2} + 1 )}R^{d-1}(R-S)
\int_0^T {{c^2(t)} \,dt} \eqqcolon  C_0(T) (R - S).
$$

Let $\gamma_T:[0,R]\to [0,+\infty)$ be a non-increasing function such that
\begin{equation}\label{ddr}
\begin{cases}
0 < \gamma_T(0) \leqslant   C_0(T) R, \\
\gamma_T(S) \leqslant   C_0(T) (R-S)  &\text{for all } S\in[0,R],\\
\gamma_T(S) > 0 & \text{for all }  S\in[0,R),\\
\gamma_T(S + \delta)  \leqslant   \kappa  \gamma_T^{\beta + \frac{\beta}{\lambda} }(S)  &\text{for all }  S \geqslant  0, \ \delta > 0,
\end{cases}
\end{equation}
with $0 < \gamma_T(0) < \min\{1, C_0(T) \}R $, where $\lambda>0$ is a free parameter while $\beta$ and $\kappa$ verify
\begin{equation} \label{eq:parameters}
\beta\coloneqq  1+\frac{n(1-\theta) }{ 2-n  } > 1, \qquad
\kappa \in \Bigl(0,  \frac{1 - \frac{\gamma_T(0)}{R} }{ \gamma_T^{\beta -1 + \frac{\beta}{\lambda}}(0)} \Bigr).
\end{equation}
We observe that a function satisfying the assumptions \cref{ddr} exists, see \cref{ex:examplegamma}.
Thanks to \cref{lem-st-n} the function $\gamma_T$ also verifies $\gamma_T(S) \equiv 0 $ for all $S \geqslant  R $.
As a result, we have
\begin{equation}\label{as-b}
 A_T(S) \geqslant  \gamma_T(S )   \qquad\text{for all } T >0, \  S \in [0,R],
\end{equation}
and, thus, for any $S\in [0,R]$ and for any $T>0$, calling $\nu \coloneqq 1-\varpi>0$,
\begin{equation}\label{eq:A_T}
A_T(S) +  A_T^{\varpi}(S) \leqslant   \frac{\max\left\{A_T^{\nu}(0),1\right\}}{\gamma^{\nu}_T(S)} A_T(S) \eqqcolon \tilde{C}_T(S) A_T(S).
\end{equation}
Estimate \cref{rt-2-00} becomes
\begin{equation}\label{e-2-004}
\begin{aligned}
&{\mathop {\sup}_{t \in (0,T)} \int_{\Omega(S+ \delta)}{G_0(u) \, \mathrm dx } }  +
\frac{1}{2} \iint _{\Omega_T(S+ \delta)}{ \bigl|(-\Delta)^{\frac{s+1}{2}}(u \,\psi_{S,\delta } )  \bigr|^2   \, \mathrm dx \, \mathrm dt}
\\ & \leqslant   \int_{\Omega(S)}{G_0(u_0) \,\mathrm dx} +
\frac{\tilde{C}_T(S)}{ \delta^{2(s+1)  } } A_T(S) .
\end{aligned}
\end{equation}

{Now, we show that for $n \in (1,2)$ there is a positive constant $C$ depending on $n$ only
such that
\begin{equation}\label{entr-n}
 \int_{\Omega(S)}{  u^{2-n}  \, \mathrm dx }  \leqslant   C\, R_T(S,\delta).
\end{equation}
We start with the inequality
$$
y(S) \coloneqq  \int_{\Omega(S)}{ u^{2-n} \, \mathrm dx }  \leqslant   |\Omega(S)|^{n-1} \Bigl( \int_{\Omega(S)}{ u  \, \mathrm dx } \Bigr)^{2-n},
$$
from which we deduce that
$$
 \int_{\Omega(S)}{G_0(u) \, \mathrm dx } \geqslant  F(y(S)) \coloneqq\frac{|\Omega(S)|^{- \frac{n-1}{2-n}}}{n-1} y^{\frac{1}{2-n}}(S) -
 \frac{1}{(2-n)(n-1)} y(S) + \frac{|\Omega(S)|}{2-n}.
$$
The function
\[
F(y) = \frac{|\Omega(S)|^{- \frac{n-1}{2-n}}}{n-1} y^{\frac{1}{2-n}} -
 \frac{1}{(2-n)(n-1)} y + \frac{|\Omega(S)|}{2-n}
\]
is non-negative and convex for $y\ge 0$. Moreover, it satisfies that $F(\abs{\Omega(S)})=F'(\abs{\Omega(S)}) = 0$ and
$F''(\abs{\Omega(S)}) = \frac{1}{(2-n)^2} |\Omega(S)|^{- 1}$. Therefore
$$
F(y) \geqslant  \frac{1}{2(2-n)^2} |\Omega(S)|^{- 1} (y -   |\Omega(S)| )^2 \qquad \text{for all } y\ge 0.
$$
}
Using this lower bound, we have
$$
\left( \int_{\Omega(S)}{ (u^{2-n} -1 ) \, \mathrm dx }\right)^{2}  \leqslant   2(2-n)^2 |\Omega(S)|
  \int_{\Omega(S)}{G_0(u) \, \mathrm dx },
$$
whence
\begin{align*}
 \int_{\Omega(S)}{ u^{2-n} \, \mathrm dx }   &\leqslant   \sqrt{2}(2-n) \Bigl( |\Omega(S)|
  \int_{\Omega(S)}{G_0(u) \, \mathrm dx } \Bigr)^{\frac{1}{2}} +   |\Omega(S)|\\
   &=     
 \sqrt{2}(2-n)^{\frac{3}{2}} \Bigl( \int_{\Omega(S)}{G_0(0) \, \mathrm dx }
  \int_{\Omega(S)}{G_0(u) \, \mathrm dx } \Bigr)^{\frac{1}{2}} +   (2-n)\int_{\Omega(S)}{G_0(0) \, \mathrm dx }\\
  & \leqslant   
2(2-n) \max \{ 1, \sqrt{2(2-n)}\} \max \Bigl\{ \int_{\Omega(S)}{G_0(u) \, \mathrm dx }, \int_{\Omega(S)}{G_0(0) \, \mathrm dx } \Bigr\}.
\end{align*}
As a result, by \cref{rt-2-00}, we obtain \cref{entr-n}.

Applying the Gagliardo--Nirenberg interpolation inequality (see \cref{G-N-nn})
in the region $\Omega$ to the function $v(\cdot,t) \coloneqq  u(\cdot,t)  $ with $ b =
2-n$  and $\theta  = \frac{nd}{nd + 2 (2-n)(s+1)}$, after
integrating in time and using \cref{eq:entropy}, we get
$$
A_T(0) \leqslant   C\,T^{\mu} \|u_0\|^{(2-n)\beta}_{L^{2-n}(\Omega)} + C\,T  \|u_0\|^{2}_{L^{2-n}(\Omega)},
$$
whence
\begin{equation}\label{a-in}
A_T(0) \leqslant   C\,T^{\mu} \|u_0\|^{(2-n)\beta}_{L^{2-n}(\Omega)} {\text{ for small enough }} T > 0.
\end{equation}

Next, using the homogeneous Gagliardo--Nirenberg interpolation inequality (see \cref{G-N-nn})
in the region $\Omega(S+  \delta)$ with the function $v(\cdot,t) \coloneqq  u(\cdot,t)\psi _{S+\delta,\delta }(\cdot) $ with $b =
2-n$  and $\theta  = \frac{nd}{nd + 2 (2-n)(s+1)}$, we find that
\begin{equation}\label{rrr-ll}
  \|   u \psi _{S+\delta,\delta } \|^2_{L^2(\Omega(S+\delta))}  \leqslant  
C \|  (-\Delta)^{\frac{s+1}{2}} ( u \psi _{S+\delta,\delta } ) \|^{ 2\theta }_{L^{2}  (\Omega)}  \| u \psi _{S+\delta,\delta } \|^{ 2 (1-\theta) }_{L^{2-n}(\Omega(S+ \delta))}.
\end{equation}
From \cref{rrr-ll}, using \cref{rrr-int}, it follows that
\begin{align*}
  \|   u \psi _{S+\delta,\delta } \|^2_{L^2(\Omega(S+\delta))} &\leqslant  
C \|  (-\Delta)^{\frac{s+1}{2}} ( u \psi _{S+\delta,\delta } ) \|^{ 2\theta }_{L^{2}  (\Omega(S+2\delta))}  \| u \psi _{S+\delta,\delta } \|^{ 2 (1-\theta) }_{L^{2-n}(\Omega(S+ \delta))}    \\
& +( \frac{C}{S + \delta})^{ 2\theta(s+1 ) } \|    u \psi _{S+\delta,\delta }  \|^{ 2\theta  }_{L^{2}  (\Omega(S+ \delta))}  \| u \psi _{S+\delta,\delta } \|^{ 2 (1-\theta) }_{L^{2-n}(\Omega(S+ \delta))} ,
\end{align*}
whence, using Young's inequality, we deduce that
\begin{equation} \label{int-3}
\begin{aligned}
\|   u  \|^2_{L^2(\Omega(S+ 2\delta))} & \leqslant  
C \|  (-\Delta)^{\frac{s+1}{2}} ( u \psi _{S+\delta,\delta } ) \|^{ 2\theta  }_{L^{2}  (\Omega(S+ 2\delta))}  \| u  \|^{ 2 (1-\theta) }_{L^{2-n}(\Omega(S+ \delta))}
 \\ &  + \frac{C}{(S + \delta)^{\frac{2\theta(s+1)}{ 1-\theta  }}} \| u  \|^{2}_{L^{2-n}(\Omega(S+ \delta))} .
\end{aligned}
\end{equation}
Integrating  \cref{int-3} with respect to time, using H\"older's inequality, \cref{rt-2-00} and
\cref{entr-n}, exchanging $S + \delta$
with $S$, we arrive at the following relation:
$$
A_T(S + \delta) \leqslant   C\,T^{ 1-\theta  } R^{1+\frac{n(1-\theta) }{ 2-n  }}_T(S,\delta)
+ \frac{C}{(S + \delta)^{\frac{2\theta(s+1)}{ 1-\theta  }}} T  R^{1+\frac{n}{ 2-n}}_T(S,\delta).
$$

Now, we consider $S\geqslant  r_0$ which implies that $u_0\equiv 0$ on $\Omega(S)$. Thus $G_0(u_0) = G_0(0)$. Consequently,
\begin{align}\label{eq:estg0}
{\mathfrak{G}(s) \coloneqq}\int_{\Omega(S)} G_0(u_0) \, \mathrm d x  = \frac{ | \Omega(S)|}{2-n}
\leqslant    \frac{d \,\pi^{\frac{d}{2}}}{(2-n)\Gamma( \frac{d}{2} + 1 )} R^{d - 1}  (R - S)
\leqslant   {\frac{d}{(2-n)C_0(T)} A_T(S)}
\end{align}
for all $S \geqslant  r_0$.  We recall that
$$
R_T(S,\delta) \leqslant   \frac{C}{ \delta^{2(s+1)} } (A_T(0) + A_T^{\varpi}(0)) {+ \mathfrak{G}(0)}.
$$
Therefore, we can find $T^*=T^*(\delta)$ such that, for $T\leqslant   T^*$ (and $S\geqslant  r_0$), 
\begin{equation}\label{eq:fsp_energy1}
A_T(S + \delta) \leqslant   C\,T^{ 1-\theta  } R^{1+\frac{n(1-\theta) }{ 2-n  }}_T(S,\delta).
\end{equation}
Moreover, due to \cref{eq:A_T}, we deduce that
\begin{equation}\label{e-12}
A_T(S +\delta) \leqslant    C\, T^{\mu} \bigl(\tilde{C}_T(S) \delta^{- \tilde{\alpha} } A_T(S) {+ \mathfrak{G}(S)} \bigr)^{\beta} \text{ for all } S \geqslant  r_0,
\end{equation}
where we recall (see \cref{eq:parameters})
$$
\mu =  1-\theta  , \ \tilde{\alpha} = 2(s+1) , \
\beta = 1+\frac{n(1-\theta) }{ 2-n  } > 1.
$$
Consequently, calling $\tilde{A}_T(S ) \coloneqq  \tilde{C}_T^{ - \lambda}(S) A_T(S)$, we have
$$
\tilde{A}_T(S +\delta) \leqslant    C\, \kappa^{\lambda \nu}  T^{\mu} \bigl( \delta^{-\tilde{\alpha}} \tilde{A}_T(S)
{+ \mathfrak{G}(S)\tilde{C}_T^{ - \lambda -1}(S) }  \bigr)^{\beta}
\text{ for all } S \geqslant  r_0.
$$
By \cref{eq:estg0},
$$
\mathfrak{G}(S)\tilde{C}_T^{ - \lambda -1}(S) =
\mathfrak{G}(S)\tilde{C}_T^{-1}(S) \frac{\tilde{A}_T(S)}{A_T(S)} \leqslant   C\,
\frac{\gamma_T^{\nu }(S) }{C_0(T)}\tilde{A}_T(S) \leqslant  
\frac{C}{ C^{1-\nu}_0(T)} \tilde{A}_T(S) \leqslant  
\frac{C }{\delta^{ \tilde{\alpha}}}\tilde{A}_T(S)
$$
for small enough $\delta > 0$. As a result, we obtain
\begin{equation}\label{e-12-000}
\tilde{A}_T(S +\delta) \leqslant    C_4 \kappa^{\lambda \nu}  T^{\mu} \bigl( \delta^{-\tilde{\alpha}} \tilde{A}_T(S) \bigr)^{\beta}
\text{ for all } S \geqslant  r_0.
\end{equation}
Therefore, we are in the position to apply of the Stampacchia's iteration lemma (in the form contained in \cref{L-1}) to \cref{e-12-000} with $\alpha = \tilde{\alpha} \beta$, $\beta  > 1$, and $C = C_4 \kappa^{\lambda \nu}  T^{\mu}$.
We obtain that
\begin{equation}\label{e-13}
\tilde{A}_T(S) = 0  \qquad\text{for all } S \geqslant  d(T) ,
\end{equation}
where
\[
d(T) = r_0 + 2^{\frac{\beta}{\beta -1}} (C_4 \kappa^{\lambda \nu}  T^{\mu}  \tilde{A}_T^{\beta -1}(r_0) )^{\frac{1}{\tilde{\alpha}\beta}}.
\]
{As
$$
\tilde{A}_T(r_0) \leqslant   C\, \gamma_T^{\lambda \nu}(r_0) A_T(r_0) \leqslant  
C\,\gamma_T^{\lambda \nu}(0) A_T(0) \leqslant   C \, \gamma_T^{\lambda \nu}(0) T^{\mu} \|u_0\|_{2-n}^{(2-  n)\beta}
$$
then, taking $ 0 < \kappa \leqslant    \gamma_T^{1-\beta}(0)$, an upper bound for $d(T)$ is
$$
d(T) \leqslant   r_0 + C_5\, T^{\frac{\mu }{\tilde{\alpha}}   } =
r_0 + C_5 T^{\frac{1-\theta}{  2(s+1)  }  } .
$$
For $T \leqslant   T^{**}\coloneqq \min \bigl\{T^*, \bigl(\frac{R-r_0 }{C_5}\bigr)^{\frac{2(s+1)}{ 1-\theta } }\bigr\}$ we have that
$$
C_5 \, T^{\frac{  1-\theta  }{  2(s+1)  }  }  < R - r_0.
$$}
Thus, for $T\leqslant   T^{**}$ we have $d(T) <R$.
On the other hand, by \cref{as-b} we have
$$
\tilde{A}_T(S) \geqslant  C \, \gamma_T^{1 + \lambda\nu}(S) > 0 \text{ for all } S \in [0,R).
$$
So, since $\tilde{A}_T(S) = 0$ for all $S \geqslant  d(T) $,  and  $d(T) <  R $  for all {$T \in [0,T^{**}]$}, we obtain the contradiction to \cref{as-b}.

Next, we look for an exact estimate of interface speed.
Using the homogeneous Gagliardo--Nirenberg interpolation inequality (see \cref{G-N-nn})
in the region $\Omega(S+\delta)$ with the function $v(\cdot,t) \coloneqq  u(\cdot,t)\psi _{S+\delta,\delta }(.) $ with $ b =
1 $  and $\theta  = \frac{d}{d+2(s+1)}$, we find that
\begin{align*}
\| u \psi _{S+\delta,\delta }  \|^2_{L^2(\Omega(S+ \delta))} &\leqslant    C\, M^{\frac{4(s+1)}{d+ 2(s+1)}}  \|(-\Delta)^{\frac{s+1}{2}} ( u \psi _{S+\delta,\delta } ) \|^{\frac{2d}{d+ 2(s+1)}}_{L^{2}  (\Omega)}  \\ &\leqslant  
C\, M^{\frac{4(s+1)}{d+ 2(s+1)}}  \|(-\Delta)^{\frac{s+1}{2}} ( u \psi _{S+\delta,\delta } ) \|^{\frac{2d}{d+ 2(s+1)}}_{L^{2}  (\Omega(S+ 2\delta))} \\ \qquad +  \frac{C M^{\frac{4(s+1)}{d+ 2(s+1)}}}{(S+ \delta)^\frac{2d(s+1)}{d+ 2(s+1)} } \| u \psi _{S+\delta,\delta }  \|^{\frac{2d}{d+ 2(s+1)}}_{L^{2}  (\Omega(S+ \delta))}  ,
\end{align*}
whence
\begin{equation}\label{int-5}
\| u  \|^2_{L^2(\Omega(S+2\delta))}   \leqslant    C\, M^{\frac{4(s+1)}{d+ 2(s+1)}}  \|(-\Delta)^{\frac{s+1}{2}} ( u \psi _{S+\delta,\delta } ) \|^{\frac{2d}{d+ 2(s+1)}}_{L^{2}  (\Omega(S+ 2\delta))} +  \frac{C M^{2}}{ (S+ \delta)^{d}} .
\end{equation}
Integrating  \cref{int-5} with respect to time, using H\"older's inequality and \cref{e-2-004},
exchanging $2\delta$ with $\delta$, we arrive at the following
relation:
\begin{equation}\label{e-12-2}
A_T(S + \delta) \leqslant    C_6(M) T^{\frac{2(s+1)}{d +2(s+1)}} \bigl( \delta^{-\tilde{\alpha}} \tilde{C}_T(S)  A_T(S) \bigr)^{ \frac{d}{d +2(s+1)}}
\end{equation}
for $S > 0$ and enough small $T >0$. Hence, by \cref{e-12-2} for
$ \tilde{A}_T(S ) \coloneqq  \tilde{C}_T^{ - \lambda }(S) A_T(S) $ we have
\begin{equation}\label{e-12-222}
\tilde{A}_T(S +\delta) \leqslant    C_7(M)  \kappa^{\lambda \nu}
 T^{\frac{2(s+1)}{d +2(s+1)} } \bigl( \delta^{- \tilde{\alpha}} \tilde{A}_T(S) \bigr)^{ \frac{d}{d +2(s+1)} }
\end{equation}
for $S > 0 $. Applying \cref{L-1}
to \cref{e-12-222}
with ${\alpha} =\frac{d\tilde{\alpha}}{d +2(s+1)}  $, ${\beta} =  \frac{d}{d +2(s+1)}  < 1$, and $C =  C_7(M)  \kappa^{\lambda \nu} T^{\frac{2(s+1)}{d +2(s+1)} } $, we find that
\begin{equation}\label{int-6}
\tilde{A}_T(S) \leqslant   C_8(M) T \, S^{- \frac{ d \tilde{\alpha}}{2(s+1)}},  \qquad\text{for all } S > 0,
\end{equation}
where $C_8(M) = 2^{\frac{d \tilde{\alpha} (d+2(s+1))}{4(s+1)^2}}C_7^{\frac{ d+2(s+1) }{2(s+1)}}(M)  \kappa^{ \frac{ \lambda \nu ( d+2(s+1)) }{2(s+1)} }$.
From \cref{e-13}, using  \cref{int-6} with $S = \Gamma(T) \coloneqq  d(T)- r_0 \leqslant   r_0$, we arrive at
$$
\Gamma(T) \leqslant    2^{\frac{\beta}{\beta -1}} \bigl[C_4 T^{\mu}
 \kappa^{\lambda\nu} \tilde{A}_T^{\beta -1}(\Gamma(T)) \bigr]^{\frac{1}{ \tilde{\alpha}\beta}}
\leqslant      C_9(M) T^{\frac{\mu + \beta -1}{\tilde{\alpha} \beta}} (\Gamma(T))^{-\frac{ d( \beta -1 )}{ 2\beta(s+1) }},
$$
where $ C_9(M) = 2^{\frac{\beta}{\beta -1}}C_4^{\frac{1}{\tilde{\alpha} \beta}}  \kappa^{\frac{\lambda\nu}{\tilde{\alpha} \beta}}
C^{\frac{\beta -1}{ \tilde{\alpha}\beta}}_8(M) $. So, {taking $0 < \kappa \leqslant   1$,
$$
 \Gamma(T) \leqslant  
  C^{\frac{2 \beta (s+1)}{ 2\beta(s+1) + d(\beta-1)}}_9(M)
T^{ \frac{2(s+1)(\mu + \beta -1) }{ \tilde{\alpha} ( 2\beta(s+1) + d(\beta-1) ) } } \leqslant  
C_{10}(M) T^{ \frac{2(s+1)}{ \tilde{\alpha} (  n d + 2(s + 1))}} =
C_{10}(M) T^{ \frac{1}{   nd + 2(s+1)   }} ,
$$
where
$$
C_{10}(M) = \bigl[ 2^{\frac{\beta}{\beta -1}}C_4^{\frac{1}{\tilde{\alpha} \beta}} \bigl( 2^{\frac{d \tilde{\alpha} (d+2(s+1))}{4(s+1)^2}}C_7^{\frac{ d+2(s+1) }{2(s+1)}}(M) \bigr)^{\frac{\beta -1}{ \tilde{\alpha}\beta}}  \bigr]^{\frac{2 \beta (s+1)}{ 2\beta(s+1) + d(\beta-1)}}.
$$ }
As a result, we find that
$$
d(T) \leqslant    r_0 + C_{10}(M) T^{ \frac{1}{   nd + 2(s+1)   }}  \qquad\text{for all } T \leqslant   T^*.
$$
\end{proof}

\section{Lower-bound on the waiting time} \label{sec:LWT}

We now turn to the proof of the optimal lower bounds on waiting times.

\begin{proof}[Proof of \cref{th:wt}]
We assume that $\supp u_0 \subset \Omega \setminus \Omega(r_0)$.
We prove that there exists a time $T_0\in (0,T^{*}]$ such that we have
\begin{align*}
u(\cdot, t) = 0 \quad \text{ in } \Omega(r_0) \quad \text{for}\quad 0 < t < T_0.
\end{align*}
With the same notations as in the proof of \cref{th:speed}, we recall that we have the refined entropy inequality
\begin{equation*}
\begin{aligned}
&\int_{\Omega( S + \delta)}{G_0(u) \dd x}  +
C \iint_{\Omega_T(S+\delta)} { | (-\Delta)^{\frac{s+1}{2}} (u \, \psi_{S ,\delta}) |^{2} \dd x \dd t}  \\
&\leqslant   C \int_{\Omega( S)}{G_0(u_0) \dd x} +
\frac{\tilde{C}_T(S)}{\delta^{2(s+1)}}   \int_{0}^T { \| u \|^{2}_{L^{2}(\Omega( S)) } \dd t}.
\end{aligned}
\end{equation*}
Arguing as in \cref{eq:estg0}, {taking $S\geqslant  r_0-\delta$ and $0 < \kappa \leqslant   1$, we arrive at
\begin{align*}
\begin{aligned}
\tilde{A}_T(S+\delta) &\leqslant    C_4   T^{\mu} \left( \delta^{- 2(s+1) } \left( \int_{\Omega(S)} \delta^{2(s+1)} |G_0(u_0)-G_0(0)| \dd x  + \tilde{A}_T(S) \right) \right)^{\beta} \\
& \leqslant   C_4 T^{\mu} \left( \delta^{-2(s+1)} \bigl(  \mathcal{S} \, \delta^{\sigma}  + \tilde{A}_T(S) \bigr) \right)^{\beta},
\end{aligned}
\end{align*}
where we introduced the notation
\begin{align*}
&\mu =   1-\theta , \quad \theta  = \frac{nd}{nd + 2 (2-n)(s+1)}, \quad
\beta  =  1 + \frac{n(1-\theta) }{ 2-n  }, \quad \sigma  = d + 2(s+1) + (2-n)\gamma, \\
& \mathcal{S} \coloneqq  \sup_{\delta>0} \delta^{-\gamma(2-n)} \fint_{ B_{r_0}(0) \setminus B_{r_0 -\delta}(0) } |G(u_0(x))-G_0(0)| \dd x.
\end{align*} }

Arguing as in \cite{GiacomelliGruen}, by relying on Stampacchia's lemma (see  \cref{lem:stampacchia2}),
we have that there exists a time $T_0$ such that
\begin{align*}
\tilde{A}_{T_0} (r_0) \equiv 0  \Rightarrow  \int_0^{T_0} \int_{ \Omega(r_0)} u^2(t,x) \dd x \dd t \equiv 0
\end{align*}
provided
$$
d + 2(s+1) + (2-n)\gamma \geqslant   \frac{2\beta(s+1)}{\beta-1},
$$
i.\,e., $\gamma \geqslant   \frac{2(s+1)}{n}$,  and that the following lower bound holds:
\begin{align*}
T_0 \geqslant  C \mathcal{S}^{-\frac{\beta-1}{\mu}} = C \mathcal{S}^{-\frac{n}{2-n}}.
\end{align*}

\end{proof}

\appendix
\section{Technical lemmas} \label{app:lemmas}

In this appendix, we collect some technical results that have been used throughout the paper.

\subsection{Interpolation inequalities and iteration lemmas}

We start with an interpolation inequality of Gagliardo--Nirenberg type (inspired by \cite[Proposition A.1]{DalPassoGiacomelliShishkov}).

\begin{lemma}[Gagliardo--Nirenberg-type inequality] \label{G-N-nn}
 If $\Omega  \subset \mathbb{R}^N $ is a bounded
domain with piecewise-smooth boundary, $b \in (0, 2)$  and $s \in (0,1)$, then there
exist  positive constants  $C_1$ and $C_2$ ($C_2 =0$ if $\Omega$ is unbounded or $v(x)$ has a compact support)
depending only on $\Omega ,\ s,\ b,$
and $N$ such that the following inequality is valid for every
$v(x) \in H^{s+1}(\Omega ) \cap L^b (\Omega )$:
$$
\| v  \|_{L^2 (\Omega )}  \leqslant   C_1  \|
(-\Delta)^{\frac{s+1}{2}} v  \|_{L^2 (\Omega )}^\theta  \| v  \|_{L^b
(\Omega )}^{1 - \theta } + C_2 \| v  \|_{L^b
(\Omega )}, \quad  \theta  = \frac{{\frac{1} {b}   - \frac{1}
{2}}} {{\frac{1} {b} + \frac{s+1} {N} - \frac{1} {2}}} \in  [0,1 ) .
$$
\end{lemma}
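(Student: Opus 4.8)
The plan is to reduce the estimate to the \emph{homogeneous} fractional Gagliardo--Nirenberg inequality on $\mathbb{R}^N$ via an extension argument, and to absorb the resulting lower-order contribution by Young's inequality. First I would note that, by \cref{prop:2r} together with the identification of the $H^{s+1}_N(\Omega)$-norm with the $H^{s+1}(\Omega)$-norm recalled in \cref{PSP}, one has $\|v\|_{H^{s+1}(\Omega)}^2=\|v\|_{L^2(\Omega)}^2+\|(-\Delta)^{\frac{s+1}{2}}v\|_{L^2(\Omega)}^2$ for $v$ in the relevant space (when $s\geqslant\tfrac12$ the seminorm on the right of the lemma is finite only if $v$ satisfies the Neumann condition, which is the case in the applications below). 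Then I would extend $v$ to a compactly supported $\tilde v\in H^{s+1}(\mathbb{R}^N)\cap L^b(\mathbb{R}^N)$ with $\tilde v=v$ on $\Omega$ and $\|\tilde v\|_{H^{s+1}(\mathbb{R}^N)}\leqslant C\|v\|_{H^{s+1}(\Omega)}$, $\|\tilde v\|_{L^b(\mathbb{R}^N)}\leqslant C\|v\|_{L^b(\Omega)}$ --- e.g.\ a Stein-type extension (available since $\partial\Omega$ is piecewise smooth) followed by multiplication by a fixed smooth cut-off, which is bounded on $H^{s+1}$ because $s+1<2$.

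The heart of the matter is then the homogeneous inequality $\|\tilde v\|_{L^2(\mathbb{R}^N)}\leqslant C\|(-\Delta)^{\frac{s+1}{2}}\tilde v\|_{L^2(\mathbb{R}^N)}^{\theta}\|\tilde v\|_{L^b(\mathbb{R}^N)}^{1-\theta}$, which I would obtain by a Fourier frequency cut-off at scale $\rho>0$. Writing $P_{\leqslant\rho}$ and $P_{>\rho}$ for the Fourier projections onto $\{|\xi|\leqslant\rho\}$ and $\{|\xi|>\rho\}$, Plancherel's theorem gives $\|P_{>\rho}\tilde v\|_{L^2}\leqslant\rho^{-(s+1)}\|(-\Delta)^{\frac{s+1}{2}}\tilde v\|_{L^2}$, while the low frequencies obey the Bernstein--Nikolskii bound $\|P_{\leqslant\rho}\tilde v\|_{L^2}\leqslant C\rho^{N(\frac1b-\frac12)}\|\tilde v\|_{L^b}$, valid (with some care) even in the quasi-Banach range $b<1$ because $P_{\leqslant\rho}\tilde v$ is band-limited. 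Summing the two bounds and optimising in $\rho$ --- i.e.\ choosing $\rho$ so that $\rho^{-(s+1)}\|(-\Delta)^{\frac{s+1}{2}}\tilde v\|_{L^2}$ and $\rho^{N(\frac1b-\frac12)}\|\tilde v\|_{L^b}$ coincide --- yields the inequality with $\theta=\frac{N(\frac1b-\frac12)}{N(\frac1b-\frac12)+s+1}$, which is exactly the stated exponent after dividing numerator and denominator by $N$. This already establishes the lemma with $C_2=0$ whenever $\Omega$ is unbounded or $v$ is compactly supported.

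For a general bounded $\Omega$, combining the above with the extension bounds gives $\|v\|_{L^2(\Omega)}\leqslant\|\tilde v\|_{L^2(\mathbb{R}^N)}\leqslant C\bigl(\|v\|_{L^2(\Omega)}+\|(-\Delta)^{\frac{s+1}{2}}v\|_{L^2(\Omega)}\bigr)^{\theta}\|v\|_{L^b(\Omega)}^{1-\theta}$; using $(x+y)^{\theta}\leqslant x^{\theta}+y^{\theta}$ and then Young's inequality $C\|v\|_{L^2(\Omega)}^{\theta}\|v\|_{L^b(\Omega)}^{1-\theta}\leqslant\tfrac12\|v\|_{L^2(\Omega)}+C'\|v\|_{L^b(\Omega)}$ (licit since $\theta\in(0,1)$, which holds as $b<2$) one absorbs the $\|v\|_{L^2(\Omega)}$-term and obtains the asserted inequality. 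I expect the main obstacle to be the low-frequency Bernstein--Nikolskii estimate in the regime $b<1$ actually used in the paper ($b=2-n$): it is valid but not entirely routine. An alternative that sidesteps it is to first prove the inequality for $b\in[1,2)$ --- where a mean-value splitting $v=\fint_\Omega v+(v-\fint_\Omega v)$, Hölder's inequality for the constant part, and the standard heat-kernel/Bernstein estimate suffice --- and then descend to $b<1$ by inserting the Hölder interpolation $\|v\|_{L^1}\leqslant\|v\|_{L^b}^{b/(2-b)}\|v\|_{L^2}^{(2-2b)/(2-b)}$ into the $b=1$ version and again absorbing the $\|v\|_{L^2}$-power via Young; a short computation checks that this reproduces the exponent $\theta$. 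A secondary technical point is the existence of an extension operator bounded simultaneously on $H^{s+1}(\Omega)$ and on $L^b(\Omega)$ for a merely piecewise-smooth domain, which the Stein construction provides.
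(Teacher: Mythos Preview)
Your argument is correct but follows a genuinely different route from the paper's. The paper does \emph{not} extend to $\mathbb{R}^N$ or use Fourier analysis. Instead it quotes a classical Gagliardo--Nirenberg-type inequality on the domain (from \cite[Proposition~A.1]{DalPassoGiacomelliShishkov}, already formulated for the quasi-Banach range $b\in(0,2)$) in the special case $a=d=2$, $j=1$, which gives
\[
\|v\|_{L^2(\Omega)}\leqslant d_1\|\nabla v\|_{L^2(\Omega)}^{\theta_1}\|v\|_{L^b(\Omega)}^{1-\theta_1}+d_2\|v\|_{L^b(\Omega)},
\]
and then replaces $\|\nabla v\|_{L^2}=\|(-\Delta)^{1/2}v\|_{L^2}$ by the spectral interpolation \cref{dong-01} with $\beta=\tfrac12$, namely $\|\nabla v\|_{L^2}\leqslant\|(-\Delta)^{\frac{s+1}{2}}v\|_{L^2}^{1/(s+1)}\|v\|_{L^2}^{s/(s+1)}$. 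A final Young inequality absorbs the resulting $\|v\|_{L^2}$-power, exactly as in your last step. This is considerably shorter because the hard work (the $b<1$ regime and the domain localisation) is outsourced to the cited reference, and the passage from integer to fractional order is handled entirely by the spectral interpolation already set up in \cref{PSP}. Your approach, by contrast, rederives the homogeneous inequality from scratch via the Bernstein--Nikolskii frequency cut-off; it is more self-contained and makes the scaling transparent, but carries the extra burden you correctly flag --- the $L^b$-boundedness of the extension operator and the low-frequency Bernstein estimate when $b<1$ --- neither of which the paper needs to address.
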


\begin{proof}[Proof of \cref{G-N-nn}]
Using the interpolation inequality from \cite[Proposition A.1]{DalPassoGiacomelliShishkov}
\begin{equation}\label{g-n-0}
\| v  \|_{L^a (\Omega )}  \leqslant   d_1  \|
D^j v  \|_{L^d (\Omega )}^{\theta_1}  \| v  \|_{L^b
(\Omega )}^{1 - \theta_1 } + d_2 \| v  \|_{L^b
(\Omega )} , \quad \theta_1  = \frac{{\frac{1} {b}   - \frac{1}
{a}}} {{\frac{1} {b} + \frac{j} {N} - \frac{1} {d}}} \in  [0,1 ) ,
\end{equation}
where $a > 0$, $b \in (0,a)$, $j > 0$, and $\frac{1}{d} - \frac{j}{N} < \frac{1}{a}$. Moreover,
$d_2 = 0$ if $\Omega$ is unbounded or $v(x)$ has a compact support.

Using \cref{g-n-0} with $a = d =2$, $j =1$, and $b \in (0,2)$, we have
\begin{equation}\label{g-n-1}
\| v  \|_{L^2 (\Omega )}  \leqslant   d_1  \|
\nabla v  \|_{L^2 (\Omega )}^{\theta_1}  \| v  \|_{L^b
(\Omega )}^{1 - \theta_1 } + d_2 \| v  \|_{L^b
(\Omega )} , \quad \theta_1  = \frac{{\frac{1} {b}   - \frac{1}
{2}}} {{\frac{1} {b} + \frac{1} {N} - \frac{1} {2}}} \in  [0,1 ) .
\end{equation}
Since $\| \nabla v \|_{L^2 (\Omega )} = \| (-\Delta)^{\frac{1}{2}} v  \|_{L^2 (\Omega )}$, then,
using \cref{dong-01} with $\beta = \frac{1}{2}$, we get
\begin{equation}\label{g-n-2}
\| \nabla v  \|_{L^2(\Omega)} \leqslant    \| (-\Delta)^{\frac{s+1}{2}} v \|_{L^2(\Omega)}^{\theta_2} \| v \|_{L^2(\Omega)}^{1-\theta_2},
\quad \theta_2 = \frac{1}{s+1}.
\end{equation}
Plugging \cref{g-n-2} into  \cref{g-n-1}, we obtain
$$
\| v  \|_{L^2 (\Omega )}  \leqslant   d_1  \|
(-\Delta)^{\frac{s+1}{2}} v \|_{L^2 (\Omega )}^{\theta_1 \theta_2 } \| v \|_{L^2(\Omega)}^{\theta_1(1-\theta_2)}  \| v  \|_{L^b
(\Omega )}^{1 - \theta_1 } + d_2 \| v  \|_{L^b (\Omega )}.
$$
Applying Young's inequality with exponents $\frac{1}{\theta_1(1-\theta_2)}$ and $\frac{1}{1 - \theta_1(1-\theta_2)}$, we deduce that
$$
\| v  \|_{L^2 (\Omega )}  \leqslant   \tilde{d}_1  \|
(-\Delta)^{\frac{s+1}{2}} v \|_{L^2 (\Omega )}^{\frac{\theta_1 \theta_2 }{1 - \theta_1(1-\theta_2)}}  \| v  \|_{L^b
(\Omega )}^{\frac{1 - \theta_1}{1 - \theta_1(1-\theta_2)} } + \tilde{d}_2 \| v  \|_{L^b (\Omega )}.
$$
This completes the proof.
\end{proof}

Next, we recall Stampacchia's classical iteration lemma (see \cite[Lemme 4.1,p.14]{S1}) and an inhomogeneous version that has been introduced in \cite[Lemma 3.1]{DalPassoGiacomelliGruen}, \cite[Lemma 2.4]{GiacomelliGruen}, and \cite[Lemma 4]{MR1642176} (stated below as in \cite{GiacomelliGruen}). These are fundamental tools in studying interface evolution properties for degenerate parabolic equations via energy methods.

\begin{lemma}[Classical Stampacchia's lemma]\label{L-1}
Let $f(x)  $ be non-negative, non-increasing in $[x_0,+\infty)$ function. Assume that $f$ satisfies
\begin{equation}\label{s-1}
 f(y) \leqslant   \frac{C}{(y -x)^{\alpha}} f^{\beta}(x)  \quad  \text{ for } y > x \geqslant  x_0,
\end{equation}
where $C,\,\alpha,\, \beta$ are some positive constants. Then

(i) if  $\beta > 1$ we have
$$
f(y) = 0  \quad \text{ for all }  y \geqslant  x_0 + d,
$$
where $d^{\alpha} = C f^{\beta -1}(x_0) 2^{\frac{\alpha\beta}{\beta -1}}$;

(ii) if  $\beta = 1$ we get
$$
f(y) \leqslant   e^{1- \zeta(y -x_0)}f(x_0) \quad \text{ for all }  y \geqslant  x_0,
$$
where $ \zeta = (e\,C)^{- \frac{1}{\alpha}}$;

(iii) if  $\beta < 1$ we obtain
$$
f(y) \leqslant    2^{\frac{\mu}{1-\beta}} \bigl[C^{\frac{1}{1-\beta}} + (2\, x_0)^{\mu} f(x_0) \bigr] y^{-\mu} \quad \text{ for all }  y \geqslant  x_0 > 0,
$$
where $ \mu =  \frac{\alpha }{1- \beta}$.
\end{lemma}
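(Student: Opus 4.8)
The plan is to prove all three regimes by the same mechanism: a bootstrap along a well-chosen sequence of points $x_0<x_1<x_2<\dots$, applying the hypothesis \cref{s-1} repeatedly with $x=x_k$, $y=x_{k+1}$, and then using the monotonicity of $f$ to transfer the resulting bound on $f(x_k)$ to a bound on $f(y)$ for arbitrary $y$. For \textbf{part (i)} ($\beta>1$) I would take the dyadic sequence $x_k:=x_0+d(1-2^{-k})$, which increases to $x_0+d$ with increments $x_{k+1}-x_k=d\,2^{-(k+1)}$, so that \cref{s-1} gives $f(x_{k+1})\le C d^{-\alpha}2^{\alpha(k+1)}f(x_k)^\beta$. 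Setting $\mu:=\alpha/(\beta-1)$ and proving by induction that $f(x_k)\le 2^{-\mu k}f(x_0)$, the powers of $2$ combine as $\alpha-\mu\beta=-\mu$, and the inductive step closes \emph{with equality} precisely when $d^\alpha=C f(x_0)^{\beta-1}2^{\alpha\beta/(\beta-1)}$, i.e.\ for the stated value of $d$. Letting $k\to\infty$ gives $f(x_k)\to0$, and since $f$ is non-increasing and $x_k<x_0+d\le y$ for $y\ge x_0+d$, we conclude $f(y)\le\inf_k f(x_k)=0$.

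For \textbf{part (ii)} ($\beta=1$) the natural choice is the arithmetic sequence $x_k:=x_0+kh$ with step $h:=(eC)^{1/\alpha}$, for which \cref{s-1} reads $f(x_{k+1})\le Ch^{-\alpha}f(x_k)=e^{-1}f(x_k)$, hence $f(x_k)\le e^{-k}f(x_0)$. Given $y\ge x_0$, set $t:=(y-x_0)/h=\zeta(y-x_0)$ with $\zeta=(eC)^{-1/\alpha}$ and $k:=\lfloor t\rfloor\ge t-1$; then $x_k\le y$, so monotonicity yields $f(y)\le f(x_k)\le e^{-k}f(x_0)\le e^{1-t}f(x_0)$, which is the assertion.

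\textbf{Part (iii)} ($\beta<1$) is the one needing a trick, since a direct iteration no longer annihilates $f$: the device is a scaling substitution. Set $\mu:=\alpha/(1-\beta)$ and $\phi(x):=x^\mu f(x)$. Applying \cref{s-1} with $y=2x$ (so $y-x=x$) and multiplying by $(2x)^\mu$, the power of $x$ cancels because $\mu(1-\beta)=\alpha$, leaving the clean recursion $\phi(2x)\le 2^\mu C\,\phi(x)^\beta$. Iterating along $x_k:=2^k x_0$ and setting $a_k:=\phi(x_k)$, $B:=2^\mu C$, this is $a_{k+1}\le B a_k^\beta$; taking logarithms turns it into the affine contraction $\log a_{k+1}\le\log B+\beta\log a_k$, with contraction ratio $\beta\in(0,1)$ and fixed point $\frac{1}{1-\beta}\log B$, so that $a_k\le\max\{a_0,B^{1/(1-\beta)}\}$ for every $k$. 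Finally, for $y\ge x_0$ pick $k$ with $2^k x_0\le y<2^{k+1}x_0$; by monotonicity $f(y)\le f(2^k x_0)=(2^k x_0)^{-\mu}a_k$, and $2^k x_0>y/2$ turns the bound on $a_k$ into an estimate of the announced form $C\bigl(C^{1/(1-\beta)}+(2x_0)^\mu f(x_0)\bigr)y^{-\mu}$. Extracting the precise constant $2^{\mu/(1-\beta)}$ of the statement is then a matter of iterating on the right geometric sequence — equivalently, iterating \cref{s-1} downward from $y$ through the dyadic levels until the argument first enters $[x_0,2x_0)$, where the accumulated exponents of $C$, of $2$, and of $y$ sum to $\frac{1}{1-\beta}$, $\frac{\mu}{1-\beta}$, and $-\mu$ respectively — which I would carry out at the end.

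\textbf{Where the work is.} None of this is deep — it is the classical Stampacchia iteration — so the only genuine care required is bookkeeping: in (i), verifying that the stated $d$ is exactly what makes the geometric induction close; and in (iii), identifying the correct exponent $\mu$ and rescaling $\phi=x^\mu f$ so that the $x$-dependence cancels, and then tracking the powers of $2$ through the geometric sequence to land on the precise constant rather than merely a comparable one. Part (ii) is entirely routine once the step size is fixed.
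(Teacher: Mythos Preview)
The paper does not actually prove this lemma: it is stated in the appendix with a citation to Stampacchia's original notes \cite[Lemme~4.1, p.~14]{S1} and used as a black box. Your proposal is therefore not being compared to any argument in the paper, but it is the classical iteration proof and is correct in all three parts.

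Two small remarks on part~(iii). First, your intermediate bound $f(y)\le 2^\mu y^{-\mu}\max\{a_0,B^{1/(1-\beta)}\}$ with $a_0=x_0^\mu f(x_0)$ and $B=2^\mu C$ already implies the stated inequality, since $2^\mu\cdot 2^{\mu/(1-\beta)}C^{1/(1-\beta)}$ is larger than $2^{\mu/(1-\beta)}C^{1/(1-\beta)}$ only by the harmless factor $2^\mu$, and $2^\mu x_0^\mu f(x_0)\le 2^{\mu/(1-\beta)}(2x_0)^\mu f(x_0)$; so the ``downward dyadic'' refinement you allude to is not strictly needed to recover the statement as written (the constant in the lemma is not claimed to be sharp). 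Second, your log-contraction argument is clean and correct: the key point is that $a_{k+1}\le B a_k^\beta$ with $\beta\in(0,1)$ has the absorbing bound $a_k\le\max\{a_0,B^{1/(1-\beta)}\}$, which you verify by a one-line induction.
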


\begin{lemma}[Stampacchia-type lemma]\label{lem-st-n}
Let  $f : [0,+\infty) \to [0, +\infty)$ be a non-negative non-increasing function
such that
\begin{equation}\label{nnn-1}
f(s +\delta) \leqslant   \epsilon f^{\nu}(s) \text{ for all } s \in \mathbb{R}^+, \ \delta > 0,
\end{equation}
for  $\epsilon \in (0,f^{1-\nu}(0))$ and $\nu > 1$. Then
$$
f(s) \equiv 0 \text{ for all } s \geqslant  d\coloneqq  \frac{f(0)}{1 - \epsilon f^{\nu -1}(0)}.
$$
\end{lemma}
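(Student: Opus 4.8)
The plan is to prove the lemma by a direct iteration in which the step sizes are chosen so that their total length telescopes to at most $d$ instead of being left free. First I would dispose of the trivial case $f(0)=0$: since $f$ is non-negative and non-increasing, this forces $f\equiv 0$ and $d=0$, so there is nothing to prove. Hence assume $f(0)>0$ and set $q\coloneqq \epsilon\, f^{\nu-1}(0)$, which lies in $(0,1)$ precisely because of the standing assumption $\epsilon\in(0,f^{1-\nu}(0))$.

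Next I would introduce the scalar sequence $b_0\coloneqq f(0)$, $b_{k+1}\coloneqq \epsilon\, b_k^{\nu}$. Routine inductions give $b_k\le b_0$ and then $b_{k+1}=(\epsilon\, b_k^{\nu-1})\,b_k\le q\,b_k$, hence $0<b_k\le q^k f(0)$; in particular $b_k\to 0$ and $\sum_{k\ge 0}b_k\le f(0)/(1-q)=d$. I would then set $s_0\coloneqq 0$ and $s_{k+1}\coloneqq s_k+b_k$, so that $s_k=\sum_{j=0}^{k-1}b_j$ increases to $s_\infty\coloneqq\sum_{j\ge 0}b_j\le d$, with $s_\infty-s_k\ge b_k>0$ for every $k$. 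Applying the hypothesis \cref{nnn-1} at $s=s_k$ with $\delta=b_k>0$, and using that $z\mapsto z^{\nu}$ is non-decreasing on $[0,+\infty)$, an induction yields $f(s_k)\le b_k$ for all $k$: indeed $f(s_0)=b_0$, and $f(s_{k+1})=f(s_k+b_k)\le \epsilon\, f^{\nu}(s_k)\le \epsilon\, b_k^{\nu}=b_{k+1}$.

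To conclude, fix $s\ge d$. Then $s\ge d\ge s_\infty>s_k$ for every $k$, so monotonicity of $f$ gives $f(s)\le f(s_k)\le b_k$; letting $k\to\infty$ forces $f(s)=0$, i.e. $f\equiv 0$ on $[d,+\infty)$. I do not anticipate a genuine obstacle: the only point that needs care is the choice of increments $\delta_k=b_{k-1}$, which is exactly what makes the cumulative displacement $\sum_k\delta_k$ converge to a quantity $\le d$ rather than diverge, and this in turn rests on the geometric decay $b_{k+1}\le q\,b_k$, i.e. on $q<1$. (If $\mathbb{R}^+$ is understood as $(0,+\infty)$, one simply replaces $f(0)$ by $f(0^+)\le f(0)$ throughout and nothing else changes.)
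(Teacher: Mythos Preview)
Your proof is correct and follows essentially the same iteration strategy as the paper's: both arguments repeatedly apply the hypothesis with step sizes whose sum is at most $d=f(0)/(1-q)$, obtain $f(s_k)\to 0$ along the resulting sequence, and then invoke monotonicity of $f$ to conclude on $[d,\infty)$. The only cosmetic difference is that the paper normalizes to $g=f/f(0)$ and uses geometric increments $\delta_n=f(0)q^n$ (summing exactly to $d$), whereas you take the adaptive increments $\delta_k=b_k$ (summing to something $\le d$); the underlying mechanism is identical.
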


\begin{example}\label{ex:examplegamma}
A class of functions
$$
f(s) = \frac{f(0)}{d}  e^{\, \xi(s) \,\nu^{\frac{s}{\delta}}},
$$
for any non-positive function $\xi(s)$ such that
$$
\xi(s+\delta) \leqslant   \xi(s), \quad \text{ with } \xi(0) = \ln ( \epsilon f^{\nu -1}(0))^{\frac{1}{\nu -1}} \leqslant   0
\text{ and }  \xi(d^-) = - \infty,
$$
satisfies \cref{nnn-1} and $f(s) \equiv 0$ for all $s \geqslant  d  = (\epsilon f^{\nu-1}(0))^{\frac{1}{\nu-1}} \in (0,1]$.
For example, if $\xi(s) = \ln (d-s)_+$ then
$f(s) = \frac{f(0)}{d}  (d-s)_+^{\nu^{\frac{s}{\delta}}} $ and, moreover,
$f(s) \leqslant   \frac{f(0)}{d} (d-s)_+$ for all $s \in [0,d]$.
\end{example}

\begin{proof}[Proof of \cref{lem-st-n}]
Let $g(s) \coloneqq \frac{f(s)}{f(0)}$. Then from \cref{nnn-1} we find
that
$$
g(s +\delta) \leqslant   A\, g^{\nu}(s) \quad \text{ for all } s \in \mathbb{R}^+, \ \delta > 0, \ g(0) =1,
$$
where $A = \epsilon f^{\nu -1}(0)$. Define
$$
s_{n+1} = s_n + \delta_n,\ s_0 = 0, \ \delta_n = f(0)A^n \quad \text{ for } n =0,1,2,\ldots,
$$
so
$$
g(s_{n+1}) \leqslant   A g^{\nu}(s_n).
$$
Then, by iteration, in view of $g(0) = 1$, we arrive at
$$
g(s_n) \leqslant   A^{1 + \nu + \nu^2+..+\nu^{n-1}}  , \  \
s_n = \mathop {\sum}_{k = 0}^{n-1} \delta_k  = f(0) \mathop {\sum}_{k = 0}^{n-1} A^k  ,
$$
whence
$$
0 \leqslant   g \Bigl( f(0) \mathop {\sum}_{k = 0}^{n } A^k \Bigr) \leqslant   g(s_n) \leqslant     A^{\frac{\nu^n -1}{\nu -1}} .
$$
Selecting $A < 1$ and letting $n \to +\infty$, we obtain that  $g(d)  = 0$ with $d = \frac{f(0)}{1-A}$, hence $f(d) = 0$.
\end{proof}

\begin{lemma}[Inhomogeneous Stampacchia's lemma]\label{lem:stampacchia2}
	Let $f:[0,R] \to \R$ be a non-negative non-increasing function such that
	\begin{equation}\label{stamglem:h21}
	f(\xi) \leqslant  \frac{c_0}{(\xi-\eta)^\alpha}(f(\eta)+\widetilde{S}\cdot (R-\eta)^{\frac{\alpha}{\beta-1}})^\beta,
	\end{equation}
	where $0 \leqslant  \eta < \xi \leqslant   R$, $\widetilde{S} \geqslant  0$, and $c_0,\alpha,\beta >0$, and $\beta >1$. In addition, let us assume that
	\begin{equation}\label{stamglem:h22} R^{\frac{\alpha}{\beta -1}} \geqslant  \left( 2^{\frac{\beta(\alpha + \beta -1)}{\beta -1 }} c_0\right)^{\frac{1}{\beta-1}}  (f(0) + \widetilde{S} \cdot R^{\frac{\alpha}{\beta -1}}).
	\end{equation}
	Then $$f(R) = 0.$$
\end{lemma}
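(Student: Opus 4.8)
The plan is to run a dyadic Stampacchia iteration whose thresholds accumulate at $R$, carrying the inhomogeneity as an auxiliary summand engineered to decay at the same geometric rate as the main iterate. Write $\ell:=\tfrac{\alpha}{\beta-1}$ (so $\alpha=\ell(\beta-1)$, hence $\tfrac{\alpha+\beta-1}{\beta-1}=\ell+1$), and set $\eta_k:=R(1-2^{-k})$ for $k\in\mathbb N$, so that $\eta_0=0$, $\eta_k\nearrow R$, $R-\eta_k=R\,2^{-k}$, and $\eta_{k+1}-\eta_k=R\,2^{-(k+1)}$. Introduce the normalised quantity
\[
w_k:=R^{-\ell}\Bigl(f(\eta_k)+\widetilde S\,(R-\eta_k)^{\ell}\Bigr)=R^{-\ell}f(\eta_k)+\widetilde S\,2^{-\ell k}.
\]
Feeding $\eta=\eta_k$, $\xi=\eta_{k+1}$ into \eqref{stamglem:h21} and dividing by $R^{\ell}$, the algebraic identity $\ell\beta-\alpha-\ell=0$ collapses all powers of $R$ and leaves
\[
R^{-\ell}f(\eta_{k+1})\le c_0\,2^{(k+1)\alpha}\,w_k^{\beta},\qquad\text{hence}\qquad w_{k+1}\le c_0\,2^{(k+1)\alpha}\,w_k^{\beta}+\widetilde S\,2^{-\ell(k+1)}.
\]

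Next I would prove by induction on $k$ that $w_k\le M\,2^{-\ell k}$ with the dilated choice $M:=2w_0=2R^{-\ell}\bigl(f(0)+\widetilde S R^{\ell}\bigr)$; the base case is trivial, and if $w_0=0$ (equivalently $f(0)=0=\widetilde S$) the statement is immediate since $f$ is non-negative and non-increasing. For the induction step, $\alpha-\ell\beta=-\ell$ turns the recursion into $w_{k+1}\le 2^{-\ell(k+1)}\bigl(c_0\,2^{\alpha+\ell}M^{\beta}+\widetilde S\bigr)$, so it suffices to verify the self-map estimate $c_0\,2^{\alpha+\ell}M^{\beta}+\widetilde S\le M$. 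Since $M=2w_0$ and $\alpha+\ell+\beta=\beta(\ell+1)$, the left-hand side equals $c_0\,2^{\beta(\ell+1)}w_0^{\beta}+\widetilde S$, and because $\widetilde S\le w_0$ this is at most $2w_0=M$ as soon as $c_0\,2^{\beta(\ell+1)}w_0^{\beta-1}\le1$. But dividing \eqref{stamglem:h22} by $R^{\ell}$ and recalling $\tfrac{\beta(\alpha+\beta-1)}{\beta-1}=\beta(\ell+1)$, that hypothesis reads exactly $\bigl(2^{\beta(\ell+1)}c_0\bigr)^{1/(\beta-1)}w_0\le1$, i.e. $c_0\,2^{\beta(\ell+1)}w_0^{\beta-1}\le1$. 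So the induction closes, $w_k\le 2w_0\,2^{-\ell k}\to0$, whence $f(\eta_k)\le R^{\ell}w_k\to0$; as $f$ is non-increasing with $\eta_k\le R$, we get $0\le f(R)\le\inf_k f(\eta_k)=0$, that is $f(R)=0$.

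The step I expect to demand the most care is the exponent bookkeeping, and this is really the whole content of the proof. The exponent $\tfrac{\alpha}{\beta-1}$ on $(R-\eta)$ in \eqref{stamglem:h21} is precisely the one for which the inhomogeneous correction $\widetilde S\,2^{-\ell(k+1)}$ decays with the same dyadic ratio $2^{-\ell}$ as the homogeneous iterate $c_0\,2^{(k+1)\alpha}w_k^{\beta}$, so that the two can live inside one geometric envelope; and the constant $2^{\beta(\alpha+\beta-1)/(\beta-1)}$ in the smallness hypothesis \eqref{stamglem:h22} is calibrated so that $M=2w_0$ is a genuine self-map of the recursion, with the slack $M-\widetilde S\ge w_0$ absorbing the additive term $\widetilde S$. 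An alternative route would be to mimic the proof of \cref{L-1}(i) directly, with the running quantity $f(\eta_k)+\widetilde S(R-\eta_k)^{\ell}$ in place of $f(\eta_k)$, but the estimate sketched above seems the shortest.
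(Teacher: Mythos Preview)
Your argument is correct. The dyadic sequence $\eta_k=R(1-2^{-k})$ together with the normalised iterate $w_k=R^{-\ell}\bigl(f(\eta_k)+\widetilde S(R-\eta_k)^\ell\bigr)$ is exactly the right packaging: the identity $\ell\beta-\alpha=\ell$ makes the recursion scale-free, and the choice $M=2w_0$ leaves precisely the slack $w_0\ge\widetilde S$ needed to absorb the additive term, so the induction closes under the stated smallness condition. The exponent bookkeeping ($\alpha+\ell+\beta=\beta(\ell+1)$, $\ell+1=\tfrac{\alpha+\beta-1}{\beta-1}$) is all verified.

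There is nothing to compare against in the paper itself: the authors state this lemma in the appendix without proof, citing it from \cite{DalPassoGiacomelliGruen,GiacomelliGruen,MR1642176}. Your proof is the standard dyadic Stampacchia iteration that appears (in essentially this form) in those references, so you have reconstructed the intended argument.
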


\subsection{Fractional Leibnitz rule and tail estimates}

For fractional derivatives, the classical Leibnitz rule does not hold. However, by \cite[Theorem 1.2]{Dong} (see also \cite[Theorem 1.1]{VazquezHungSire}), we are able to estimate the remainder term as follows.

\begin{lemma}[Fractional Leibnitz rule]\label{lm:leibnitz}
Assume that $\beta \in (0,2)$. Let
$$
R_{\beta}(u,v) = (-\Delta)^{\beta}(u\,v) - u (-\Delta)^{\beta} v - v (-\Delta)^{\beta}u  .
$$
We have that
\begin{equation}\label{l-1}
\|  R_{\beta}(u,v) \|_{L^2(\Omega)}
\leqslant   C  \|   u  \|_{L^2(\Omega)} \| (-\Delta)^{\beta } v \|_{L^{\infty}(\Omega)}
\end{equation}
where $\Omega  \subset \mathbb{R}^d$ is a bounded domain,  $ \mathcal B[u] =0$ on $\partial \Omega$, and $\mathcal B[v]  = 0 $ on $\partial \Omega$. Here the boundary operator $\mathcal B[v]$ can be chosen as
\begin{align*}
\mathcal B[v] = v \quad \text{ or } \quad \mathcal B[v] =  \nabla v \cdot \textbf{n}.
\end{align*}
\end{lemma}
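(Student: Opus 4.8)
The plan is to obtain \eqref{l-1} from the commutator (``remainder'') estimates for the spectral fractional Laplacian available in the literature: \cite[Theorem 1.2]{Dong} gives exactly this bound on a bounded smooth domain when $\mathcal B[v]=v$, while \cite[Theorem 1.1]{VazquezHungSire} covers the case $\mathcal B[v]=\nabla v\cdot\textbf{n}$ together with the whole-space fractional Laplacian. Accordingly, my first step is to check that the operator $R_\beta$ defined here agrees, up to the normalization of $(-\Delta)^\beta$ fixed in \eqref{rlap}, with the remainder term appearing in those references, and that their standing hypotheses — a bounded domain whose boundary is smooth enough for the associated heat semigroup to satisfy Gaussian upper bounds and gradient estimates up to $\partial\Omega$ — hold in the present setting.

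For a more self-contained proof I would instead argue via Balakrishnan's subordination formula. For $\beta\in(0,1)$ write $(-\Delta)^\beta w = c_\beta\int_0^{\infty}\bigl(w-e^{t\Delta}w\bigr)\,t^{-1-\beta}\,\dd t$ with $c_\beta=|\Gamma(-\beta)|^{-1}>0$, where $e^{t\Delta}$ is the heat semigroup associated with the boundary condition $\mathcal B$ and $p_t(x,y)\ge 0$ is its kernel, with $\int_\Omega p_t(x,y)\,\dd y=1$ in the Neumann case (which is the one used in the body of the paper). Substituting this representation into the three terms of $R_\beta(u,v)$, the first-order contributions cancel and one is left with the ``carr\'e du champ'' identity
\[
R_\beta(u,v)(x) = -\,c_\beta\int_0^{\infty} t^{-1-\beta}\int_\Omega p_t(x,y)\,\bigl(u(y)-u(x)\bigr)\bigl(v(y)-v(x)\bigr)\,\dd y\,\dd t .
\]
Next I would use that $(-\Delta)^\beta v\in L^\infty(\Omega)$ forces $v$ to be bounded and globally H\"older of order $\min\{2\beta,1\}$ (up to a logarithm when $2\beta=1$), with seminorm controlled by $\|(-\Delta)^\beta v\|_{L^\infty(\Omega)}$ through elliptic/Schauder estimates on the bounded domain; inserting the bound $|v(y)-v(x)|\lesssim\|(-\Delta)^\beta v\|_{L^\infty(\Omega)}\,|x-y|^{\min\{2\beta,1\}}$, splitting the $t$-integral at $t=1$, and using the Gaussian bound $p_t(x,y)\lesssim t^{-d/2}e^{-c|x-y|^2/t}$ in combination with Minkowski's integral inequality and a convolution (Young) estimate, one controls $\|R_\beta(u,v)\|_{L^2(\Omega)}$ by $C\,\|u\|_{L^2(\Omega)}\|(-\Delta)^\beta v\|_{L^\infty(\Omega)}$. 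For $\beta\in[1,2)$ I would reduce to the range $(0,1)$ by writing $(-\Delta)^\beta=(-\Delta)(-\Delta)^{\beta-1}$ and iterating the second-order Leibniz identity $(-\Delta)(fg)=f(-\Delta)g+g(-\Delta)f-2\nabla f\cdot\nabla g$, collecting and controlling the gradient cross terms.

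The main obstacle will be the small-time part $\int_0^1 t^{-1-\beta}(\cdots)\,\dd t$: the weight $t^{-1-\beta}$ is not integrable at $t=0$, so one has to extract exactly the extra decay carried by the bilinear difference $\bigl(u(y)-u(x)\bigr)\bigl(v(y)-v(x)\bigr)$ after averaging against $p_t$, and doing this rigorously needs heat-kernel H\"older and gradient estimates valid \emph{up to the boundary} — which is where the smoothness of $\partial\Omega$ and the admissible choices of $\mathcal B$ enter — plus, for $\beta\ge1$, the absorption of terms of the form $\nabla u\cdot\nabla v$ absent when $\beta<1$. Since both issues are settled in \cite{Dong,VazquezHungSire} in exactly the generality we need, the cleanest route in the end is to invoke those results once the normalization of $(-\Delta)^\beta$ and the boundary operator have been matched.
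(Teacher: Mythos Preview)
Your approach matches the paper's exactly: the paper does not supply a proof of this lemma but simply cites \cite[Theorem 1.2]{Dong} and \cite[Theorem 1.1]{VazquezHungSire} in the sentence preceding the statement, which is precisely what you propose as the ``cleanest route.'' Your additional self-contained sketch via the subordination formula and the carr\'e du champ identity goes beyond what the paper offers and is a reasonable outline, though the paper itself makes no attempt at it.
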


\begin{lemma}[Interpolation inequality]\label{lem-fr-3}
Let $R>0$ and $\Omega=B_R(0)$. For $S\in (0,R)$ we define $\Omega(S)\coloneqq \{x\in \R^d: S<|x|<R\}=\Omega\setminus \overline{B_S(0)}$.
For $\delta\in(0,R-S)$, let $\psi_{S,\delta}\in C^\infty(\overline\Omega)$ satisfying
\cref{e-9} and \cref{psiest}.
For any $\alpha \in (0, \frac{1+s}{2})$ there exists a constant $C>0$  such that  the following inequality holds:
\begin{equation}\label{main}
\|  \psi_{S,\delta} (-\Delta)^{\alpha} u \|_{L^2(\Omega )} \leqslant     \| (-\Delta)^{\frac{s+1}{2}} (u \psi_{S,\delta} ) \|_{L^2(\Omega )}^{\theta} \|   u  \psi_{S,\delta}  \|_{L^2(\Omega(S))}^{1-\theta} + \frac{C }{\delta^{2\alpha}}   \| u  \|_{L^2(\Omega )}
\end{equation}
for any $ u \in H^{s+1}_N(\Omega)$,
 where $ \theta = \frac{2\alpha}{s+1}$.
\end{lemma}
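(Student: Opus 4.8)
The plan is to reduce the localized estimate \cref{main} to the global interpolation inequality \cref{dong-01} applied to the product $u\psi_{S,\delta}$, paying the price of a commutator term that is controlled by the bounds \cref{psiest} on $\psi_{S,\delta}$ and the fractional Leibnitz rule \cref{lm:leibnitz}. First I would write, using the notation $R_\beta$ introduced in \cref{LEE},
\[
\psi_{S,\delta}(-\Delta)^\alpha u = (-\Delta)^\alpha(u\psi_{S,\delta}) - u(-\Delta)^\alpha\psi_{S,\delta} - R_\alpha(u,\psi_{S,\delta}),
\]
so that, by the triangle inequality in $L^2(\Omega)$,
\[
\|\psi_{S,\delta}(-\Delta)^\alpha u\|_{L^2(\Omega)}
\leqslant \|(-\Delta)^\alpha(u\psi_{S,\delta})\|_{L^2(\Omega)}
+ \|u(-\Delta)^\alpha\psi_{S,\delta}\|_{L^2(\Omega)}
+ \|R_\alpha(u,\psi_{S,\delta})\|_{L^2(\Omega)}.
\]
The first term is the ``main'' term: I would bound it by \cref{dong-01} (or equivalently \cref{prop:interp} together with \cref{prop:2r}) with $\beta=\alpha\in(0,\frac{1+s}{2})$ and $v=u\psi_{S,\delta}$, which gives exactly $\|(-\Delta)^{\frac{s+1}{2}}(u\psi_{S,\delta})\|_{L^2}^{\theta}\|u\psi_{S,\delta}\|_{L^2}^{1-\theta}$ with $\theta=\frac{2\alpha}{s+1}$; since $\psi_{S,\delta}$ is supported in $\overline{\Omega(S)}$, the low-order factor can be written as $\|u\psi_{S,\delta}\|_{L^2(\Omega(S))}$, matching the statement.

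For the two remainder terms I would use the scaling estimates \cref{psiest}. Strictly speaking \cref{psiest} records $|(-\Delta)^{s+1}\psi_{S,\delta}|\leqslant C\sigma^{-2(s+1)}$ and the gradient bound; the same construction of $\psi_{S,\delta}$ yields (after rescaling a fixed cut-off profile) $\|(-\Delta)^\alpha\psi_{S,\delta}\|_{L^\infty(\Omega)}\leqslant C\delta^{-2\alpha}$ for every $\alpha\in(0,\frac{1+s}{2})$, and this is the fact I would invoke. Then $\|u(-\Delta)^\alpha\psi_{S,\delta}\|_{L^2(\Omega)}\leqslant C\delta^{-2\alpha}\|u\|_{L^2(\Omega)}$ directly, and by the fractional Leibnitz rule \cref{lm:leibnitz} (applicable since $\nabla u\cdot\mathbf n=0$ on $\partial\Omega$ for $u\in H^{s+1}_N(\Omega)$ and $\nabla\psi_{S,\delta}\cdot\mathbf n=0$ as $\psi_{S,\delta}\in C^\infty(\overline\Omega)$ is constant near $\partial\Omega$)
\[
\|R_\alpha(u,\psi_{S,\delta})\|_{L^2(\Omega)}\leqslant C\|u\|_{L^2(\Omega)}\|(-\Delta)^\alpha\psi_{S,\delta}\|_{L^\infty(\Omega)}\leqslant C\delta^{-2\alpha}\|u\|_{L^2(\Omega)}.
\]
Summing the three contributions gives \cref{main}.

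The main obstacle I anticipate is purely bookkeeping around the cut-off: one must be sure that $(-\Delta)^\alpha$ of the spectrally-defined cut-off $\psi_{S,\delta}$ really does obey the scaling bound $\|(-\Delta)^\alpha\psi_{S,\delta}\|_{L^\infty}\lesssim\delta^{-2\alpha}$ uniformly in $S$ — for the spectral Neumann Laplacian this is not completely automatic (it is not a pure convolution operator), so I would either cite the tail/commutator estimates referenced in \cref{app:lemmas} or note that $\psi_{S,\delta}$ can be taken to equal a rescaled fixed profile away from $\partial\Omega$ and handle the (harmless, smooth, $\delta$-independent) boundary layer separately. A secondary subtlety is checking that the hypotheses of \cref{lm:leibnitz} are genuinely met by $u$ and $\psi_{S,\delta}$; since $\psi_{S,\delta}$ is smooth and locally constant near $\partial\Omega$ and $u\in H^{s+1}_N(\Omega)$, both the $\mathcal B[\cdot]=\nabla(\cdot)\cdot\mathbf n$ boundary conditions hold, so the rule applies. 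Once these points are settled the rest is the elementary triangle-inequality argument above.
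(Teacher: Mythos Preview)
Your proposal is correct and follows essentially the same approach as the paper: decompose $\psi_{S,\delta}(-\Delta)^\alpha u$ via the fractional Leibnitz identity, apply the global interpolation \cref{dong-01} to $(-\Delta)^\alpha(u\psi_{S,\delta})$, and control the two remainder terms using \cref{lm:leibnitz} together with the scaling bound $\|(-\Delta)^\alpha\psi_{S,\delta}\|_{L^\infty}\leqslant C\delta^{-2\alpha}$. The paper's proof is essentially the three-line version of what you wrote, and it likewise takes the $\delta^{-2\alpha}$ bound on $(-\Delta)^\alpha\psi_{S,\delta}$ for granted without further justification.
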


\begin{proof}[Proof of \cref{lem-fr-3}]

We apply \cref{dong-01} to
 $v = u \, \psi_{S,\delta}   \in  H^{s+1}_N(\Omega)$.
As
$$
 (-\Delta)^{\alpha}(u \, \psi_{S,\delta} ) = \psi_{S,\delta}  (-\Delta)^{\alpha} u  + u  (-\Delta)^{\alpha} \psi_{S,\delta}  + R_{\alpha}(u ,\psi_{S,\delta} )
$$
then, using \cref{dong-01}  and \cref{l-1}
with $\beta = \alpha$, we have
\begin{align*}
\| \psi_{S,\delta}  (-\Delta)^{\alpha}  u   \|_{L^2(\Omega)} &\leqslant  
 \| (-\Delta)^{\alpha} ( u\,\psi_{S,\delta}   ) \|_{L^2(\Omega)}
+ \| u (-\Delta)^{\alpha}  \psi_{S,\delta}    \|_{L^2(\Omega)} + \| R_{\alpha}(u ,\psi_{S,\delta} ) \|_{L^2(\Omega  )} \\ & \leqslant  
\| (-\Delta)^{\frac{s+1}{2}} ( u\,\psi_{S,\delta}   ) \|_{L^2(\Omega)}^{\theta} \|   u\,\psi_{S,\delta}  \|_{L^2(\Omega)}^{1-\theta} +
   \| u   (-\Delta)^{\alpha}  \psi_{S,\delta}    \|_{L^2(\Omega  )} + \frac{C}{\delta^{2\alpha}} \| u   \|_{L^2(\Omega  )} \\ &\leqslant  
\| (-\Delta)^{\frac{s+1}{2}} ( u\,\psi_{S,\delta}   ) \|_{L^2(\Omega )}^{\theta} \|   u \psi_{S,\delta}  \|_{L^2(\Omega )}^{1-\theta} +
\frac{C}{\delta^{2\alpha}} \| u   \|_{L^2(\Omega  )},
\end{align*}
which yields \cref{main}.
\end{proof}

Finally, we present a tail estimate for the fractional Laplacian.

\begin{lemma}[Tail estimate for the fractional Laplacian]\label{lem-fr}

Let $R>0$ and $\Omega=B_R(0)$. For $S\in (0,R)$ we define $\Omega(S)\coloneqq \{x\in \R^d: S<|x|<R\}=\Omega\setminus \overline{B_S(0)}$.
Let $\alpha \in (0,1)$, and
$\psi \in H_N^{2\alpha}( \Omega )$ be such that $\rm{supp}\,\psi \subseteq \Omega(S)$.
Then there exists a constant $C > 0$ depending on $\alpha$ such that for any $\delta \in (0, R-S)$,  the following estimate holds:
\begin{equation}\label{fr-1}
\| (-\Delta)^{\alpha} \psi  \|_{L^2(\Omega)} \leqslant   \| (-\Delta)^{\alpha} \psi  \|_{L^2(\Omega(S+\delta))} +  \frac{C}{\delta^{2\alpha }}\|\psi  \|_{L^2(\Omega)} .
\end{equation}
\end{lemma}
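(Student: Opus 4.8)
\textbf{Plan for the proof of \cref{lem-fr}.}
The plan is to decompose the $L^2$-norm of $(-\Delta)^\alpha \psi$ over the full domain $\Omega$ into the contribution coming from the ``near'' region $\Omega(S+\delta)$ and the contribution coming from the ``far'' region $B_{S+\delta}(0)$, and then to estimate the far contribution using the fact that $\psi$ vanishes on $B_S(0)$, so that on the far region $(-\Delta)^\alpha \psi$ is controlled entirely by the nonlocal tail of the fractional Laplacian acting on a function supported at distance at least $\delta$ away. Concretely, since $\|(-\Delta)^\alpha\psi\|_{L^2(\Omega)}^2 = \|(-\Delta)^\alpha\psi\|_{L^2(\Omega(S+\delta))}^2 + \|(-\Delta)^\alpha\psi\|_{L^2(B_{S+\delta}(0))}^2$, and $\sqrt{a^2+b^2}\le a+b$ for $a,b\ge 0$, it suffices to prove the tail bound
\[
\|(-\Delta)^\alpha \psi\|_{L^2(B_{S+\delta}(0))} \leqslant \frac{C}{\delta^{2\alpha}} \|\psi\|_{L^2(\Omega)}.
\]

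For the tail bound I would use the heat-semigroup (or, equivalently, the Balakrishnan / singular-integral) representation of the spectral fractional Laplacian: for $\alpha\in(0,1)$,
\[
(-\Delta)^\alpha \psi(x) = \frac{1}{\Gamma(-\alpha)}\int_0^{+\infty} \bigl(e^{t\Delta}\psi(x) - \psi(x)\bigr)\,\frac{\d t}{t^{1+\alpha}},
\]
where $e^{t\Delta}$ is the Neumann heat semigroup on $\Omega$ with kernel $p_t(x,y)$. For $x\in B_{S+\delta}(0)$ we have $\psi(x)=0$ (since $\delta<R-S$ forces $x$ to lie in $B_S(0)$... — more carefully, one first reduces to the worst case by noting that only points $x$ with $\dist(x,\supp\psi)\ge 0$ matter and the estimate is only nontrivial where $x$ is within the support region; the clean statement is that for $x\in B_S(0)$ one has $\psi(x)=0$ and for $x$ in the thin annulus $S<|x|<S+\delta$ one still has $\dist(x,\supp\psi)$ possibly zero, so one actually splits at $S+\delta/2$ or simply absorbs the annulus into the $\Omega(S+\delta)$ term). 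Taking $x$ with $\dist(x,\supp\psi)\ge c\delta$, the term $\psi(x)$ drops out and $e^{t\Delta}\psi(x) = \int_{\supp\psi} p_t(x,y)\psi(y)\,\d y$; using Gaussian upper bounds $p_t(x,y)\le C t^{-d/2}e^{-|x-y|^2/(Ct)}$ (valid for the Neumann heat kernel on a smooth bounded domain) together with $|x-y|\ge c\delta$, one estimates $\|e^{t\Delta}\psi\|_{L^2(B_{S+\delta/2}(0))}\le C e^{-c\delta^2/t}\|\psi\|_{L^2(\Omega)}$ for small $t$ and $\le C\|\psi\|_{L^2(\Omega)}$ for all $t$, and then the $\d t/t^{1+\alpha}$ integral converges and scales like $\delta^{-2\alpha}$: indeed $\int_0^\infty \min(1,e^{-c\delta^2/t})\,t^{-1-\alpha}\,\d t \sim \delta^{-2\alpha}$ by the change of variables $t=\delta^2\tau$. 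A cleaner bookkeeping option, avoiding heat-kernel technicalities, is to expand in the eigenbasis: $(-\Delta)^\alpha\psi = \sum_k \lambda_k^\alpha c_k(\psi)\phi_k$, and to write $\lambda_k^\alpha = \frac{1}{\Gamma(-\alpha)}\int_0^\infty (e^{-\lambda_k t}-1)t^{-1-\alpha}\,\d t$; this reduces everything to the $L^2$-boundedness and the off-diagonal decay of $e^{t\Delta}$, which is the same input.

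The main obstacle I anticipate is making the ``$\psi$ vanishes near $x$'' reduction rigorous across the thin annulus $S<|x|<S+\delta$ where $\supp\psi$ and the evaluation point can touch: one must be careful that the region where the tail estimate genuinely holds is $B_{S+c\delta}(0)$ for some fixed $c\in(0,1)$, and then re-absorb the leftover annular shell $\{S+c\delta<|x|<S+\delta\}$ into the term $\|(-\Delta)^\alpha\psi\|_{L^2(\Omega(S+\delta))}$ — which is legitimate since that annular shell is contained in $\Omega(S)$ but \emph{not} in $\Omega(S+\delta)$, so a small loss in the splitting point has to be tracked, or alternatively one proves the statement with $\delta$ replaced by $\delta/2$ throughout and renames constants. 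The other technical point is justifying the Balakrishnan integral representation for the spectral fractional Laplacian with Neumann conditions and the Gaussian bound for the Neumann heat kernel on a ball; both are standard (see \cite{StingaTorrea}) but should be cited. Once the representation and the off-diagonal/Gaussian decay are in hand, the scaling computation $\int_0^\infty \min(1,e^{-c\delta^2/t})t^{-1-\alpha}\,\d t = C\delta^{-2\alpha}$ is immediate and yields the claimed constant $C/\delta^{2\alpha}$.
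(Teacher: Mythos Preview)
Your approach is essentially the same as the paper's: decompose the $L^2$-norm over $\Omega(S+\delta)$ and its complement, then bound the far piece via the heat-semigroup/Balakrishnan representation of the spectral fractional Laplacian together with Gaussian bounds on the Neumann heat kernel. The only cosmetic difference is order of integration: the paper performs the $t$-integral first to pass to the singular-integral form $\int_\Omega |x-y|^{-d-2\alpha}|\psi(y)-\psi(x)|\,\d y$ and then applies Cauchy--Schwarz in $y$, whereas you propose bounding $\|e^{t\Delta}\psi\|_{L^2(\text{far})}$ for each $t$ and then integrating $\int_0^\infty \min(1,e^{-c\delta^2/t})t^{-1-\alpha}\,\d t\sim \delta^{-2\alpha}$. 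Both routes yield the same estimate.

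Your worry about the thin annulus $\{S<|x|<S+\delta\}$, where $\psi(x)$ need not vanish and the singular kernel is not uniformly separated from its support, is legitimate; the paper's proof is in fact rather terse on this point and simply asserts the final bound $C\delta^{-4\alpha}\|\psi\|_{L^2}^2$ for the double integral without isolating the annulus. Your proposed fix (prove the estimate with $\delta$ replaced by $\delta/2$ and rename constants, or split at an intermediate radius) is the standard way to make this rigorous and is exactly what is needed.
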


\begin{proof}[Proof of \cref{lem-fr}]
Note that
$$
\| (-\Delta)^{\alpha} \psi  \|^2_{L^2(\Omega)} = \| (-\Delta)^{\alpha} \psi  \|^2_{L^2(\Omega(S+\delta))} +
\| (-\Delta)^{\alpha} \psi  \|^2_{L^2(\Omega \setminus \Omega(S +\delta))}.
$$
So, we need to estimate $\| (-\Delta)^{\alpha} \psi  \|_{L^2(\Omega \setminus \Omega(S+\delta))}$.
By the semigroup representation formula for the spectral fractional Laplacian (see \cite{AbatangeloValdinoci}),
$$
(-\Delta)^{\alpha} \psi(x) = \frac{1}{\Gamma(-\alpha)} \int_0^{+\infty} { (e^{t\Delta}\psi(x) - \psi(x) ) \frac{1}{t^{1+\alpha}}}\, \d t,
$$
where $e^{t\Delta}\psi(x) = \int_{\Omega} { K(x,y,t) \psi(y)\,\d y}$
and $K(x,y,t) \simeq t^{-\frac{d}{2}} e^{-\frac{|x-y|^2}{4t}}$ is the kernel of the heat operator on $\Omega$.
Then,
$$
(-\Delta)^{\alpha} \psi(x) = \frac{1}{\Gamma(-\alpha)} \int_0^{+\infty} { t^{-1-\alpha} \int_{\Omega } { K(x,y,t) (\psi(y) -\psi(x))\,\d y}  \,\d t }
$$
which yields
\begin{align*}
|(-\Delta)^{\alpha} \psi(x) | &\leqslant   C \int_0^{+\infty} { t^{-\frac{d+2}{2}-\alpha} \int_{\Omega } { e^{-\frac{|x - y|^2}{4t}} |\psi(y) -\psi(x)| \,\d y} \, \d t } \\ &=
C \int_{\Omega } {  \frac{|\psi(y) -\psi(x)|}{|x-y|^{d +2\alpha}} \int_{0 }^{+\infty} { z^{\alpha + \frac{d-2}{2} }   e^{-z} \,\d z} \, \d y } \\ &\leqslant  
C  \| \psi \|_{L^2(\Omega)} \Bigl( \int_{\Omega(S)} {  \frac{dy}{|x-y|^{2d +4\alpha}} } \Bigr)^{\frac{1}{2}}
+ C |\psi(x)| \int_{\Omega } {  \frac{ \dd y }{|x-y|^{d +2\alpha}} }  ,
	\end{align*}
where $C  > 0$ depends on $\alpha$ and $d$. Hence,
\begin{align*}
\| (-\Delta)^{\alpha} \psi  \|^2_{L^2(\Omega \setminus \Omega(S +\delta))} &\leqslant  
C  \| \psi \|^2_{L^2(\Omega)} \int_{\Omega \setminus \Omega(S+\delta)} {  \Bigl( \int_{\Omega } {  \frac{\d y}{|x-y|^{2d +4\alpha}} } \Bigr) \,\d x}
\\ &\quad +   C \int_{\Omega \setminus \Omega(S+\delta)} { \psi^2(x)   \left( \int_{\Omega } {\frac{\d y}{|x-y|^{d +2\alpha}} } \right)^2 \,\d x} \leqslant    \frac{C}{\delta^{4\alpha }}  \| \psi \|^2_{L^2(\Omega)},
\end{align*}
which yields \cref{fr-1}.
\end{proof}

\vspace{0.5cm}
\section*{Acknowledgments}
NDN is a member of the Gruppo Nazionale per l’Analisi Matematica, la Probabilità e le loro Applicazioni (GNAMPA) of the Istituto Nazionale di Alta Matematica (INdAM).  He has been supported by the Swiss State Secretariat for Education, Research and Innovation (SERI) under contract number MB22.00034 through the project TENSE.

SL is a member of GNAMPA--INDAM.
SL acknowledges the support of the MIUR--PRIN grant no.~202244A7YL ``Gradient Flows and Non-Smooth Geometric Structures with Applications to Optimization and Machine Learning''.

AS is a member of GNAMPA--INDAM and acknowledges the support of the 2024 Project ``Local and nonlocal
variational models in complex materials''. Moreover AS acknowledges support from PRIN 2022 (Project no. 2022J4FYNJ), funded by MUR, Italy, and the European Union -- Next Generation EU, Mission 4 Component 1 CUP F53D23002760006.

RT was supported by NRFU project no.~2023.03/0074 ``Infinite-dimensional evolutionary equations with multivalued and stochastic dynamics'' and by a grant from the Simons Foundation (Award no.~00010584, Presidential Discretionary Ukraine Support Grants).

\vspace{0.5cm}
\textbf{Data Availability statement.} This manuscript has no associated data.

\textbf{Compliance with ethical standards.} The authors of this manuscript declare that they have no conflict of interest.


\bibliographystyle{abbrv}
\bibliography{FTFE-refs.bib}
\vfill
\end{document}